\newtheorem{theorem}{Theorem}[section]
\newtheorem{lemma}[theorem]{Lemma}
\newtheorem{proposition}[theorem]{Proposition}
\newtheorem{corollary}[theorem]{Corollary}
\newtheorem{definition}[theorem]{Definition}
\newtheorem{example}[theorem]{Example}
\theoremstyle{remark}
\newcommand{\bld}{\boldsymbol}
\newcommand{\Res}{\text{Res}}
\newcommand{\Div}{\operatorname{Div}}
\newcommand{\cC}{{\mathcal C}}
\newcommand{\cE}{{\mathcal E}}
\newcommand{\CC}{{\mathbb C}}
\newcommand{\NN}{{\mathbb N}}
\newcommand{\QQ}{{\mathbb Q}}
\newcommand{\RR}{{\mathbb R}}
\newcommand{\ZZ}{{\mathbb Z}}
\renewcommand{\Re}{\operatorname{Re}}
\renewcommand{\Im}{\operatorname{Im}}
\renewcommand{\a}{\alpha}
\renewcommand{\b}{\beta}
\newcommand{\eps}{\epsilon}
\renewcommand{\o}{\omega}
\title{On the definition of higher Gamma functions}
\subjclass[2010]{Primary: 33B15. Secondary: 30D10, 30D15.}
\keywords{Gamma function, Barnes Gamma function, Mellin Gamma functions, Jackson $q$-Gamma function, multiple gamma functions}
\author[R. P\'{e}rez-Marco]{Ricardo P\'{e}rez-Marco}
\address{CNRS, IMJ-PRG, Universit\'e Paris Cit\'e, Paris, France}
\email{ricardo.perez.marco@gmail.com}
\begin{document}

\begin{abstract}
We generalize our previous new definition of Euler Gamma function to higher Gamma functions. With this unified approach, 
we characterize Barnes higher Gamma functions, Mellin Gamma functions, Barnes multiple Gamma functions, Jackson $q$-Gamma function, 
and Nishizawa higher $q$-Gamma functions. The method presented extends to more general 
functional equations. This generalization reveals 
the multiplicative group structure of solutions of the functional equation that appears as a cocycle equation.
We also generalize Barnes hierarchy of higher Gamma function and multiple Gamma functions. 
With this new definition, Barnes-Hurwitz zeta functions 
are no longer needed in the definition of Barnes multiple Gamma functions. 
This simplifies the classical definition, without the 
necessary analytic preliminaries about the meromorphic extension of Barnes-Hurwitz zeta functions,
and allows to define a larger class of Gamma functions. For some algebraic independence conditions 
on the parameters, we do have uniqueness 
of the solutions. This implies the identification of a subclass of our multiple 
Gamma functions with classical Barnes multiple Gamma functions.
\end{abstract}

\maketitle

\section{Introduction}

The first result is a new characterization and  definition of Euler Gamma function that was 
first presented in the article \cite{PM1}
dedicated to Euler Gamma function. 
We can develop in a natural way the classical formulas in the theory from this new definition. 
We denote the right half 
complex plane by $\CC_+=\{s\in \CC ; \Re s >0\}$.

\begin{theorem}\label{thm:main-Euler-gamma}
There is one and only one finite order meromorphic function $\Gamma(s)$, $s\in \CC$, 
without zeros nor poles in $\CC_+$, with $\Gamma(1)=1$, 
$\Gamma'(1)\in \RR$, that satisfies the functional equation
 $$
 \Gamma(s+1)=s\, \Gamma(s)
 $$
\end{theorem}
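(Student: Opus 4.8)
The plan is to establish existence and uniqueness separately. For existence I would simply exhibit the classical Euler Gamma function, defined through the Weierstrass product
\[
\frac{1}{\Gamma(s)} = s\, e^{\gamma s} \prod_{n\ge 1} \left(1+\frac{s}{n}\right) e^{-s/n},
\]
where $\gamma$ is the Euler--Mascheroni constant. This product shows that $1/\Gamma$ is entire of order $1$ with zeros exactly at $0,-1,-2,\dots$; hence $\Gamma$ is meromorphic of finite order, has no zeros anywhere and poles only at the non-positive integers, so in particular it is holomorphic and non-vanishing on $\CC_+$. One checks directly that $\Gamma(1)=1$, that $\Gamma'(1)=-\gamma\in\RR$, and that the functional equation $\Gamma(s+1)=s\,\Gamma(s)$ holds, so all the hypotheses are met.

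For uniqueness, suppose $\Gamma_1$ and $\Gamma_2$ both satisfy the hypotheses, and set $f=\Gamma_1/\Gamma_2$. The key first observation is that $f$ is periodic of period $1$: from the common functional equation,
\[
f(s+1)=\frac{\Gamma_1(s+1)}{\Gamma_2(s+1)}=\frac{s\,\Gamma_1(s)}{s\,\Gamma_2(s)}=f(s).
\]
Since $\Gamma_1,\Gamma_2$ have neither zeros nor poles in $\CC_+$, the function $f$ is holomorphic and non-vanishing there; and because its zero/pole set is invariant under $s\mapsto s+1$, any hypothetical zero or pole $s_0$ with $\Re s_0\le 0$ would produce, via a large enough integer translate $s_0+n\in\CC_+$, a zero or pole in $\CC_+$, a contradiction. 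Hence $f$ is entire and everywhere non-vanishing.

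Next I would invoke the finite-order hypothesis. The quotient of two finite-order meromorphic functions is again of finite order (e.g. by subadditivity of the Nevanlinna characteristic together with $T(r,1/g)=T(r,g)+O(1)$), so $f$ is an entire, non-vanishing function of finite order, and therefore $f=e^{P}$ for some polynomial $P$ by Hadamard factorization. Periodicity gives $P(s+1)-P(s)\in 2\pi i\,\ZZ$, a constant, which forces $\deg P\le 1$; writing $P(s)=2\pi i k\,s+b$ with $k\in\ZZ$, the normalization $f(1)=\Gamma_1(1)/\Gamma_2(1)=1$ yields $e^{b}=1$, so $f(s)=e^{2\pi i k s}$. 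Finally the condition $\Gamma'(1)\in\RR$ enters: since $f'/f=\Gamma_1'/\Gamma_1-\Gamma_2'/\Gamma_2$ and $\Gamma_i(1)=1$, we get $f'(1)/f(1)=\Gamma_1'(1)-\Gamma_2'(1)\in\RR$, while from $f(s)=e^{2\pi i k s}$ this same quantity equals $2\pi i k$, which is purely imaginary. Hence $k=0$, so $f\equiv 1$ and $\Gamma_1=\Gamma_2$.

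The main obstacle is the passage to $f=e^{P}$ with $P$ a genuine polynomial: this is exactly where the finite-order assumption is indispensable. Without it, any function $e^{g}$ with $g$ entire and $1$-periodic (for instance built from $e^{2\pi i s}$) would satisfy every other hypothesis, so finiteness of order is precisely what collapses the infinite-dimensional space of periodic multipliers down to the one-parameter family $\{e^{2\pi i k s}\}_{k\in\ZZ}$, after which the two normalizations $\Gamma(1)=1$ and $\Gamma'(1)\in\RR$ remove the last remaining ambiguity.
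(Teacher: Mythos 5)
Your proof is correct, and the uniqueness half is essentially identical to the paper's: form the quotient, observe it is $1$-periodic, entire and non-vanishing, of finite order, apply Hadamard to write it as $e^{P}$, use periodicity to force $P(s)=2\pi i k s+b$, and then use $f(1)=1$ and $\Gamma'(1)\in\RR$ to kill $b$ and $k$ respectively (you are in fact slightly more careful than the paper in justifying that the logarithmic derivative of the quotient at $1$ is real). The only real divergence is in the existence step: you simply exhibit the classical Weierstrass product with the Euler--Mascheroni constant and cite its known properties, whereas the paper constructs a solution from scratch --- it takes the bare canonical product $g$ with the prescribed divisor, observes that $g(s+1)/(s\,g(s))=e^{P(s)}$ for some polynomial $P$, solves the difference equation $Q(s+1)-Q(s)=P(s)$, and sets $\Gamma=e^{-Q}g$. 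The paper's version buys self-containedness and, more importantly, is exactly the template that is generalized to arbitrary LLD functions $f$ in the rest of the article (prescribe the divisor from the functional equation, build a Weierstrass product, correct by an exponential of a polynomial solving a difference equation); your version is shorter but leans on prior knowledge of Euler's function, which would not be available in the generalized setting.
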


\medskip

\begin{definition}[Euler Gamma function]
The only solution to the above conditions is the Euler Gamma function.
\end{definition}

Without the condition $\Gamma'(1)\in \RR$ we don't have uniqueness, but we have the following result:

\begin{theorem}\label{thm:main-Euler-gamma_bis}
Let $f$ be a finite order meromorphic function in $\CC$, 
without zeros nor poles in $\CC_+$,  and satisfies the functional equation
 $$
 f(s+1)=s\, f(s)  \ ,
 $$
 then there exists $a\in \ZZ$ and $b\in \CC$ such that 
 $$
 f(s)=e^{2\pi i a s+b} \Gamma (s) \ .
 $$
 Moreover, if $f(1)=1$ then we have
 $$
 f(s)=e^{2\pi i a s} \Gamma (s) \ .
 $$
 
\end{theorem}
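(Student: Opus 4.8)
The plan is to divide $f$ by the Euler Gamma function $\Gamma$ (which exists and has the stated properties by Theorem \ref{thm:main-Euler-gamma}) and to show that the quotient is an elementary exponential. Set $g(s)=f(s)/\Gamma(s)$. Since both $f$ and $\Gamma$ satisfy the functional equation $h(s+1)=s\,h(s)$, the quotient satisfies
$$
g(s+1)=\frac{f(s+1)}{\Gamma(s+1)}=\frac{s\,f(s)}{s\,\Gamma(s)}=g(s),
$$
so $g$ is meromorphic and $1$-periodic. The first thing I would record is that $g$ has neither zeros nor poles anywhere: in $\CC_+$ this is immediate, since by hypothesis neither $f$ nor $\Gamma$ has zeros or poles there; and for an arbitrary $s_0\in\CC$ one chooses $n\in\ZZ$ with $\Re(s_0+n)>0$ and uses $g(s_0)=g(s_0+n)$ to transport holomorphy and non-vanishing from $\CC_+$ to $s_0$. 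Hence $g$ is an entire, nowhere-vanishing function.

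Next I would bring in the finite order hypothesis. The order of a meromorphic function is subadditive under products and quotients through the Nevanlinna characteristic, since $T(r,1/\Gamma)=T(r,\Gamma)+O(1)$ gives $T(r,f/\Gamma)\le T(r,f)+T(r,\Gamma)+O(1)$; as $\Gamma$ has order $1$ and $f$ has finite order, $g$ has finite order as well. A zero-free entire function of finite order is, by the Hadamard factorization theorem, of the form $g(s)=e^{P(s)}$ with $P$ a polynomial whose degree is bounded by the order of $g$.

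The final step is to pin down $P$ using the periodicity of $g$. From $g(s+1)=g(s)$ we obtain $e^{P(s+1)-P(s)}=1$, so the entire function $P(s+1)-P(s)$ takes values in the discrete set $2\pi i\,\ZZ$ and is therefore a constant $2\pi i\,a$ with $a\in\ZZ$. If $P$ had degree $d\ge 2$, then $P(s+1)-P(s)$ would have degree $d-1\ge 1$, contradicting constancy; hence $d\le 1$ and $P(s)=2\pi i\,a\,s+b$ for some $b\in\CC$. This yields $f(s)=e^{2\pi i a s+b}\,\Gamma(s)$. For the \emph{moreover} statement, evaluating at $s=1$ and using $\Gamma(1)=1$ gives $1=f(1)=e^{2\pi i a+b}=e^{b}$, so $b\in 2\pi i\,\ZZ$ and the factor $e^{b}$ disappears, leaving $f(s)=e^{2\pi i a s}\,\Gamma(s)$.

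I expect the only genuinely delicate point to be the passage from ``finite order'' to ``$P$ is a polynomial'': it requires first establishing that the quotient $g$ is still of finite order (a Nevanlinna-characteristic estimate) and then invoking Hadamard's factorization. Everything else—the periodicity computation, the removal of zeros and poles by translation, and the degree argument forcing $P$ to be linear—is a short formal deduction.
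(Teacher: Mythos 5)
Your proposal is correct and follows essentially the same route as the paper: the paper likewise forms the quotient of the two solutions, notes it is a zero-free entire function of finite order, writes it as $e^{A(s)}$ with $A$ a polynomial, and uses $\ZZ$-periodicity to force $A(s)=2\pi i a s+b$. You merely supply in more detail two points the paper leaves implicit (that the quotient has empty divisor everywhere by periodicity, and that it retains finite order), which is fine.
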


The proof can be found in \cite{PM1}, but we reproduce it here as a preliminary result for the generalizations 
which are the core of this article. We refer to the companion article \cite{PM1} for 
the various definitions of Euler Gamma function and the historical development of the subject of Eulerian integrals. 
We strongly encourage the reader to study first \cite{PM1}, and also consult 
the bibliographic notes in \cite{PM2} before studying this article.

In the proof we use the elementary theory of entire function that can be found in classical textbooks as \cite{Boa} 
(this is recalled in the Appendix of \cite{PM1}). 
 
\begin{proof}

We first prove existence and then uniqueness.

\smallskip

\textbf{Existence:}
If we have a  function satisfying the previous conditions then its divisor must be contained in $\CC-\CC_+$,
and the functional equation implies that it has no zeros and only simple poles at the non-positive integers. 
We can directly construct such a meromorphic function $g$
with such divisor, for example,
\begin{equation}\label{eq:W-solution}
g(s)=s^{-1}\prod_{n=1}^{+\infty} \left ( 1+\frac{s}{n}\right )^{-1}e^{s/n} 
\end{equation}
which converges since $\sum_{n\geq 1} n^{-2} <+\infty$, and is a meromorphic function of finite order.
Now, we have that the meromorphic function $\frac{g(s+1)}{s g(s)}$ has no zeros nor poles and it is also of finite 
order (as ratio of finite order meromorphic functions), hence there exists a polynomial $P$ such that 
$$
\frac{g(s+1)}{s g(s)}=e^{P(s)} \ .
$$
Consider a polynomial $Q$ such that 
\begin{equation}\label{eq:W-solution2}
\Delta Q(s) =Q(s+1)-Q(s) =P(s)
\end{equation}
The polynomial $Q$ is uniquely determined from $P$ up to a constant, hence we can freely choose $Q(1)$.
By the product definition, $g$ is real and positive for positive real numbers, and we choose
$Q(1)=\log g(1)$. Now we have that  $\Gamma (s) = e^{-Q(s)}g(s)$ satisfies the 
functional equation, $\Gamma(1)=1$, and $\Gamma'(1) \in \RR$ because $\Gamma$ is real analytic in the 
positive reals. Thus we proved existence.

\smallskip

\textbf{Uniqueness:}
Consider a second solution $f$. Let $F(s) = \Gamma(s)/f(s)$. Then $F$ is an entire function of finite order 
without zeros, hence we can write $F(s)=\exp{A(s)}$
for some polynomial $A$. Moreover, the functional equation shows that $F$ is $\ZZ$-periodic. Hence, there exists 
an integer $a\in \ZZ$,  such that for any $s\in \CC$,
$$
A(s+1)=A(s)+2\pi i a \ .
$$
It follows that $A(s)=2\pi i a s+b$ for some $b\in \CC$. Since $F(1)=1$, we have $e^b=1$. Since $F'(1) \in \RR$, and 
$F'(1)=F'(1)/F(1)=2 \pi i a  \in \RR$ we have $a=0$, thus $F$ is constant, $F\equiv 1$ and $f=\Gamma$.
\end{proof}

\textbf{Remarks.}

\begin{itemize}
 \item Using the functional equation we can weaken the conditions 
 and request only that the function is meromorphic only on $\CC_+$ with the corresponding 
 finite order growth. We can also 
 assume that it is only defined on a cone containing the positive real axes, a  
 vertical strip of width larger than $1$, or in general with any region $\Omega$ which is a 
transitive region for the integer translations and $f$ satisfies the finite order growth condition in 
$\Omega$ when $s\to +\infty$. 
\begin{proposition}\label{prop:domain}
Let $\Omega\subset \CC$ a domain such that for any $s\in \CC$ there exists an integer $n(s)\in \ZZ$ 
such that $s+n(s)\in \Omega$, and $|n(s)|\leq C |s|^d$, for some constants $C, d>0$ depending 
only on $\Omega$. Then any function $\tilde \Gamma$ satisfying a finite order estimate in $\Omega$ and the functional 
equation $\tilde \Gamma(s+1)=s\tilde \Gamma(s)$ when $s, s+1\in \Omega$, 
extends to a finite order meromorphic function on $\CC$.
\end{proposition}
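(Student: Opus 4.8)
The plan is to reduce everything to the \emph{locally periodic} ratio $R=\tilde\Gamma/\Gamma$, where $\Gamma$ is the Euler Gamma function produced above, and to extend $R$ rather than $\tilde\Gamma$ itself. Since $\Gamma$ satisfies the same functional equation as $\tilde\Gamma$, on the overlap $\Omega\cap(\Omega-1)$ we get $R(w+1)=\tilde\Gamma(w+1)/\Gamma(w+1)=w\tilde\Gamma(w)/(w\Gamma(w))=R(w)$; thus $R$ is meromorphic of finite order on $\Omega$ and is $1$-periodic wherever the periodicity makes sense. The proposition then amounts to showing that $R$ descends to a single meromorphic function $\hat R$ on the cylinder $\CC^{*}=\CC/\ZZ$ through $q(s)=e^{2\pi i s}$, after which I set $\hat\Gamma(s)=\Gamma(s)\,\hat R(e^{2\pi i s})$ and check that this meromorphic function on $\CC$ restricts to $\tilde\Gamma$ on $\Omega$ (take $n(s)=0$), satisfies $\hat\Gamma(s+1)=s\hat\Gamma(s)$ (since $\Gamma(s+1)=s\Gamma(s)$ and $\hat R\circ q$ is automatically $1$-periodic), and is of finite order.

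The first and main step is this descent, equivalently $R(w)=R(w+k)$ whenever $w,w+k\in\Omega$ and $k\in\ZZ$. This is precisely what is needed for $\hat\Gamma(s)=\Gamma(s)\,\tilde\Gamma(s+n(s))/\Gamma(s+n(s))$ to be independent of the admissible choice of $n(s)$. For the regions actually used this is immediate: a vertical strip of width $>1$, or a convex cone containing $\RR_{>0}$, is convex, so if $w,w+k\in\Omega$ then every intermediate translate $w+j$, $0\le j\le k$, lies in $\Omega$ as well, and one chains the elementary relation $R(w+j)=R(w+j+1)$ through $D_1=\Omega\cap(\Omega-1)$. For a general domain the intermediate translates need not lie in $\Omega$, and one argues by connectedness: on each connected component of $D_k=\Omega\cap(\Omega-k)$ the meromorphic function $R(w)-R(w+k)$ either vanishes identically or only on a discrete set, so it suffices to verify its vanishing at a single point of each component, which one reaches from the chained region $\bigcap_{j=0}^{k}(\Omega-j)$ using that $\Omega$ is connected and $q(\Omega)=\CC^{*}$. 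I expect this single-valuedness to be the crux of the argument, because it is exactly here that the hypothesis that $\Omega$ be a \emph{domain} is indispensable: a disconnected transitive region would carry inconsistent periodic data and admit no global extension.

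Once $\hat R$ is available on $\CC^{*}$, the function $\hat\Gamma=\Gamma\cdot(\hat R\circ q)$ is meromorphic on all of $\CC$, and the only remaining point is the finite order estimate, which is where the quantitative hypothesis $|n(s)|\le C|s|^{d}$ enters. Writing, off the poles, $\log|\hat\Gamma(s)|=\log|\Gamma(s)|+\log|\tilde\Gamma(s+n(s))|-\log|\Gamma(s+n(s))|$, I bound the two $\Gamma$-terms by Stirling (order $1$), of size $O(|s+n(s)|^{1+\epsilon})$, and the middle term by the finite order $\rho$ of $\tilde\Gamma$ in $\Omega$, of size $O(|s+n(s)|^{\rho})$. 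Since $|s+n(s)|\le|s|+C|s|^{d}$, all three are $O(|s|^{D})$ with $D$ a constant multiple of $\max(1,\rho,d)$, so $\log|\hat\Gamma|$ grows at most polynomially; equivalently $\hat R\circ q$ is of finite order, and $\hat\Gamma$, a product of finite order meromorphic functions, is of finite order.

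A short complementary remark controls the pole-counting function. The poles of $\hat\Gamma$ are the non-positive integers coming from $\Gamma$ together with the images under $s\mapsto s-n(s)$ of the poles of $\tilde\Gamma$ in $\Omega$; both sets have polynomially bounded density in $|s|\le r$ thanks to the bound on $n(s)$, so the counting function is $N(r)=O(r^{D})$. Combined with the proximity bound above this gives a Nevanlinna characteristic $T(r)=O(r^{D})$, confirming that the extension $\hat\Gamma$ is genuinely a finite order meromorphic function on $\CC$ extending $\tilde\Gamma$.
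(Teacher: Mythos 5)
Your proposal is correct and is essentially the paper's proof run in the opposite order: where the paper first extends $\tilde\Gamma$ to $\CC$ by iterating the functional equation and then divides by the global reference solution $e^{-Q}g$ to obtain a $\ZZ$-periodic function of controlled growth, you divide by $\Gamma$ first and descend the periodic ratio to the cylinder, but the three ingredients --- a global solution of the same functional equation, the $\ZZ$-periodicity of the quotient, and the bound $|n(s)|\leq C|s|^d$ to convert the estimate on $\Omega$ into a finite order estimate on $\CC$ --- are identical. The one place you go beyond the paper is in explicitly isolating the well-definedness of the extension (the consistency $R(w)=R(w+k)$ for $w,w+k\in\Omega$), which the paper's proof silently subsumes in the phrase ``iterating the functional equation''; your connectedness argument for that step is still only sketched for a general domain, but it is no less complete than the paper's own treatment.
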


\begin{proof}
Let $\tilde \Gamma$ be such a function. Let $\Omega$ be corresponding region.
Iterating the functional equation we get that $\tilde \Gamma$ extends meromorphically to the whole complex plane. 
Then, if $g$ is the Weierstrass product (\ref{eq:W-solution}) and $Q$ a polynomial given by (\ref{eq:W-solution2}), 
the function $h(s)=\tilde \Gamma(s)/(e^{-Q(s)}g(s))$ 
is a $\ZZ$-periodic entire function. Since $1/(e^{-Q}g)$ is an entire function of finite order, we have in $\Omega$ 
the finite order estimate for $h$. Using that $|n(s)|\leq C|s|^d$, we get that $h$ is of 
finite order, hence $\tilde \Gamma$ is meromorphic and of finite order
in the plane.
\end{proof}

\item Assuming $\Gamma$ real-analytic we get $\Gamma'(1)\in \RR$, but this last  
condition is much weaker. Also, as it follows from the proof, we can 
replace this condition by $\Gamma(a)\in  \RR$ for some $a\in \RR-\ZZ$, or only request that $\Gamma$ is asymptotically real, 
$\lim_{x\in \RR, x\to +\infty} \Im \Gamma (x)=0$. 
Without the condition $\Gamma'(1)\in \RR$
the proof shows that $\Gamma$ is uniquely determined up to a factor $e^{2\pi i k s}$ with $k\in \ZZ$. 
\end{itemize}

\section{General definition.}

\subsection{General definition and characterization.}

We first need to recall the notion of \textit{``Left Located Divisor''} (LLD) function that is useful in the theory 
of Poisson-Newton formula for finite order meromorphic functions (\cite{MPM1}, \cite{MPM2}).

\begin{definition}[LLD function]
A meromorphic function $f$ in $\CC$ is in the class LLD (Left Located Divisor) if  $f$ has no zeros nor poles 
in $\CC_+$, \textit{i.e.} $\Div(f) \subset \CC-\CC_+$. 

The function is in the class CLD (Cone Located Divisor) 
if its divisor is contained in a closed cone in the closed half plane $\CC-\CC_+$.
\end{definition}

The following Theorem is a generalization of Theorem \ref{thm:main-Euler-gamma} which results for the simple LLD function $f(s)=s$.

\begin{theorem}\label{thm:general1}
Let $f$ be a real analytic LLD meromorphic function in $\CC$ of finite order. 
There exists a unique function $\Gamma^f$, the Gamma function  associated to $f$, satisfying the following properties:
\begin{enumerate}
  \item $\Gamma^f(1)=1$ , \label{GC1}
  \item $\Gamma^f(s+1)=f(s)\Gamma^f(s)$ ,\label{GC2}
  \item $\Gamma^f$ is a meromorphic function of finite order, \label{GC3}
  \item $\Gamma^f$ is LLD,\label{GC4}
  \item $\Gamma^f$ is real analytic.\label{GC5}
\end{enumerate} 
If $f$ is CLD then $\Gamma^f$ is CLD.
\end{theorem}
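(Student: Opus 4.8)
The plan is to follow the architecture of the proof of Theorem~\ref{thm:main-Euler-gamma}, which corresponds to the special case $f(s)=s$, the single zero at the origin being replaced by the whole divisor of $f$. The first step is to determine the divisor that any solution is forced to have. Reading the functional equation (\ref{GC2}) at the level of divisors, and writing $\tau^{-n}\Div(f)$ for the translation of $\Div(f)$ by $n$ units to the left, the relation $\Gamma^f(s+1)=f(s)\Gamma^f(s)$ becomes $\tau^{-1}\Div(\Gamma^f)-\Div(\Gamma^f)=\Div(f)$. Imposing the LLD condition (\ref{GC4}), i.e. $\Div(\Gamma^f)\subset \CC-\CC_+$, this difference equation for divisors has the unique solution
$$
\Div(\Gamma^f)=-\sum_{n\geq 0}\tau^{-n}\Div(f)\ ,
$$
so that each zero (resp. pole) of $f$ at a point $z_0$ with $\Re z_0\leq 0$ produces poles (resp. zeros) of $\Gamma^f$ at $z_0,z_0-1,z_0-2,\dots$, all lying in $\CC-\CC_+$. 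This divisor is unique among admissible ones, since the difference of two solutions of the divisor equation would be invariant under all integer translations and contained in $\CC-\CC_+$, hence empty.

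For existence I would first build a finite order, real analytic, LLD meromorphic function $g$ with exactly this divisor by a Hadamard canonical product; the only genuine analytic point is its convergence, discussed below. Since $\Div(g)$ is arranged so that $\Div(g(s+1))=\Div(f)+\Div(g)$, the quotient
$$
h(s)=\frac{g(s+1)}{f(s)\,g(s)}
$$
is entire, zero-free and of finite order, whence $h=e^{P}$ for a polynomial $P$. As in the Euler case I would then choose a polynomial $Q$ with $\Delta Q=Q(s+1)-Q(s)=P$ (determined up to an additive constant) and set $\Gamma^f=e^{-Q}g$; a direct computation gives $\Gamma^f(s+1)/\Gamma^f(s)=e^{-P}f\,e^{P}=f$, so (\ref{GC2}) holds, while (\ref{GC3}) and (\ref{GC4}) are immediate from the construction. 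Because $\Div(f)$ is conjugation-symmetric ($f$ being real analytic) and the construction respects this symmetry, $g$, $P$, $Q$ and hence $\Gamma^f$ are real analytic, giving (\ref{GC5}); finally the free constant in $Q$ is fixed by $Q(1)=\log g(1)$, legitimate since $g(1)>0$ because $1\in\CC_+$, which enforces the normalization (\ref{GC1}).

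For uniqueness I would take two solutions $\Gamma_1,\Gamma_2$ and set $F=\Gamma_1/\Gamma_2$. Both share the forced divisor, so $F$ is entire, zero-free and of finite order, hence $F=e^{A}$ with $A$ a polynomial; the functional equation makes $F$ $\ZZ$-periodic, forcing $A(s)=2\pi i a\, s+b$ with $a\in\ZZ$. The normalization $F(1)=1$ gives $e^{b}=1$, and real analyticity of $F$ forces $F'(1)=2\pi i a\in\RR$, hence $a=0$ and $F\equiv 1$. This is verbatim the argument used for the Euler Gamma function.

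For the CLD statement I would argue geometrically: if $\Div(f)$ lies in a closed cone $C\subset\CC-\CC_+$, then every leftward translate $\tau^{-n}C$ stays in $\CC-\CC_+$, and $\bigcup_{n\geq 0}\tau^{-n}C$ is contained in a closed cone of $\CC-\CC_+$, for instance the convex cone generated by $C$ and the negative real axis; since $\Div(\Gamma^f)\subset\bigcup_{n\geq0}\tau^{-n}C$, the function $\Gamma^f$ is CLD. The step I expect to be the main obstacle is the finite-order claim for $g$, namely that $-\sum_{n\geq0}\tau^{-n}\Div(f)$ has finite exponent of convergence, and this is exactly where the finite order hypothesis on $f$ is essential. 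Estimating the counting function, a point of $\tau^{-n}\Div(f)$ in $\{|s|<r\}$ forces $n<r$ and comes from a zero or pole of $f$ in a disk of radius $O(r)$, so that the counting function of $\Div(\Gamma^f)$ is bounded by $r\cdot n_f(O(r))=O(r^{\rho+1+\epsilon})$ when $f$ has order $\rho$. Thus $\Div(\Gamma^f)$ has exponent of convergence at most $\rho+1$ and the canonical product $g$ is indeed of finite order, which is the only place where the hypotheses enter in a non-formal way.
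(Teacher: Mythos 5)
Your proposal is correct and follows essentially the same architecture as the paper's proof: forced determination of the divisor from the functional equation plus the LLD condition, a Weierstrass canonical product with that divisor, correction by $e^{-Q}$ where $\Delta Q=P$, and the identical $\ZZ$-periodicity/polynomial argument for uniqueness. The only cosmetic difference is that you control the exponent of convergence of $-\sum_{n\geq 0}\tau^{-n}\Div(f)$ by a counting-function bound $r\cdot n_f(O(r))$, whereas the paper proves the same shift of exponent by $1$ by estimating $||\Div(\Gamma^f)||_{\alpha+1}$ directly (splitting off a cone for the non-CLD case); both arguments deliver the same conclusion.
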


\begin{proof}

The proof follows the same lines as the proof of Theorem \ref{thm:main-Euler-gamma}. First, we prove that the functional 
equation (\ref{GC2}) determines the divisor of $\Gamma^f$, then we construct a solution using a Weierstrass 
product, and finally we prove uniqueness.

\medskip
$\bullet$ \textbf{Determination of the divisor.}
\medskip

As usual, we denote the divisor of $f$ as the formal sum
$$
\Div (f)=\sum_\rho n_\rho (f) . (\rho)
$$
where the sum is extended over $\rho\in \CC$ and $n_\rho (f)$ is the multiplicity of the zero if $\rho$ is a zero, the negative multiplicity of the 
pole if $\rho$ is a pole, or $n_\rho (f)=0$ if $\rho$ is neither a zero or pole. 
Hence the function $n_\rho$ is an almost zero function. 
A divisor is said to be LLD, resp. CLD, if it is the 
divisor of a LLD, resp. CLD, function.

\begin{lemma}\label{lemma:divisor_determination}
If $\Gamma^f$ is LLD and satisfies the functional equation (\ref{GC2}), then the divisor of $\Gamma^f$ is
$$
\Div (\Gamma^f) =-\sum_{\rho, k\geq 0} n_\rho(f)  \cdot (\rho -k)
$$
where 
$$
\Div (f) = \sum_{\rho} n_\rho(f) \cdot (\rho)
$$
and 
\begin{equation}\label{eq:multiplicities}
n_\rho (\Gamma^f) = -\sum_{k=0}^{|\rho | } n_{\rho +k}(f)
\end{equation}

If the divisor $\Div (f)$ is LLD, resp. CLD, then $\Div (\Gamma^f)$ is LLD, resp. CLD.
\end{lemma}

We allow ourselves the slight abuse of notation $\Div(\Gamma^f)$ to denote the divisor of a 
potential solution $\Gamma^f$ when we have not yet proved the existence of $\Gamma^f$.

\begin{proof}
For any $\rho \in \CC$, the functional equation gives
$$
n_{\rho +1} (\Gamma^f) = n_\rho(f) +n_\rho (\Gamma^f) \ ,
$$
or equivalently
$$
n_{\rho } (\Gamma^f) = -n_\rho(f) +n_{\rho +1} (\Gamma^f) \ ,
$$
Hence, by induction, we have for $m\geq 1$,
$$
n_\rho (\Gamma^f) = -\sum_{k=0}^{m-1} n_{\rho +k}(f) +n_{\rho+m}(\Gamma^f) 
$$
and since $\Gamma^f$ is LLD, for $m\geq |\rho| \geq |\Re \rho| =-\Re \rho$ 
we have  $n_{\rho+m}(\Gamma^f) =0$, so
\begin{equation}
n_\rho (\Gamma^f) = -\sum_{k=0}^{|\rho |-1} n_{\rho +k}(f) =-\sum_{k=0}^{+\infty } n_{\rho +k}(f)    
\end{equation}
and we get, with $\rho'=\rho+k$,
$$
\Div(\Gamma^f) =-\sum_{k\geq 0} \sum_{\rho } n_{\rho +k} \cdot (\rho) = -\sum_{\rho' , k\geq 0} n_{\rho'} \cdot (\rho'-k) 
$$
which gives the formula for $\Div (\Gamma^f)$.
\end{proof}

\newpage

$\bullet$ \textbf{Convergence exponent of a divisor.}
\medskip

\begin{definition}
The divisor of $f$  has exponent of convergence $\alpha >0$ if
$$
||\Div(f)||_\alpha=\sum_{\rho\not=0} |n_\rho(f)| . |\rho |^{-\alpha} <+\infty  \ .
$$
\end{definition}

We recall that a meromorphic function of finite order has a divisor with some  finite 
exponent of convergence. More precisely, 
if $o(f)<+\infty $ is the order of $f$, then for any $\eps>0$, $\alpha=o(f)+\eps$ 
is an exponent of convergence of its divisor.

\begin{proposition}\label{prop:conv_exponent1}
If  $\Div(f)$ is LLD of finite order, then $\Div(\Gamma^f)$ given by 
Lemma \ref{lemma:divisor_determination} is LLD and of finite order.

More precisely, if $\Div(f)$ is of oder $\alpha >0$ then $\Div(\Gamma^f)$ is of order $\alpha +1$.
\end{proposition}

We prove first a more precise result when $f$ is in the class CLD.

\begin{proposition}\label{prop:conv_exponent2}
If $\alpha >0$ is an exponent of convergence for $f$ in the class CLD, then $\Gamma^f$ is CLD and
$\alpha +1$ is an exponent of convergence for $\Gamma^f$. More precisely, there exists a constant $C>0$ 
such that 
$$
||\Div(\Gamma^f)||_{\alpha+1} \leq  C \, ||\Div(f)||_{\alpha+1} + \frac{C}{\alpha} \, ||\Div(f)||_{\alpha}
$$

\end{proposition}

\begin{proof}
Lemma \ref{lemma:divisor_determination} proves that $\Gamma^f$ is CLD if we start with $f$ CLD.
Now, if $f$ is CLD, there is a constant $C>0$ such that for any $k\geq 1$ and 
$\rho$ in the left cone (the constant $C$ depends on the cone)
$$
|\rho -k|^{-1}\leq C (|\rho| +k)^{-1} \ .
$$
Then we have, with $\rho'=\rho+k$,
\begin{align*}
||\Div(\Gamma^f)||_\beta &=\sum_{\rho \not= 0} |n_\rho (\Gamma^f)| . |\rho|^{-\beta} \\ 
&=  \sum_{\rho \not= 0} \sum_{k=0}^{|\rho|} |n_{\rho +k}(f)|. |\rho|^{-\beta} \\
&= \sum_{\rho' \notin \NN} |n_{\rho'}(f)| \sum_{k=0}^{+\infty} |\rho'-k|^{-\beta} \\
&\leq C  \sum_{\rho' \not= 0} |n_{\rho'}(f)| \sum_{k=0}^{+\infty} (|\rho'| +k)^{-\beta} \\
&= C  \sum_{\rho' \not= 0} |n_{\rho'}(f)| . |\rho'|^{-\beta} + C \sum_{\rho' \not= 0} |n_{\rho'}(f)| \int_0^{+\infty} (|\rho'| +x)^{-\beta} \, dx \\
&\leq  C  \sum_{\rho' \not= 0} |n_{\rho'}(f)| . |\rho'|^{-\beta} + \frac{C}{\beta -1} \sum_{\rho' \not= 0} |n_{\rho'}(f)| . |\rho'|^{-\beta+1}\\
& = C \, ||\Div(f)||_\beta + \frac{C}{\beta -1} \, ||\Div(f)||_{\beta-1}
\end{align*}
hence, for $\beta =\alpha +1$ the sum is converging and we prove the Lemma.
\end{proof}

Now we prove Proposition \ref{prop:conv_exponent1}.

\begin{proof}
We consider the part of the divisor of $\Gamma^f$ contained in the cone $\cC_0=\{ |\Im \rho | < -\Re \rho \}$. The same argument as before proves that
\begin{equation}\label{eq:intermediate}
\sum_{\rho \in \cC_0} |n_\rho (\Gamma^f)|. |\rho|^{-(\alpha +1)} < +\infty 
\end{equation}
This can be seen by observing that $\Div(\Gamma^f)$ is generated by families $\rho, \rho-1, \rho-2,\ldots$ where $\rho \in \Div(f)$. The part of this sequence
contained in the cone $\cC_0$ is of the form $\rho_0, \rho_0-1, \rho_0-2,\ldots$, where $\rho_0=\rho-k_0$ for some integer $k_0=k_0(\rho)\geq 0$. Now, since $\Re \rho <0$, we have
$|\rho_0|^{-\alpha} =|\rho-k_0|^{-\alpha} \leq |\rho|^{-\alpha}$. Hence, if we consider the meromorphic function $f_0$ with divisor generated by the $(\rho_0)$, we do have
\begin{equation*}
 ||\Div(f_0)||_{\alpha} \leq ||\Div(f)||_{\alpha}
\end{equation*}
and we use Proposition \ref{prop:conv_exponent2} to prove (\ref{eq:intermediate}).

Now, we have to care about the convergence of the remaining sum and prove
\begin{equation}
\sum_{\rho \notin \cC_0} |n_\rho (\Gamma^f)|. |\rho|^{-(\alpha +1)} < +\infty 
\end{equation}
Any such divisor point of $\Gamma^f$ is generated by a zero or pole $\rho \notin \cC_0$ of $f$, 
that generates $\rho, \rho-1, \rho-2,\ldots$. There are at most $|\rho|$ such points associated to each $\rho$. Hence we can bound the sum
\begin{equation*}
 \sum_{\rho \notin \cC_0} |n_\rho (\Gamma^f)| |\rho|^{-(\alpha +1)} \leq \sum_\rho |n_\rho (f)| |\rho|.|\rho|^{-\alpha } \leq ||\Div(f)||_{\alpha+1}
\end{equation*}
and the result follows.

\end{proof}

\medskip
$\bullet$ \textbf{Existence of $\Gamma^f$.}
\medskip

Since $f$ has finite order, the divisor of $f$ has a finite convergence exponent. Hence, 
$\Div(\Gamma^f)$ determined by Lemma \ref{lemma:divisor_determination} has a finite exponent of convergence. 
Let $d\geq 1$ be an integer that is an exponent of convergence for this divisor (the case $d=0$ only occurs for a finite divisor). 
We consider the Weierstrass product,
$$
g(s)=s^{-n_0(f)} \prod_{\rho \not= 0} E_d(s/\rho)^{n_\rho (\Gamma^f)}
$$
where 
$$
E_d(x)= (1-x)\exp \left ( x+\frac{x^2}{2}+\ldots +\frac{x^d}{d} \right ) \ .
$$
Then $g$ has order $d$ and $\Div (g) =\Div(\Gamma^f)$. Therefore the meromorphic function
$$
\frac{g(s+1)}{f(s) g(s)}
$$
is of finite order and has no zeros nor poles. So, it is an entire function of finite order without zeros. 
Therefore, there exists a polynomial $\phi$ such that 

\begin{equation}\label{eqn:Gammaf2v2}
 \frac{g(s+1)}{f(s) g(s)}=e^{\phi(s)} 
 \end{equation}
There is a unique polynomial $\psi$ such that $\psi(0)=0$ and
\begin{equation}\label{eqn:Gammaf5v2}
 \psi(s+1)-\psi(s)=\phi(s).
\end{equation}
We can obtain $\psi$ directly by developing $\phi$ on the bases of falling factorial polynomials,
$s^{\underline k} =s(s-1)\ldots (s-k+1)$, that diagonalize the difference operator, $\Delta s^{\underline k} = k \, s^{\underline {k-1}}$, 
$$
\phi(s)=\sum_{k=0}^{n} \frac{a_k}{k!}\, s^{\underline k}
$$
then 
$$
\psi (s)=\sum_{k=0}^{+\infty} \frac{a_k}{(k+1)!}\, s^{\underline {k+1}} \ .
$$
Now, considering a constant $c$ such that $e^c=e^{-\phi(0)}g(1)^{-1}$ the meromorphic function
\begin{equation}\label{eqn:Gammaf3v2}
 \Gamma^f(s)=e^{\psi(s)+c} g(s),
 \end{equation}
satisfies $\Gamma^f(1)=1$ (condition (\ref{GC1})), the functional equation (\ref{GC2}) and 
all the other conditions in Theorem \ref{thm:general1}, and we have proved the existence.

\medskip

$\bullet$ \textbf{Uniqueness of $\Gamma^f$.}
\medskip

 Consider a second solution $G$. Let $F(s) = \Gamma^f(s)/G(s)$. Then $F$ is an entire function of finite order 
 without zeros, hence we can write $F(s)=\exp{A(s)}$
 for some polynomial $A$. Moreover, the functional equation shows that $F$ is $\ZZ$-periodic. Therefore, there exists 
 an integer $a\in \ZZ$,  such that for any $s\in \CC$,
 $$
 A(s+1)=A(s)+2\pi i a \ .
 $$
 It follows that $A(s)=2\pi i a s+b$ for some $b\in \CC$. Since $F(1)=1$, we have $e^b=1$. Since $F'(1) \in \RR$, and 
 $F'(1)=F'(1)/F(1)=2 \pi i a  \in \RR$ we have $a=0$, thus $F$ is constant, $F\equiv 1$ and $G=\Gamma^f$.

\end{proof}

\medskip

\subsection{Uniqueness results.}

It is interesting to note, following the argument for uniqueness, that we can drop the 
normalisation condition (\ref{GC1}) and the real-analyticity condition (\ref{GC5}) and we obtain the following Theorem
(this is similar to Theorem \ref{thm:main-Euler-gamma_bis}),

\begin{theorem}\label{thm:general1_bis}
Let $f$ be a LLD meromorphic function in $\CC$ of finite order. 
We consider a function $g$ satisfying
\begin{enumerate}
  \item $g(s+1)=f(s)g(s)$ ,\label{GC2_bis}
  \item $g$ is a meromorphic function of finite order, \label{GC3_bis}
  \item $g$ is LLD,\label{GC4_bis}
\end{enumerate}
Then there is always a solution $\Gamma^f(s)$ and any other solution $g$ is of the form 
$g(s)=e^{2\pi i a s+b} \Gamma^f(s)$ for some $a\in \ZZ$ and $b\in \CC$. If $f$ is CLD then the solutions are CLD.

Moreover, we have possible further normalizations:
\begin{itemize}
 \item  If we add the condition $g(1)=1$, or $g(k)=1$ for some $k\in \NN^*$, then all solutions are of the form $g(s)=e^{2\pi i a s} \Gamma^f(s)$.
 \item  If $f^{-1}$ has a pole at $0$ and we add the condition $\Res_{s=0} \, g=1$ then all solutions are of the form $g(s)=e^{2\pi i a s} \Gamma^f(s)$.
 \item  If $f$ has no zero at $0$ then we can add the condition $g(0)=1$ and all solutions are of the form $g(s)=e^{2\pi i a s} \Gamma^f(s)$.
 \item  If we add the conditions $g(1)=1$ and $g(\omega)\in \RR$ where $\omega \in \RR_+-\QQ$ then $g=\Gamma^f$ is unique.
 \item  If we add the conditions $g(1)=1$ and $g'(1)\in \RR$ then the solution  $g=\Gamma^f$ is unique.
 \item  If we add the hypothesis that $f$ is real analytic and the condition that $g$ is real analytic 
 then all solutions are of the form $g(s) =c .\Gamma^f (s)$ with $c\in \RR^*$.
\end{itemize}
\end{theorem}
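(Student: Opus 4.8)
The plan is to reduce everything to the ratio argument already used in the uniqueness part of Theorem \ref{thm:general1}, the one genuinely new input being that \emph{any} two LLD finite-order solutions have the \emph{same} divisor. First I would dispose of existence and the CLD assertion. The Weierstrass-product construction in the existence part of Theorem \ref{thm:general1} never uses real analyticity of $f$: that hypothesis was invoked only to secure condition (\ref{GC5}) and the reality of the derivative. Hence for any LLD finite-order $f$ the same construction produces a function $\Gamma^f$ with $\Gamma^f(1)=1$ satisfying (\ref{GC2_bis})--(\ref{GC4_bis}), which gives the existence claim. When $f$ is CLD, Lemma \ref{lemma:divisor_determination} together with Proposition \ref{prop:conv_exponent2} places $\Div(\Gamma^f)$ in a left cone with the correct convergence exponent, so the product can be taken CLD; multiplying by $e^{\psi+c}$ does not alter the divisor, so $\Gamma^f$ is CLD, and then so is every solution, since all solutions share this divisor.

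For the classification, given any solution $g$ I would set $F = g/\Gamma^f$. The key point is that $g$ and $\Gamma^f$ are both LLD and both satisfy (\ref{GC2_bis}), so by Lemma \ref{lemma:divisor_determination} they have identical divisors; therefore $F$ is entire and zero-free, and of finite order as a quotient of finite-order functions. Writing $F=e^{A}$ with $A$ a polynomial, the functional equation forces $F(s+1)=F(s)$, so $e^{A(s+1)-A(s)}\equiv 1$; as a continuous map into the discrete set $2\pi i\ZZ$ the difference $A(s+1)-A(s)$ is a constant $2\pi i a$ with $a\in\ZZ$, which forces $\deg A\le 1$ and hence $A(s)=2\pi i a s+b$ with $b\in\CC$. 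This is exactly the asserted form $g=e^{2\pi i a s+b}\Gamma^f$, and it exhibits the multiplicative group of twists $e^{2\pi i a s+b}$ parametrizing the solution set, which is the cocycle/group structure announced in the abstract.

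The listed normalizations then amount to pinning down $e^{b}$, and in the uniqueness cases also $a$. Evaluating $g=e^{2\pi i a s+b}\Gamma^f$ at $s=1$ and using $\Gamma^f(1)=1$ gives $g(1)=e^{b}$, so $g(1)=1$ forces $e^{b}=1$ and $g=e^{2\pi i a s}\Gamma^f$; evaluation at an integer $k$, at the residue when $f(0)=0$ (so $\Gamma^f$ has a pole at $0$ by (\ref{eq:multiplicities})), or at $0$ when $f(0)\neq 0$, each determines $e^{b}$ and removes the continuous freedom in the same way. To kill $a$ one uses real analyticity: if $f$ is real analytic then $\Gamma^f$ is real analytic by Theorem \ref{thm:general1}, so $\Gamma^f(\omega)\in\RR^{*}$ for real $\omega\in\CC_+$ and $(\Gamma^f)'(1)\in\RR$. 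From $g=e^{2\pi i a s}\Gamma^f$ one computes $g(\omega)=e^{2\pi i a\omega}\Gamma^f(\omega)$, so $g(\omega)\in\RR$ forces $2a\omega\in\ZZ$, which for irrational $\omega$ gives $a=0$; likewise $g'(1)=2\pi i a+(\Gamma^f)'(1)\in\RR$ forces $a=0$. Finally, for the real-analytic normalization, comparing $g(\bar s)$ with $\overline{g(s)}=e^{-2\pi i a\bar s+\bar b}\Gamma^f(\bar s)$ forces $a=0$ and $\Im b\in\pi\ZZ$, i.e. $c=e^{b}\in\RR^{*}$, giving $g=c\,\Gamma^f$.

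The main obstacle is precisely the observation underlying the ratio argument: that every LLD finite-order solution has one and the same divisor, so that the quotient of any two is a zero-free entire function of finite order. Once this is supplied by Lemma \ref{lemma:divisor_determination}, the $\ZZ$-periodicity argument and the case-by-case normalization computations are routine.
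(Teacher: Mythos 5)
Your proposal is correct and follows essentially the same route as the paper: existence via the Weierstrass product of Theorem \ref{thm:general1} (real analyticity being used there only for condition (\ref{GC5})), classification by writing the ratio of two solutions as a zero-free entire function of finite order that is $\ZZ$-periodic, hence of the form $e^{2\pi i a s+b}$, and then the case-by-case evaluation of the normalizations. One minor point: in the fourth and fifth bullets you invoke real analyticity of $f$ to ensure $\Gamma^f(\omega)\in\RR^*$ and $(\Gamma^f)'(1)\in\RR$, hypotheses those bullets do not grant (the paper's own write-up is equally loose here); the clean argument compares two admissible solutions $g_1,g_2$ directly, so that for instance $2\pi i(a_1-a_2)=g_1'(1)-g_2'(1)\in\RR$ forces $a_1=a_2$ with no reality assumption on $\Gamma^f$ itself.
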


\begin{proof}
With the same proof as before  we get the existence of a solution $\Gamma^f(s)$ such that $\Gamma^f(1)=1$.
and that any other solution is of the form $g(s)=e^{2\pi i a s+b} \Gamma^f(s)$ (note that the constant $0$ function is not LLD). 
For another solution $g$,  the condition 
$g(k)=1$ for $k\in \ZZ$ implies $e^b=1$, hence the first normalization result. For the second statement we observe that 
$$
\Res_{s=0} g= e^b \Res_{s=0} \Gamma^f 
$$
hence $e^b=1$. The third statement is similar to the first one observing that $g$ has no pole at $s=0$. The fourth normalization condition 
forces $b=0$ (first statement) and 
$$
e^{2\pi i a \omega}=1
$$
which implies $a=0$ because $\omega$ is irrational. For the fifth statement, for a second solution we have, from $g(1)=1$, $g(s)=e^{2\pi i a s} \Gamma^f(s)$. Differentiate
and set $s=1$, then  we get
$$
g'(1)=2\pi i a g(1) + \left (\Gamma^f\right )'(1) = 2 \pi i a +1 \in \RR
$$
hence $a=0$ and the solution is unique. For the last statement, $g(s) =e^{2\pi i a s+b} \Gamma^f(s)$ and $g$ and $\Gamma^f$ real analytic forces $a=0$,
and $e^b\in \RR^*$.
\end{proof}

\begin{example} \label{ex:normalizations1}
 
\normalfont{For $f(s)=s$ and the conditions $g$ real analytic and $g(1)=1$, this Theorem is just Theorem \ref{thm:main-Euler-gamma} 
and the only solution $g(s)=\Gamma (s)$ is Euler Gamma function.

Let $\omega\in \CC_+$ and consider $f(s)=\omega s$. Then $g(s)=\omega^s\Gamma(s)$ is a solution and all the  solutions are of the form 
\begin{equation*}
 g(s)=e^{2\pi i a s + s \log \omega +b}\Gamma(s) =
\end{equation*}
for  $a\in \ZZ$ and $b\in \CC$ (note that the choice of the branch of $\log \omega$ is irrelevant). 

If $\omega\in \CC^*$ and we request $g(1)=1$,  then 
all solutions are of the form, with $a\in \ZZ$,
\begin{equation}\label{eq:normalized_solution}
g(s)=e^{(s-1 )(2\pi i a + \log \omega ) } \, \Gamma(s)
\end{equation}
If $\omega \in \RR_+$, then $f(s)=\omega s$ is real analytic, and if we request $g$ to be real analytic and $g(1)=1$, 
then, taking the real branch of $\log \omega$, we must have $a=0$ and
\begin{equation}\label{eq:normalized_solution2}
g(s)=e^{(s-1) \log \omega } \, \Gamma(s)
\end{equation}
}
\end{example}

\begin{example}\label{ex:empty_divisor}
\normalfont{
Another particular example that is worth noting in this Theorem is when $f(s)=e^{P(s)}$. Then the solutions are of the form $g(s)=e^{Q_k(s)}$ where
$$
\Delta Q_k =P +2\pi i k 
$$
for $k\in \ZZ$, where $\Delta$ is the difference operator. This means that $Q_k(s)=Q_0(s)+2\pi i k s +b$, where $b\in \CC$. 
If we want solutions normalized such that $g(1)=1$ then  $e^b=1$ and $b \in 2\pi i \ZZ$.
}
\end{example}

\subsection{A continuity result.}

We prove the continuity of the operator $\Gamma : f\mapsto \Gamma^f$ for the appropriate natural topology.

\begin{theorem}\label{thm:continuity}
Let $(f_n)_{n\geq 0}$ be a sequence of meromorphic functions with uniformly bounded convergence exponent $\alpha >0$ and such that
$$
||\Div(f_n)||_\alpha= \sum_{\rho \in \Div (f_n), \rho\not= 0} |n_\rho| |\rho|^{-\alpha} \leq M < +\infty 
$$
for a uniform bound  $M >0$. We assume that the functions $(f_n)$ 
satisfy the hypothesis of Theorem \ref{thm:general1} and 
that $f_n \to f$ when $n\to +\infty$, where $f$ is a meromorphic function  
and the convergence is uniform 
on compact sets outside the poles of $f$. Then $f$ has convergence exponent bounded by $\alpha >0$,
$$
||\Div(f)||_\alpha \leq M < +\infty 
$$
and satisfies the hypothesis of Theorem \ref{thm:general1},  
and also we have, uniformly outside the poles,
$$
\lim_{n\to +\infty} \Gamma^{f_n} =\Gamma^f
$$
\end{theorem}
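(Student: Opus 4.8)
The plan is to establish in turn that the limit $f$ inherits the hypotheses of Theorem \ref{thm:general1} (so that $\Gamma^f$ is defined), that the associated divisors converge, and finally that the three factors in the Weierstrass construction of $\Gamma^{f_n}$ converge; assembling these gives $\Gamma^{f_n}\to\Gamma^f$. The engine throughout is Hurwitz's theorem on a fixed large disk, combined with the uniform bound $M$ to control everything of large modulus.

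\textbf{Inheritance of the hypotheses.} First I would fix $R>0$ and isolate the finitely many divisor points of $f$ in $\{|s|\le R\}$ by small circles avoiding $0$ and one another. By Hurwitz's theorem, for $n$ large the net multiplicity of $\Div(f_n)$ inside each circle equals the multiplicity of the divisor point of $f$ it encloses, and every divisor point of $f_n$ inside it lies close to that point. Using $|\sum_{\rho'}n_{\rho'}(f_n)|\le\sum_{\rho'}|n_{\rho'}(f_n)|$ and $|\rho'|^{-\alpha}\to|\rho|^{-\alpha}$, summing over these points and shrinking the radii gives $\sum_{0<|\rho|\le R}|n_\rho(f)|\,|\rho|^{-\alpha}\le M$; letting $R\to+\infty$ yields $||\Div(f)||_\alpha\le M$, so $f$ is of finite order. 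Since the $f_n$ are holomorphic and nonvanishing on $\CC_+$, Hurwitz forbids any zero or pole of $f$ in $\CC_+$, so $f$ is LLD; and passing to the limit in $f_n(\bar s)=\overline{f_n(s)}$ shows $f$ is real analytic. Hence $f$ satisfies the hypotheses of Theorem \ref{thm:general1} and $\Gamma^f$ exists, with $\Gamma^f$ CLD whenever the $f_n$ share a common left cone.

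\textbf{The three factors.} By Lemma \ref{lemma:divisor_determination} the divisor of $\Gamma^{f_n}$ is an additive functional of $\Div(f_n)$, so the local divisor convergence above propagates to $\Div(\Gamma^{f_n})\to\Div(\Gamma^f)$, and by Proposition \ref{prop:conv_exponent1} these all have convergence exponent $\le\alpha+1$; I fix one integer $d\ge\alpha+1$. Writing, as in the existence proof, $\Gamma^{f_n}=e^{\psi_n+c_n}g_n$ with $g_n=s^{-n_0(f_n)}\prod_{\rho\ne0}E_d(s/\rho)^{n_\rho(\Gamma^{f_n})}$, I would show each factor converges: $g_n\to g$ locally uniformly off the divisor, $\psi_n\to\psi$ as polynomials of degree $\le d$, and $c_n\to c$. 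For $g_n$ one writes $\log g_n$ as the corresponding series and splits it at $|\rho|=2|s|$: the finitely many near terms converge with their multiplicities by Hurwitz, while the tail is bounded using $|\log E_d(s/\rho)|\le C_d|s/\rho|^{d+1}$ and, since $d+1\ge\alpha+1$ forces $|\rho|^{-(d+1)}\le|\rho|^{-(\alpha+1)}\le|\rho|^{-\alpha}$ for $|\rho|\ge1$, by the large-modulus part of $||\Div(\Gamma^{f_n})||$, which the estimates in the proof of Proposition \ref{prop:conv_exponent1} control uniformly by $||\Div(f_n)||_\alpha\le M$. For the polynomial, differentiating $e^{\phi_n}=g_n(\cdot+1)/(f_ng_n)$ writes $\phi_n'$ through the logarithmic-derivative series $g_n'/g_n(s)=-\tfrac{n_0(f_n)}{s}+\sum_{\rho\ne0}n_\rho(\Gamma^{f_n})\tfrac{s^{d}}{\rho^{d}(s-\rho)}$, which converge by the same splitting; thus $\phi_n'\to\phi'$ coefficientwise, the constant term of $\phi_n$ is determined modulo $2\pi i$ by evaluating $e^{\phi_n}$ at a regular point and fixed completely by real-analyticity, so $\psi_n\to\psi$ by continuity of the solution operator of $\Delta\psi=\phi$, $\psi(0)=0$, on polynomials of bounded degree, and $c_n\to c$ from $\Gamma^{f_n}(1)=1$. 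Assembling, $\Gamma^{f_n}=e^{\psi_n+c_n}g_n\to e^{\psi+c}g=\Gamma^f$ uniformly on compact sets outside the poles; alternatively the limit may be identified with $\Gamma^f$ directly through the uniqueness in Theorem \ref{thm:general1}, since it satisfies the functional equation with $f$, the normalization $G(1)=1$, and is LLD, real analytic and of finite order.

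\textbf{Main obstacle.} I expect the crux to be the convergence of the infinite products and series in the previous step: one must reconcile the motion of the divisor points with $n$ — controlled on any fixed disk by Hurwitz's theorem — with the need for an $n$-uniform estimate on the infinitely many divisor points of large modulus. The latter is exactly where the hypothesis $||\Div(f_n)||_\alpha\le M$ enters essentially, through the large-modulus part of Proposition \ref{prop:conv_exponent1}; the compatibility of the two regimes at the cutoff $|\rho|\asymp|s|$, and the control of the constant term of $\phi_n$ against the harmless $e^{2\pi i a s}$ ambiguity by means of real-analyticity, are the points requiring the most care.
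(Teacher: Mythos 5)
Your proposal is correct in substance, but it takes a genuinely different route from the paper on the main point. For the inheritance of the hypotheses by the limit $f$ you do essentially what the paper does (the paper simply asserts that the classes of LLD, real-analytic, functional-equation-satisfying functions are closed and passes to the limit in $||\Div(f_n)||_\alpha\leq M$; you supply the Hurwitz/argument-principle details). The divergence is in proving $\Gamma^{f_n}\to\Gamma^f$: the paper argues by normal families --- it claims the $\Gamma^{f_n}$ are uniformly bounded on compact sets away from the limit divisor, extracts convergent subsequences, and identifies every subsequential limit via the uniqueness clause of Theorem \ref{thm:general1} --- whereas you prove convergence constructively, factor by factor in the Weierstrass representation $\Gamma^{f_n}=e^{\psi_n+c_n}g_n$, with the near/far splitting at $|\rho|\asymp|s|$ and the uniform tail bound coming from $||\Div(f_n)||_\alpha\leq M$ (note that this bound also forces $|\rho|\geq M^{-1/\alpha}$ for every nonzero divisor point, which is what makes the $(\alpha+1)$-norms of the proof of Proposition \ref{prop:conv_exponent1} uniformly controlled). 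Your route is longer but yields an explicit, quantitative locally uniform estimate, and in particular it substantiates the one step the paper leaves fragile: uniform boundedness of the $\Gamma^{f_n}$ does not follow merely from the absence of poles near the limit (zero- and pole-free holomorphic functions such as $e^{ns}$ are unbounded families), so some estimate of your type is really needed there. The paper's route is shorter and cleaner once that boundedness is granted, since uniqueness does all the identification work. The only step of yours I would press on is the normalization of the constant term of $\phi_n$: evaluation of $e^{\phi_n}$ at a regular point determines $\phi_n(0)$ only modulo $2\pi i\ZZ$, and a jump by $2\pi i$ changes $\psi_n$ by $2\pi i s$ and hence $\Gamma^{f_n}$ by the nontrivial factor $e^{2\pi i s}$; you correctly flag that real-analyticity resolves this (the real-analytic choice forces $\phi_n$ to have real coefficients, so $e^{\phi_n(0)}>0$ and $\phi_n(0)=\log e^{\phi_n(0)}$ converges), but this should be written out, as it is exactly the $e^{2\pi i a s}$ ambiguity that the uniqueness argument of Theorem \ref{thm:general1} is designed to kill.
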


\begin{proof}
We can read the divisors $\Div(f_n)$ as an integer valued functions with discrete support which 
are converging to $\Div(f)$ uniformly on compact sets. By uniform boundedness of the sums
$$
||\Div(f_n)||_\alpha= \sum_{\rho \in \Div (f_n), \rho\not= 0} |n_\rho| |\rho|^{-\alpha}
$$
we can pass to the limit and 
$$
||\Div(f)||_\alpha= \lim_{n\to +\infty} ||\Div(f_n)||_\alpha \leq M \ . 
$$
Therefore $f$ has finite order. The class of LLD real analytic functions is closed. The class of functions satisfying the functional 
equation is also closed, hence $f$ satisfies 
the hypothesis of Theorem \ref{thm:general1}, so $\Gamma^f$ is well defined.

Now, since $\Div (f_n)\to \Div (f)$, we have using Lemma \ref{lemma:divisor_determination} 
that $\Div (\Gamma^{f_n})\to \Div (\Gamma^f)$. On compact sets outside of the support of 
$\Div (\Gamma^f)$, the sequence of meromorphic functions $(\Gamma^{f_n})_{n\geq 0}$ is uniformly bounded 
(otherwise we would have a subsequence with poles out of the limit that would contradict 
the convergence of the divisor). Hence, we can extract converging subsequences. But any limit is identified 
by the uniqueness of Theorem \ref{thm:general1}, and we have convergence.
\end{proof}

\subsection{Multiplicative group property.}

Consider the space $\cE$ of LLD finite order meromorphic functions in the plane. We have that 
$$
\cE =\bigcup_{n>0} \cE_n
$$
where $\cE_n$ is the subgroup of meromorphic functions of order $\leq n$. On $\cE_n$ we consider the topology 
given by convergence of the divisor on compact sets and the convergence of functions on compact sets outside 
the limit divisor. On $\cE$ we consider the inductive topology from the exhaustion by the $\cE_n$ spaces.
Also $\cE$ and $\cE_n$ are stable under multiplication, and $(\cE, .)$ and $(\cE_n, .)$ are multiplicative 
topological group. Consider 
the closed subgroup $\cE_0\subset \cE$ of real-analytic functions $f$ normalized such that $f(1)=1$.

\begin{theorem}\label{thm:group_morphism}
The map $\Gamma :  \cE_0 \to \cE_0$ such that 
$$
\Gamma (f)=\Gamma^f
$$
is an continuous injective group morphism.
\end{theorem}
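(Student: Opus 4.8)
The plan is to establish, in order, that $\Gamma$ is well defined as a map $\cE_0 \to \cE_0$, that it is a group homomorphism, that it is injective, and finally that it is continuous; the first three are formal consequences of the uniqueness clause of Theorem~\ref{thm:general1}, while continuity carries the analytic content. For well-definedness, given $f \in \cE_0$ the theorem produces $\Gamma^f$ which is LLD, of finite order, real analytic, and normalized by $\Gamma^f(1)=1$; these are exactly the conditions for membership in $\cE_0$, so $\Gamma^f \in \cE_0$. For the morphism property I would fix $f,g \in \cE_0$, set $h=\Gamma^f\cdot\Gamma^g$, and check that $h$ satisfies the five characterizing properties of $\Gamma^{fg}$: one has $h(1)=\Gamma^f(1)\Gamma^g(1)=1$; multiplying the two functional equations gives $h(s+1)=f(s)g(s)\,h(s)=(fg)(s)\,h(s)$; $h$ is of finite order as a product of finite-order meromorphic functions; $\Div(h)=\Div(\Gamma^f)+\Div(\Gamma^g)$ is supported in $\CC-\CC_+$, so $h$ is LLD; and $h$ is real analytic as a product of real-analytic functions. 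Since $fg \in \cE_0$ (the same four closure checks applied to $f$ and $g$), $\Gamma^{fg}$ is defined, and the uniqueness in Theorem~\ref{thm:general1} forces $h=\Gamma^{fg}$, that is $\Gamma^{fg}=\Gamma^f\,\Gamma^g$.

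Injectivity is immediate from the functional equation read backwards: for any admissible $f$,
$$
f(s)=\frac{\Gamma^f(s+1)}{\Gamma^f(s)},
$$
so $f$ is recovered from $\Gamma^f$ and the map is injective. Equivalently, the kernel is trivial, since $\Gamma^f\equiv 1$ forces $f(s)=\Gamma^f(s+1)/\Gamma^f(s)\equiv 1$, the identity of $(\cE_0,\cdot)$.

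For continuity I would invoke Theorem~\ref{thm:continuity} directly. Given a sequence $f_n \to f$ in the topology of $\cE_0$, the convergence takes place in some $\cE_m$, so all the $f_n$ have order $\le m$ and their divisors converge on compact sets; a convergent, hence bounded, sequence in this topology has uniformly bounded weighted divisor norms, $\|\Div(f_n)\|_\alpha \le M$ with $\alpha=m+1$. These are precisely the hypotheses of Theorem~\ref{thm:continuity}, which gives $\Gamma^{f_n}\to\Gamma^f$ uniformly outside the poles; combined with $\Div(\Gamma^{f_n})\to\Div(\Gamma^f)$, which follows from Lemma~\ref{lemma:divisor_determination} applied to the convergence $\Div(f_n)\to\Div(f)$, this is exactly convergence in the topology of $\cE_0$.

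The main obstacle is this last step: extracting the uniform bound $M$ on the weighted divisor norms from the purely topological convergence $f_n \to f$, because Theorem~\ref{thm:continuity} takes that bound as a hypothesis rather than deducing it from convergence in $\cE_m$. The point to secure is that convergent sequences are bounded in the $\cE_m$-topology and that boundedness there is equivalent to a uniform control of $\|\Div(\cdot)\|_{m+1}$; once this is in place, continuity is a direct application of Theorem~\ref{thm:continuity}. By contrast, the algebraic statements reduce entirely to the uniqueness already proven and require no further estimates.
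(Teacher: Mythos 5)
Your proof follows essentially the same route as the paper: the morphism property by multiplying the two functional equations and invoking the uniqueness of Theorem~\ref{thm:general1}, injectivity by recovering $f(s)=\Gamma^f(s+1)/\Gamma^f(s)$ so that the kernel is trivial, and continuity by citing Theorem~\ref{thm:continuity}. The issue you flag in the continuity step --- that Theorem~\ref{thm:continuity} assumes a uniform bound $\|\Div(f_n)\|_\alpha\leq M$, which is not obviously implied by convergence of divisors on compact sets in the topology of $\cE_m$ --- is a genuine point, but the paper's own proof is no more detailed there, consisting only of the statement that continuity results from Theorem~\ref{thm:continuity}.
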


\begin{proof}Continuity results from Theorem \ref{thm:continuity}. 
We observe that from 
\begin{align*}
\Gamma^f(s+1) &= f(s) \Gamma^f(s) \\
\Gamma^g(s+1) &= g(s) \Gamma^g(s) \\
\end{align*}
we get 
$$
\Gamma^f(s+1)\Gamma^g(s+1) = f(s) g(s) \Gamma^f(s)  \Gamma^g(s)
$$
and by uniqueness of Theorem \ref{thm:general1} we get 
$$
\Gamma^f .\Gamma^g =\Gamma^{fg} \ .
$$
Also, if $\Gamma^f=1$, then directly from the functional equation we get that $f=1$, and $\text{Ker}(\Gamma)=\{1\}$.
\end{proof}

This Theorem justifies using Euler Gamma function as building block of the general solution 
by decomposing along the divisor.

\medskip

\textbf{Remark.}

\medskip

Consider the shift operator $T:\cE \to \cE$, $f(s)\mapsto T(f) =f(s+1)$ and the associated 
multiplicative cohomological equation in $g$ with 
$f$ given,
$$
T(g).g^{-1} =f \ .
$$
We have proved that the cohomological equation can be solved in $\cE$ by the group morphism $\Gamma$,  $g=\Gamma^f$. 
For $f\in \cE_\alpha$ it can be solved in $\cE_{\alpha +1}$. We observe a similar phenomenon of ``loss of regularity''
as in ``Small Divisors'' problems than in our setting can be interpreted as ``loss of transalgebraicity''.

\section{Application: Barnes higher Gamma functions.}\label{sec:Barnes_higher}

Now we can generalize the hierarchy of classical Barnes Gamma functions.

\begin{definition}\label{def:higher_gamma}
Let $f$ be a real analytic LLD meromorphic function of finite order such that $f(1)=1$.
The higher Gamma functions  associated to $f$ is a family $(\Gamma_N^f)_{N\geq 0}$ satisfying the following properties:
\begin{enumerate}
  \item $\Gamma^f_0(s)=f(s)^{-1}$,\label{HGC1}
  \item $\Gamma^f_N(1)=1$, \label{HGC2}
  \item $\Gamma^f_{N+1}(s+1)=\Gamma^f_{N}(s)^{-1} \, \Gamma^f_{N+1}(s)$, for $N\geq 0$,\label{HGC3}
  \item $\Gamma^f_N$ is a meromorphic function of finite order, \label{HGC4}
  \item $\Gamma^f_N$ is LLD,\label{HGC5}
  \item $\Gamma^f_N$ is real analytic.\label{HGC6}
\end{enumerate} 
\end{definition}

\begin{theorem}\label{thm:existence+uniqueness}
Let $f$ be a real analytic LLD meromorphic function of finite order such that $f(1)=1$. There exists 
a unique family of higher Gamma functions $(\Gamma^f_N)_N$ associated to $f$.
If $f$ is CLD then the $\Gamma^f_N$ are CLD.
\end{theorem}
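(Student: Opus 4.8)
The plan is to prove existence and uniqueness simultaneously by induction on $N$, reducing each step to one application of Theorem \ref{thm:general1}. The key observation is that the recursion (\ref{HGC3}) says exactly that $\Gamma_{N+1}^f$ is the Gamma function associated, in the sense of Theorem \ref{thm:general1}, to the multiplier $(\Gamma_N^f)^{-1}$; that is, once $\Gamma_N^f$ is in hand one should set $\Gamma_{N+1}^f = \Gamma^{(\Gamma_N^f)^{-1}}$.

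First I would dispose of the base case. Condition (\ref{HGC1}) forces $\Gamma_0^f = f^{-1}$, which is meromorphic of finite order, real analytic, and LLD because $f$ is LLD (reciprocation exchanges zeros and poles but keeps the divisor inside $\CC - \CC_+$); moreover $\Gamma_0^f(1) = f(1)^{-1} = 1$ by the normalization $f(1)=1$, so (\ref{HGC2}) holds at $N=0$. This shows at once that $\Gamma_0^f$ exists and is the only admissible choice.

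Next I would carry out the inductive step. Assume $\Gamma_N^f$ is constructed and satisfies (\ref{HGC2})--(\ref{HGC6}). The crucial point, which I would isolate as a short closure remark, is that the class of real analytic LLD meromorphic functions of finite order is stable under $h \mapsto h^{-1}$: the reciprocal of a finite order meromorphic function has the same order, the reflection symmetry $\overline{h(\bar s)} = h(s)$ passes to $h^{-1}$, and the divisor stays in $\CC - \CC_+$. Hence $(\Gamma_N^f)^{-1}$ satisfies the hypotheses of Theorem \ref{thm:general1}, which yields a unique $\Gamma^{(\Gamma_N^f)^{-1}}$ that is LLD, real analytic, of finite order, equal to $1$ at $s=1$, and satisfies $\Gamma^{(\Gamma_N^f)^{-1}}(s+1) = (\Gamma_N^f(s))^{-1}\, \Gamma^{(\Gamma_N^f)^{-1}}(s)$. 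Setting $\Gamma_{N+1}^f := \Gamma^{(\Gamma_N^f)^{-1}}$ then verifies (\ref{HGC2})--(\ref{HGC6}) for $N+1$, giving existence; uniqueness of $\Gamma_{N+1}^f$ is precisely the uniqueness clause of Theorem \ref{thm:general1} for that multiplier, since any admissible candidate must obey the same functional equation and normalization. For the CLD statement I would run the same induction carrying the CLD property along: $f$ CLD gives $\Gamma_0^f = f^{-1}$ CLD, reciprocation preserves CLD, and the final clause of Theorem \ref{thm:general1} propagates CLD from the multiplier to $\Gamma_{N+1}^f$.

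I expect no serious obstacle, since the content is entirely in recognizing the recursion as iterated application of the operator $\Gamma$ of Theorem \ref{thm:general1} to successive reciprocals. The only step requiring genuine care is checking that the ambient function class is closed under reciprocation, so that Theorem \ref{thm:general1} is legitimately applicable at every stage and the induction is self-sustaining.
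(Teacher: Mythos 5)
Your proposal is correct and follows essentially the same route as the paper: the paper's proof is precisely the one-line induction setting $\Gamma^f_0 = f^{-1}$ and constructing $\Gamma^f_{N+1}$ from $(\Gamma^f_N)^{-1}$ via Theorem \ref{thm:general1}. Your additional verification that the class of real analytic LLD finite order meromorphic functions is closed under reciprocation is a useful detail the paper leaves implicit.
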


\begin{proof} We set $\Gamma^f_0(s)=f(s)^{-1}$, and for $N\geq 0$, the function $\Gamma^f_{N+1}$ is 
constructed from $1/\Gamma^f_N$ using 
Theorem \ref{thm:general1}, and is unique. 
\end{proof}

The uniqueness property implies the following multiplicative group morphism property:

\begin{corollary}
For $N\geq 0$, we consider the map $\Gamma_N: \cE_0 \to \cE_0$ defined by $\Gamma_N (f) =\Gamma_N^f$. Then 
$\Gamma_N$ is a continuous injective group morphism.
\end{corollary}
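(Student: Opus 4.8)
The plan is to exploit the recursive structure of the family $(\Gamma_N^f)_N$. Reading the defining functional equation (\ref{HGC3}) against Theorem \ref{thm:general1}, we see that $\Gamma_{N+1}^f$ is exactly the Gamma function associated, in the sense of that theorem, to the input $(\Gamma_N^f)^{-1}$; that is, $\Gamma_{N+1}^f=\Gamma^{(\Gamma_N^f)^{-1}}$, while $\Gamma_0^f=f^{-1}$. Since $\Gamma_N^f$ is LLD, real analytic, of finite order, with $\Gamma_N^f(1)=1$, its reciprocal $(\Gamma_N^f)^{-1}$ has the negated divisor, hence is again LLD, real analytic, of finite order, and takes the value $1$ at $s=1$; thus $(\Gamma_N^f)^{-1}\in\cE_0$ and the operator $\Gamma$ of Theorem \ref{thm:group_morphism} applies. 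Writing $\iota\colon\cE_0\to\cE_0$ for the group inversion $h\mapsto h^{-1}$, this reads $\Gamma_0=\iota$ and $\Gamma_{N+1}=\Gamma\circ\iota\circ\Gamma_N$, so each $\Gamma_N$ is a finite composition of the two maps $\Gamma$ and $\iota$ (in particular $\Gamma_1=\Gamma\circ\iota\circ\iota=\Gamma$).

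First I would record the algebraic properties. The group $(\cE_0,\cdot)$ is abelian, so inversion $\iota$ is a group automorphism equal to its own inverse, and $\Gamma$ is an injective group endomorphism by Theorem \ref{thm:group_morphism} (it is a morphism there, and $\operatorname{Ker}\Gamma=\{1\}$ forces injectivity). A composition of injective endomorphisms and automorphisms is again an injective endomorphism, so the claim follows by a trivial induction from $\Gamma_{N+1}=\Gamma\circ\iota\circ\Gamma_N$ with base case $\Gamma_0=\iota$. Concretely, the morphism identity $\Gamma_N^{fg}=\Gamma_N^f\,\Gamma_N^g$ propagates because $(hk)^{-1}=h^{-1}k^{-1}$ in the abelian group and $\Gamma^{hk}=\Gamma^h\Gamma^k$ by Theorem \ref{thm:group_morphism}; injectivity propagates because $\iota$ is bijective and $\Gamma$ has trivial kernel. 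Alternatively injectivity is seen directly: if $\Gamma_N^f\equiv 1$ then (\ref{HGC3}) forces $\Gamma_{N-1}^f\equiv 1$, and descending to $\Gamma_0^f=f^{-1}$ gives $f\equiv 1$.

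It remains to address continuity, which I expect to be the only point needing genuine care. Continuity of $\Gamma$ is Theorem \ref{thm:continuity}, so I would only need continuity of $\iota$ for the topology on each $\cE_n$ (convergence of divisors on compact sets together with convergence of the functions on compact sets away from the limit divisor). If $h_n\to h$, then $\Div(h_n^{-1})=-\Div(h_n)\to-\Div(h)=\Div(h^{-1})$, and on any compact set avoiding the zeros of $h$ the $h_n$ converge uniformly to a nonvanishing limit, so $h_n^{-1}\to h^{-1}$ there; hence $\iota$ is continuous, and bicontinuous since it is an involution. The one subtlety is the bookkeeping of orders: each application of $\Gamma$ raises the convergence exponent by one (Proposition \ref{prop:conv_exponent1}), so $\Gamma_N$ sends $\cE_0\cap\cE_\alpha$ into some $\cE_0\cap\cE_{\alpha+N'}$; since the inductive topology on $\cE_0$ makes a map continuous as soon as its restriction to each $\cE_n$ is continuous and the inclusions $\cE_n\hookrightarrow\cE$ are continuous, the composition $\Gamma\circ\iota\circ\Gamma_N$ is continuous and the induction closes. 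The main obstacle is therefore not algebraic but precisely this verification that the level-raising compositions remain continuous for the inductive-limit topology.
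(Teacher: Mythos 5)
Your argument is correct, but it is packaged differently from the paper's. The paper proves the morphism property in one stroke: it observes that the family $(\Gamma_N^f\,\Gamma_N^g)_{N\geq 0}$ satisfies all the defining properties of the higher Gamma hierarchy attached to $fg$, and concludes $\Gamma_N^{fg}=\Gamma_N^f\,\Gamma_N^g$ by the uniqueness in Theorem \ref{thm:existence+uniqueness}; injectivity and continuity are then dispatched exactly as for $N=1$. You instead make the recursion explicit as $\Gamma_0=\iota$ and $\Gamma_{N+1}=\Gamma\circ\iota\circ\Gamma_N$, where $\Gamma$ is the one-step operator of Theorem \ref{thm:group_morphism} and $\iota$ is inversion, and you inherit everything by composing morphisms. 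The two routes rest on the same underlying fact (uniqueness in Theorem \ref{thm:general1}), but yours buys a cleaner treatment of the analytic side: the paper's ``the continuity follows as before'' is silent about why inversion is continuous for the divisor-plus-compact-convergence topology and about the bookkeeping of convergence exponents across the inductive limit, whereas you address both (continuity of $\iota$ via negation of divisors and uniform convergence of reciprocals away from the limit divisor, and the level-raising of Proposition \ref{prop:conv_exponent1} being compatible with the inductive topology). The paper's route is shorter and avoids having to verify that $(\Gamma_N^f)^{-1}\in\cE_0$ at each step, a point you correctly check. Both proofs are valid; yours is the more explicit of the two.
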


\begin{proof}
Given $f,g \in \cE_0$, it is clear that the sequence of functions $\Gamma_N^f.\Gamma_N^g$ satisfy all the properties 
of higher Gamma functions associated to $fg$, hence,  by uniqueness, we have $\Gamma_N^{fg}=\Gamma_N^f\Gamma_N^g$, hence the 
group morphism property. The kernel is reduced to the constant function $1$ by uniqueness, hence the injectivity. The continuity 
follows as before from Theorem \ref{thm:continuity}.
\end{proof}

\begin{definition}[Barnes higher Gamma functons $\Gamma_N$]
The higher Gamma functions associated to $f(s)=s$ is the family of higher 
Barnes Gamma functions $(\Gamma_N)_{N\geq 0}$, where $\Gamma_1$ is 
Euler Gamma function.
\end{definition}

Note that Vign\'eras' normalization (1979, \cite{Vi}) is slightly different and defines (for $f(s)=s$) 
a hierarchy of functions
$(G_N^f)_{N\geq 0}$ as in Definition \ref{def:higher_gamma} but with the functional equation replaced by
$$
G^f_{N+1}(s+1)=G^f_{N}(s)G^f_{N+1}(s)
$$
We have a simple direct relation between the two hierarchies 
$$
G_N^f= (\Gamma_N^f)^{(-1)^{N+1}} \ .
$$ 
For $f(s)=s$ we obtain $G^f_2=G$ which is Barnes $G$-function (Barnes, 1900, \cite{Ba}). The convention in  
 Definition \ref{def:higher_gamma} is compatible with Barnes multiple Gamma functions that generalize 
 the $(\Gamma_N)$ (Barnes, 1904, \cite{Ba2}, see Section \ref{sec:multiple_gamma}).

\begin{proposition}
The higher Barnes Gamma function $\Gamma_N$ is CLD of order $N$, and 
$$
\Div(\Gamma_N) =-\sum_{n=0}^{+\infty} \binom{n+N-1}{N-1} .(-n)
$$
\end{proposition}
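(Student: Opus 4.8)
The plan is to prove both assertions by induction on $N$, reading off the divisor of $\Gamma_{N+1}$ from that of $\Gamma_N$ through Lemma \ref{lemma:divisor_determination}. The key observation is that, by Definition \ref{def:higher_gamma} and Theorem \ref{thm:existence+uniqueness}, the function $\Gamma_{N+1}$ is exactly the Gamma function associated to $h:=(\Gamma_N)^{-1}$ in the sense of Theorem \ref{thm:general1}, since its functional equation (HGC3) reads $\Gamma_{N+1}(s+1)=h(s)\,\Gamma_{N+1}(s)$. Writing $\Gamma_N$ for the hierarchy attached to $f(s)=s$, I claim that for every $N\geq 1$ one has $n_{-m}(\Gamma_N)=-\binom{m+N-1}{N-1}$ for each integer $m\geq 0$, and $n_\rho(\Gamma_N)=0$ for all other $\rho$; this is precisely the asserted formula for $\Div(\Gamma_N)$.

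For the base case $N=1$, I would apply Lemma \ref{lemma:divisor_determination} to $f(s)=s$, whose divisor is the single simple zero $n_0(f)=1$. Formula \eqref{eq:multiplicities} then gives $n_{-m}(\Gamma_1)=-\sum_{k\geq 0} n_{-m+k}(f)=-n_0(f)=-1$ for $m\geq 0$ and $0$ otherwise, so $\Gamma_1=\Gamma$ has simple poles exactly at the non-positive integers, matching $-\binom{m}{0}=-1$.

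For the inductive step, assume the formula for $\Gamma_N$. Since $n_\rho(h)=-n_\rho(\Gamma_N)$, Lemma \ref{lemma:divisor_determination} applied to $h$ yields
\[
n_{-m}(\Gamma_{N+1})=-\sum_{k\geq 0} n_{-m+k}(h)=\sum_{k\geq 0} n_{-m+k}(\Gamma_N)=\sum_{j=0}^{m} n_{-j}(\Gamma_N),
\]
the last equality holding because $\Gamma_N$ is supported on the non-positive integers, so only the terms with $-m+k\leq 0$ survive. Substituting the inductive hypothesis and invoking the hockey-stick identity $\sum_{j=0}^{m}\binom{j+N-1}{N-1}=\binom{m+N}{N}$ gives $n_{-m}(\Gamma_{N+1})=-\binom{m+N}{N}=-\binom{m+(N+1)-1}{(N+1)-1}$, which is the formula at level $N+1$. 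This closes the induction and establishes the divisor formula.

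Finally, the divisor is supported on the non-positive reals, hence lies in a closed cone of $\CC-\CC_+$, so $\Gamma_N$ is CLD (this also follows from Theorem \ref{thm:existence+uniqueness}, as $f(s)=s$ is CLD). For the order, the asymptotics $\binom{m+N-1}{N-1}\sim m^{N-1}/(N-1)!$ show that $\sum_{m\geq 1}\binom{m+N-1}{N-1}\,m^{-\alpha}$ converges precisely for $\alpha>N$, so the convergence exponent of $\Div(\Gamma_N)$ is exactly $N$, in agreement with the increment-by-one statement of Proposition \ref{prop:conv_exponent1} applied at each step. I expect the main obstacle to be not conceptual but a matter of bookkeeping: keeping the signs straight when passing from $\Gamma_N$ to $h=1/\Gamma_N$, and recognizing the telescoping binomial sum. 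The only genuinely delicate point is that the order of the function itself equals the convergence exponent $N$ of its divisor, which requires confirming, via the Weierstrass construction of Theorem \ref{thm:general1}, that the exponential factor $e^{\psi(s)+c}$ contributes no polynomial of degree exceeding $N$.
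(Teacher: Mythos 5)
Your proof is correct and follows essentially the same route as the paper: induction on $N$ via Lemma \ref{lemma:divisor_determination} applied to $h=\Gamma_N^{-1}$, the hockey-stick identity $\sum_{k=0}^{n}\binom{k+N-1}{N-1}=\binom{n+N}{N}$ for the divisor multiplicities, and CLD plus the convergence-exponent count for the order. The one point you flag as delicate — that the exponential factor does not raise the order beyond $N$ — is exactly what the paper settles by observing inductively that the Weierstrass polynomial $Q_N$ has degree $N$.
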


\begin{proof}
The function $\Gamma_N$ is in the class CLD by induction since $f$ is in this class. 
Any $\alpha >0$ is exponent of convergence for $f(s)=s$, so 
by Proposition \ref{prop:conv_exponent2} we have by induction that any $\alpha >N$ is 
exponent of convergence for $\Gamma_N$. We can check this directly using the formula 
for the divisor that follows by induction from Lemma \ref{lemma:divisor_determination} 
and the combinatorial identity
$$
\binom{n+N}{N} =\sum_{k=0}^n \binom{k+N-1}{N-1}
$$
If we write the Weierstrass factorization and $Q_N$ denotes the Weierstrass polynomial, we have 
that $\deg Q_1 =1$, and by induction the same proof gives that $\deg Q_N =N$.
\end{proof}

When we drop the real analyticity condition, there is no longer uniqueness, but we can prove the following Theorem,

\begin{theorem}\label{thm:Barnes_non_unique}
Let $f$ be a LLD meromorphic function of finite order such that $f(1)=1$.
Consider a family $(g_N^f)_{N\geq 0}$ satisfying the following properties:
\begin{enumerate}
  \item $g_0^f(s)=f(s)^{-1}$,\label{HGC1_bis}
  \item $g^f_N(1)=1$, \label{HGC2_bis}
  \item $g^f_{N+1}(s+1)=g^f_{N}(s)^{-1} g^f_{N+1}(s)$, for $N\geq 0$,\label{HGC3_bis}
  \item $g^f_N$ is a meromorphic function of finite order, \label{HGC4_bis}
  \item $g^f_N$ is LLD,\label{HGC5_bis}
\end{enumerate} 
Then there exists an integer sequence $(a_k)_{k\geq 0}$,  such that  
$$
g^f_N(s) =\exp \left (  2 \pi i \sum_{k=0}^N a_{N-k} \binom{s}{k}  \right ) \Gamma^f_N(s)
$$
\end{theorem}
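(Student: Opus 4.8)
The plan is to compare any admissible family $(g^f_N)$ with the canonical family $(\Gamma^f_N)$ produced by Theorem \ref{thm:existence+uniqueness}, by forming the ratios $F_N(s) = g^f_N(s)/\Gamma^f_N(s)$, showing each $F_N$ is the exponential of a polynomial, and then reading off the coefficients. First I would check that $g^f_N$ and $\Gamma^f_N$ have the same divisor, so that $F_N$ is entire and zero free. This goes by induction on $N$: for $N=0$ both functions equal $f^{-1}$, and if $g^f_N$ and $\Gamma^f_N$ share a divisor then so do $1/g^f_N$ and $1/\Gamma^f_N$; since $g^f_{N+1}$ and $\Gamma^f_{N+1}$ are both LLD and both satisfy the first order equation (\ref{HGC3_bis}) with multiplier $1/g^f_N$, resp. $1/\Gamma^f_N$, Lemma \ref{lemma:divisor_determination}, whose divisor formula depends only on the divisor of the multiplier, gives $\Div(g^f_{N+1})=\Div(\Gamma^f_{N+1})$. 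Being a ratio of finite order functions with no zeros nor poles, each $F_N$ is a zero free entire function of finite order, hence $F_N(s)=\exp(2\pi i\,P_N(s))$ for a polynomial $P_N$; since $g^f_0=f^{-1}=\Gamma^f_0$ we may take $P_0=0$.

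Next I would extract the difference equation. Dividing (\ref{HGC3_bis}) for $g^f$ by the same equation for $\Gamma^f$ yields $F_{N+1}(s+1)=F_N(s)^{-1}F_{N+1}(s)$. Passing to logarithms, the polynomial $P_{N+1}(s+1)-P_{N+1}(s)+P_N(s)$ has exponential identically $1$, hence is a constant integer, so that
\begin{equation*}
\Delta P_{N+1}(s) = -P_N(s) + c_N, \qquad c_N\in\ZZ .
\end{equation*}
Moreover $g^f_N(1)=\Gamma^f_N(1)=1$ gives $F_N(1)=1$, that is $P_N(1)\in\ZZ$. The point of the normalization (\ref{HGC2_bis}) is precisely to pin these constants.

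Then I would solve the recursion in the basis of binomial polynomials $\binom{s}{k}$, which diagonalizes the difference operator through $\Delta\binom{s}{k}=\binom{s}{k-1}$, exactly the property already exploited for the falling factorials in the proof of Theorem \ref{thm:general1}. Writing $P_N(s)=\sum_{k\ge 0}\beta_{N,k}\binom{s}{k}$, the displayed equation becomes the index shift $\beta_{N+1,k+1}=-\beta_{N,k}$ for $k\ge 1$, together with $\beta_{N+1,1}=-\beta_{N,0}+c_N$, while $\beta_{N+1,0}$ is fixed by $P_{N+1}(1)=\beta_{N+1,0}+\beta_{N+1,1}\in\ZZ$. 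Starting from $P_0=0$, an induction on $N$ shows simultaneously that every $\beta_{N,k}$ is an integer and that $\beta_{N,k}$ depends, up to the sign dictated by the recursion, only on $N-k$: iterating the shift gives $\beta_{N,k}=(-1)^k\beta_{N-k,0}$, so each transition introduces exactly one genuinely new integer, the normalized constant term $\beta_{m,0}$, which is recorded as the next term $a_m$ of a single integer sequence and then propagated up the binomial tower. Relabelling these integers as $(a_k)_{k\ge 0}$ yields
\begin{equation*}
g^f_N(s) = \exp\left(2\pi i\sum_{k=0}^{N} a_{N-k}\binom{s}{k}\right)\Gamma^f_N(s) .
\end{equation*}

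The step I expect to be the main obstacle is the last one: the bookkeeping showing that one and the same sequence $(a_k)$, independent of $N$, controls all levels at once. The delicate points are that the recursion couples only neighbouring levels and shifts the binomial index by one, forcing the coefficient of $\binom{s}{k}$ at level $N$ to be tied to the constant term at level $N-k$; that the one new integer degree of freedom per level is exactly that constant term (the apparent extra freedom in $c_N$ being absorbed into it); and that the condition $P_N(1)\in\ZZ$ propagates full integrality rather than merely half integrality, which is what guarantees the $a_k$ are integers. The correct tracking of the signs produced by the minus sign in the difference equation is the one place where the stated form must be matched with care. By contrast, the same divisor argument of the first paragraph is routine once one observes that the divisor formula of Lemma \ref{lemma:divisor_determination} depends only on the divisor of the multiplier.
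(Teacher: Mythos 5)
Your proof is correct and takes essentially the same route as the paper: compare with the canonical hierarchy $(\Gamma^f_N)$, reduce to the case $f=1$, write the ratios as $e^{2\pi i P_N}$, and solve the difference equations $\Delta P_{N+1}\equiv -P_N \pmod{\ZZ}$ in the binomial basis, with the normalization $P_N(1)\in\ZZ$ forcing integrality of the coefficients. The sign point you flag is real --- your own relation $\beta_{N,k}=(-1)^k\beta_{N-k,0}$ shows the exponent should read $\sum_{k=0}^{N}(-1)^k a_{N-k}\binom{s}{k}$ --- but the paper's proof elides the same factor in its final formula, so this is a shared cosmetic slip rather than a gap in your argument.
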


\begin{proof}
This follows by induction from Theorem \ref{thm:general1_bis}. We can also give a direct argument using the group structure. 
For any solution $(g_N^f)_{N\geq 0}$, the functions $h_N^f =\Gamma^f_N /g^f_N$ are solution for $f=1$. The case $f=1$ is easily resolved. 
By induction, the solutions have no zeros nor poles, and finite order, so 
we have 
$$
h_N^f (s) =e^{2 \pi i A_N(s)}
$$
where the $(A_N)_{N\geq 0}$ is a sequence of polynomials satisfying
$$
\Delta A_{N+1} =-A_N
$$
and $A_0(s)= a_0 \in \ZZ$. The difference equation and the sequence $a_N=(-1)^N A_N(0)$ determines the sequence of polynomials $(A_N)_{N\geq 0}$ that are given 
by the explicit formula
$$
A_N(s)= \sum_{k=0}^N a_{N-k} \binom{s}{k}
$$
\end{proof}

\section{Application: Jackson $q$-Gamma function.}

For $0<q<1$, Jackson (1905, \cite{Ja1}, \cite{Ja2}) (see also the precursor work by Halphen \cite{Ha}, 
vol. 1, p. 240; and H\"older \cite{Ho})  defined the $q$-Gamma function $\Gamma_q$ by the product formula
$$
\Gamma_q(s) =\frac{(q;q)_\infty}{(q^s;q)_\infty} (1-q)^{1-s}
$$
where the $\infty$-Pochhammer symbol is 
$$
(z;q)_\infty=\prod_{k=0}^{+\infty} (1-zq^k) \ .
$$
The $q$-Gamma function satisfies the functional equation
$$
\Gamma_q(s+1) = \frac{1-q^s}{1-q} \, \Gamma_q(s)
$$
and Euler Gamma function appears as the limit when $q\to 1$,
$$
\Gamma (s)=\lim_{q\to 1-0} \Gamma_q(s)
$$

Askey (\cite{As}, 1980) proved a $q$-analog of the Bohr-Mollerup theorem characterizing 
$\Gamma_q$ by its functional equation, the normalization $\Gamma_q(1)=1$, 
and the real log-convexity of $\Gamma_q$. It is natural to investigate if we can use our approach. 
The answer is affirmative as shows the next Theorem.

\begin{theorem}
The $q$-Gamma function is the only real analytic, finite order meromorphic function such that $\Gamma_q(1)=1$ and 
satisfying the functional equation,
$$
\Gamma_q(s+1) = \frac{1-q^s}{1-q} \, \Gamma_q(s)
$$
\end{theorem}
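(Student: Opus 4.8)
The plan is to recognize the functional equation as an instance of the general multiplicative equation $\Gamma_q(s+1)=f(s)\,\Gamma_q(s)$ with multiplier
$$
f(s)=\frac{1-q^s}{1-q},
$$
and then to invoke Theorem \ref{thm:general1}. First I would check that $f$ lies in the class to which that theorem applies. Writing $q=e^{-\lambda}$ with $\lambda=-\log q>0$, we have $f(s)=(1-e^{-\lambda s})/(1-q)$, which is entire (hence pole-free), of order one, and real for real $s$, so real analytic. Its zeros solve $e^{-\lambda s}=1$, i.e.\ $s\in\frac{2\pi i}{\lambda}\ZZ$; these all lie on the imaginary axis $\{\Re s=0\}\subset\CC-\CC_+$, so $f$ is LLD. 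Finally $f(1)=(1-q)/(1-q)=1$. Thus $f$ satisfies all hypotheses of Theorem \ref{thm:general1}, which produces a unique real analytic, LLD, finite order solution $\Gamma^f$ normalized by $\Gamma^f(1)=1$.

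The second step is to verify that the classical $\Gamma_q$ defined by the Jackson product is exactly this $\Gamma^f$, by checking conditions (\ref{GC1})--(\ref{GC5}). From $\Gamma_q(s)=(q;q)_\infty\,(q^s;q)_\infty^{-1}(1-q)^{1-s}$ one reads off that $\Gamma_q$ has no zeros, while its poles come from the vanishing of $(q^s;q)_\infty=\prod_{k\ge 0}(1-q^{s+k})$, i.e.\ at $s=-k+\frac{2\pi i n}{\log q}$ with $k\ge 0$, $n\in\ZZ$; all of these have $\Re s=-k\le 0$, so $\Gamma_q$ is LLD. The factor $(1-q)^{1-s}$ is a nonvanishing exponential, the product is of finite order, every factor is real for real $s$ so $\Gamma_q$ is real analytic, and $\Gamma_q(1)=1$ since $(q^1;q)_\infty=(q;q)_\infty$. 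The functional equation is the one given. Hence $\Gamma_q$ meets all the defining conditions of $\Gamma^f$, and uniqueness in Theorem \ref{thm:general1} forces $\Gamma_q=\Gamma^f$.

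For the uniqueness (``only'') assertion, any competing real analytic, finite order, LLD solution $g$ with $g(1)=1$ equals $\Gamma^f=\Gamma_q$ directly by Theorem \ref{thm:general1}. It is worth emphasizing why real analyticity is essential: by Theorem \ref{thm:general1_bis} the only freedom in the LLD finite-order class with $g(1)=1$ is multiplication by $e^{2\pi i a s}$ with $a\in\ZZ$, and such a factor is real for all real $s$ only when $a=0$, so real analyticity pins the solution down uniquely.

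The main obstacle I expect is the divisor/LLD bookkeeping rather than anything conceptual: one must confirm that all poles of $\Gamma_q$ (equivalently all zeros of $(q^s;q)_\infty$) really lie in the closed left half-plane and that the product is genuinely of finite order. One must also keep the LLD hypothesis in play, as in Theorem \ref{thm:general1}: without the restriction ``no zeros nor poles in $\CC_+$'' the functional equation together with real analyticity and $g(1)=1$ would admit spurious solutions such as $\cos^2(\pi s)\,\Gamma_q(s)$, whose extra zeros at the half-integers (e.g.\ at $s=\tfrac12\in\CC_+$) place it outside the LLD class. It is precisely working within that class, where the characterization is correct.
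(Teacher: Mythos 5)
Your proof is correct and follows essentially the same route as the paper: identify $f(s)=(1-q^s)/(1-q)$ as a real analytic, order-one LLD function with $f(1)=1$ and zeros $\tfrac{2\pi i}{\log q}\ZZ$ on the imaginary axis, then invoke Theorem \ref{thm:general1}. Your additional verification that Jackson's product itself satisfies conditions (\ref{GC1})--(\ref{GC5}) (and hence coincides with $\Gamma^f$) is a detail the paper leaves implicit, but it does not change the argument.
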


\begin{proof}
This is an application of our general Theorem \ref{thm:general1} with 
$$
f(s) =\frac{1-q^s}{1-q}
$$
which is an order $1$ real analytic function in  the class LLD (but not CLD), $f(1)=1$, and 
$$
\Div (f) = \sum_{k\in \ZZ} 1. \left (\frac{2 \pi i k}{\log q} \right )\ .
$$
\end{proof}

An application of the continuity Theorem \ref{thm:continuity} shows:

\begin{proposition}
We have
$$
\lim_{q\to 1-0} \Gamma_q =\Gamma
$$
uniformly on compact sets of $\CC$.
\end{proposition}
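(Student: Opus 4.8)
The plan is to obtain this as a direct application of the continuity Theorem \ref{thm:continuity} to the family $f_q(s)=\frac{1-q^s}{1-q}$ as $q\to 1^-$. By the uniqueness in Theorem \ref{thm:general1}, the preceding theorem identifies $\Gamma_q=\Gamma^{f_q}$, and Euler Gamma function is $\Gamma=\Gamma^f$ for $f(s)=s$; so the statement reduces to showing $\Gamma^{f_q}\to\Gamma^f$. Since uniform convergence on compact sets is metrizable, it suffices to treat an arbitrary sequence $q_n\to 1^-$, which is the setting in which the hypotheses of Theorem \ref{thm:continuity} are phrased.

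First I would verify that $f_q\to f$ with $f(s)=s$, uniformly on compact sets. Writing $q=e^{-t}$ with $t\to 0^+$, one has $1-q^s=1-e^{-st}=st\,(1+O(t))$ and $1-q=t\,(1+O(t))$ uniformly for $s$ in a fixed compact set, whence $f_q(s)=\frac{1-q^s}{1-q}\to s$ uniformly on compacts. Each $f_q$ is entire, real for real $s$ (hence real analytic), of order $1$, and LLD, so it satisfies the hypotheses of Theorem \ref{thm:general1}.

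The key step is the uniform control of the divisors. The zeros of $f_q$ are simple and located at $\rho_k=\frac{2\pi i k}{\log q}$, $k\in\ZZ$, all on the imaginary axis, so $f_q$ is indeed LLD with the divisor recorded before. Fixing $\alpha=2>1$, I compute
$$
||\Div(f_q)||_\alpha=\sum_{k\neq 0}|\rho_k|^{-\alpha}=\Big(\frac{|\log q|}{2\pi}\Big)^{\alpha}\sum_{k\neq 0}|k|^{-\alpha}=2\zeta(\alpha)\Big(\frac{|\log q|}{2\pi}\Big)^{\alpha}.
$$
Since $|\log q|\to 0$ as $q\to 1^-$, this quantity is bounded (indeed tends to $0$) along any sequence $q_n\to 1^-$; restricting to $q\in(q_0,1)$ yields the uniform bound $M=2\zeta(\alpha)(|\log q_0|/2\pi)^{\alpha}$ required by Theorem \ref{thm:continuity}. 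Geometrically, the nonzero zeros escape to infinity as the spacing $2\pi/|\log q|$ diverges, and only the zero at $s=0$ survives, matching $\Div(f)=1\cdot(0)$ for $f(s)=s$.

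With these verifications in hand, Theorem \ref{thm:continuity} applies to $(f_{q_n})$: the functions have uniformly bounded convergence exponent, satisfy the hypotheses of Theorem \ref{thm:general1}, and converge to $f$; hence $\Gamma^{f_{q_n}}\to\Gamma^f$ uniformly outside the poles, that is $\Gamma_{q_n}\to\Gamma$. As the sequence was arbitrary, $\lim_{q\to 1^-}\Gamma_q=\Gamma$ uniformly on compact sets avoiding the poles of $\Gamma$. The only real obstacle is the bookkeeping of the divisor bound above, together with the reduction of the continuous limit to sequences; the analytic substance of the convergence is carried entirely by Theorem \ref{thm:continuity}.
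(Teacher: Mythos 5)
Your proposal is correct and follows exactly the paper's route: the paper's proof consists of noting the uniform convergence $\frac{1-q^s}{1-q}\to s$ on compact sets and invoking Theorem \ref{thm:continuity}. You additionally verify the uniform divisor bound $||\Div(f_q)||_{2}=2\zeta(2)\left(\frac{|\log q|}{2\pi}\right)^{2}\to 0$ that the continuity theorem requires, a hypothesis the paper leaves implicit; this is a worthwhile check but not a different method.
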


\begin{proof}
Uniformly on compact sets of $\CC$ we have
$$
\lim_{q\to 1-0} \frac{1-q^s}{1-q} =s
$$
and we use Theorem \ref{thm:continuity}.
\end{proof}

\medskip

Nishiwaza (1996, \cite{Ni1996}) has defined the $q$-analog $\Gamma_{N, q}$ of Barnes higher 
Gamma functions $\Gamma_N$ following the Bohr-Mollerup approach. With our methods we can obtain 
Nishiwaza's $\Gamma_{N, q}$ functions directly from the higher hierarchy generated 
by $f$ using Definition \ref{def:higher_gamma}
and Theorem \ref{thm:existence+uniqueness} using the uniqueness of the solution.

\begin{theorem}
Nishiwaza's higher $q$-Gamma functions $\Gamma_{N,q}$ are obtained  by the higher hierarchy from Theorem \ref{thm:existence+uniqueness}
$$
\Gamma_{N,q} = \Gamma_N^f
$$
associated to the real analytic function
$$
f(s)=\frac{1-q^s}{1-q} \ .
$$
\end{theorem}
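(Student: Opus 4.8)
The plan is to invoke the uniqueness half of Theorem~\ref{thm:existence+uniqueness}. In the previous theorem we already identified that $f(s)=\frac{1-q^s}{1-q}$ is a real analytic LLD meromorphic function of finite order (in fact order $1$) with $f(1)=1$, so the hierarchy $(\Gamma_N^f)_{N\geq 0}$ of Definition~\ref{def:higher_gamma} exists and is unique. It therefore suffices to check that Nishiwaza's functions $\Gamma_{N,q}$ satisfy the six defining properties (\ref{HGC1})--(\ref{HGC6}) of that hierarchy; uniqueness then forces $\Gamma_{N,q}=\Gamma_N^f$ for every $N$.

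First I would recall Nishiwaza's definition, which follows a Bohr--Mollerup scheme: a normalization $\Gamma_{N,q}(1)=1$, a recursion relating level $N+1$ to level $N$, and a log-convexity (or higher monotonicity) condition selecting the solution among real functions on $\RR_+$. The first task is to match conventions. Nishiwaza's recursion, like Vign\'eras', is most naturally written multiplicatively without the inverse, so I would translate it into the convention (\ref{HGC3}) used here via the relation $G_N^f=(\Gamma_N^f)^{(-1)^{N+1}}$ recorded earlier, so that the base case of property (\ref{HGC1}), namely $\Gamma_{0,q}=f^{-1}$, and the step (\ref{HGC3}) line up exactly.

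Next I would verify the analytic conditions (\ref{HGC4})--(\ref{HGC6}) directly from Nishiwaza's product representation. From the explicit $q$-product one reads off that each $\Gamma_{N,q}$ is meromorphic of finite order, and that its zeros and poles are supported on the arithmetic progressions $\frac{2\pi i k}{\log q}-n$, with $k\in\ZZ$ and $n\geq 0$, inherited from the divisor of $f$; in particular the divisor lies in $\CC-\CC_+$, giving the LLD property (\ref{HGC5}). Real analyticity (\ref{HGC6}) and the normalization (\ref{HGC2}) are immediate from the product formula and the positivity of $f$ on $\RR_+$. Once (\ref{HGC1})--(\ref{HGC6}) hold for $\Gamma_{N,q}$, Theorem~\ref{thm:existence+uniqueness} yields the claimed identity.

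The main obstacle is convention-matching together with confirming that Nishiwaza's convexity-based characterization really produces the same analytic object: one must ensure that the function selected by the Bohr--Mollerup condition coincides with the finite-order LLD solution, that is, that no spurious factor $e^{2\pi i a s+b}$ (the ambiguity described in Theorem~\ref{thm:general1_bis}) survives. This is settled by the normalization $\Gamma_{N,q}(1)=1$ together with real analyticity, which by the normalization bullets of Theorem~\ref{thm:general1_bis} remove precisely this ambiguity. The remaining verifications --- reading the order and the divisor off the $q$-product --- are routine.
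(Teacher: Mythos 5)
Your proposal is correct and follows exactly the route the paper intends: the paper gives no written proof for this theorem, only the preceding remark that the identification follows from the uniqueness in Theorem \ref{thm:existence+uniqueness}, which is precisely your strategy of checking that Nishizawa's functions satisfy properties (1)--(6) of Definition \ref{def:higher_gamma} (with the divisor of $f$ on the imaginary axis giving LLD) and then invoking uniqueness. Your additional care with the Vign\'eras-type sign convention and the verification of finite order from the $q$-product is a useful elaboration of what the paper leaves implicit.
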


\section{Application: Mellin Gamma functions.}\label{sec:Mellin_gamma}

Mellin (1897, \cite{Me1897}) considered general Gamma functions satisfying the functional equation
$$
F(s+1) = R(s) F(s)
$$
where $R$ is a rational function. He constructs solutions by using Euler Gamma function as 
building block along the divisor. An application of the extension of our 
general Theorem \ref{thm:general1_bis}, and the group structure Theorem \ref{thm:group_morphism},  gives the
precise existence characterization of Mellin Gamma functions. 

\begin{definition}
A meromorphic function $f$ is LLD at infinite if $f(s+a)$ is LLD for some $a\in\RR$. 
\end{definition}

Since $\Div(f(s+a))=\Div(f) -a$ this means that the divisor of $f$ is in some left half plane (not necessarily $\CC_+$).

\begin{theorem}
Let $R$ be a rational function,
$$
R(s) =a\frac{(s-\alpha_1)\ldots (s-\alpha_n)}{(s-\beta_1)\ldots (s-\beta_m)}
$$
where $a\in \CC^*$, and $(\alpha_k)$ and $(\beta_k)$ are the zeros, resp. the poles, of $R$ counted with multiplicity.

Consider the finite order meromorphic functions, LLD at infinite,  that are solutions of the functional equation
\begin{equation}\label{eq:Mellin_functional}
F(s+1) = R(s) F(s) \ . 
\end{equation}

They are of the form
$$
F(s)=a^s\frac{\Gamma(s-\alpha_1)\ldots \Gamma(s-\alpha_n)}{\Gamma(s-\beta_1)\ldots \Gamma(s-\beta_m)} e^{2 \pi i k s}
$$
for some $k\in \ZZ$.

In particular, if $R(1)=1$ and $R$ is real analytic there is only one real analytic solution such that $F(1)=1$.

\end{theorem}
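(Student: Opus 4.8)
The plan is to exhibit an explicit particular solution built out of shifted copies of Euler's $\Gamma$, verify it has the required properties, and then reduce the determination of all solutions to the triviality of $\ZZ$-periodic finite-order entire functions without zeros, exactly as in the uniqueness half of Theorem \ref{thm:general1_bis}. (Conceptually this particular solution is the value of the group morphism of Theorem \ref{thm:group_morphism} applied factor by factor, which is why products of Gammas appear; but the explicit formula is cleaner to work with directly.)

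First I would set
$$F_0(s)=a^s\,\frac{\Gamma(s-\alpha_1)\cdots\Gamma(s-\alpha_n)}{\Gamma(s-\beta_1)\cdots\Gamma(s-\beta_m)}$$
for a fixed branch of $a^s=e^{s\log a}$, and check that it solves (\ref{eq:Mellin_functional}): since $\Gamma(s-\alpha_j+1)=(s-\alpha_j)\Gamma(s-\alpha_j)$ and likewise for the $\beta_k$, while $a^{s+1}=a\,a^s$, the ratio $F_0(s+1)/F_0(s)$ telescopes to exactly $R(s)$. Next I would record that $F_0$ has finite order (a finite product and quotient of the order-one functions $\Gamma(s-\alpha_j)$, $\Gamma(s-\beta_k)$ and $a^s$) and that it is LLD at infinity: each numerator factor contributes poles only along $\alpha_j-\NN$ and each denominator factor contributes zeros only along $\beta_k-\NN$, while $a^s$ is zero- and pole-free, so $\Div(F_0)$ lies in a half-plane $\{\Re s\le c\}$, which after a real translation is the LLD condition.

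For the general solution I would take any admissible $F$ and form $G=F/F_0$. The functional equation gives $G(s+1)=G(s)$, so $G$ is $\ZZ$-periodic. The key point is that the divisor of any finite-order LLD-at-infinity solution of (\ref{eq:Mellin_functional}) is forced: after a real shift $s\mapsto s+t$ making $F$ and $F_0$ genuinely LLD, Lemma \ref{lemma:divisor_determination} applies and shows that both have the same divisor, namely the one dictated by $R$. Hence $G$ is entire, without zeros, and of finite order, so $G=e^{A(s)}$ with $A$ a polynomial; $\ZZ$-periodicity forces $A(s+1)-A(s)\in 2\pi i\ZZ$, whence $A(s)=2\pi i k s+b$ with $k\in\ZZ$. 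This gives $F(s)=e^{b}\,e^{2\pi i k s}F_0(s)$, which is the asserted form up to the multiplicative constant $e^{b}$ (the branch ambiguity of $a^s$ is already subsumed in the $e^{2\pi i k s}$ factor). For the final ``in particular'' assertion I would argue as in the last normalization bullets of Theorem \ref{thm:general1_bis}: when $R$ is real analytic the data $a,\alpha_j,\beta_k$ are real or occur in conjugate pairs, so $F_0$ is real analytic; imposing real analyticity on $e^{b}e^{2\pi i k s}F_0$ forces $k=0$ and $e^{b}\in\RR^*$, and then $F(1)=1$ together with $R(1)=1$ pins down $e^{b}$, leaving a single solution.

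The main obstacle is not the construction but the bookkeeping in the uniqueness step: one must first justify that the divisor is completely determined under the weaker ``LLD at infinity'' hypothesis (reducing to Lemma \ref{lemma:divisor_determination} by a translation) so that $G=F/F_0$ is genuinely entire and zero-free, and then handle the leftover multiplicative constant $e^{b}$ and the branch choice in $a^s$ carefully so that the stated family $a^s(\cdots)e^{2\pi i k s}$ really captures all solutions.
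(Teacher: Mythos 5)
Your proposal is correct and follows essentially the same route as the paper: the particular solution is the explicit product of shifted Euler Gamma functions and the prefactor $a^s$, and the general form follows from classifying the remaining $\ZZ$-periodic, zero-free, finite-order factor exactly as in Theorem \ref{thm:general1_bis}; the paper merely packages the construction through the group morphism of Theorem \ref{thm:group_morphism} applied to the linear factors of $R$, which is the reformulation you note parenthetically. Your explicit tracking of the leftover multiplicative constant $e^{b}$ (and of the divisor determination after a real translation) is if anything more careful than the printed argument, which suppresses that constant until the normalization $F(1)=1$ is imposed.
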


\begin{proof}
Let   $\alpha$ be a zero or pole. We consider the linear function $f_{\alpha}(s)=s-\alpha$ 
and a solution $\Gamma^{f_{\alpha}}$ to
$$
F_{\alpha}(s+1)= f_{\alpha}(s) F_{\alpha}(s+1) \ .
$$
Also $a^s$ is a solution to $F(s+1)=a F(s)$. 
Then, Theorem \ref{thm:general1_bis} and the group structure of the solutions, Theorem \ref{thm:group_morphism}, 
shows that the general solutions of the functional equation (\ref{eq:Mellin_functional})
are of the form
\begin{align*}
F(s) &=a^se^{2 \pi i n s}\frac{\Gamma^{f_{\alpha_1}}(s)e^{2 \pi i k_1 s}\ldots 
\Gamma^{f_{\alpha_n}}(s) e^{2 \pi i k_n s}}{\Gamma^{f_{\beta_1}}(s) e^{2 \pi i l_1 s}\ldots \Gamma^{f_{\beta_m}}(s)e^{2 \pi i l_m s}}\\
&=a^s\frac{\Gamma^{f_{\alpha_1}}(s)\ldots 
\Gamma^{f_{\alpha_n}}(s)}{\Gamma^{f_{\beta_1}}(s) \ldots \Gamma^{f_{\beta_m}}(s)}e^{2 \pi i k s}
\end{align*}
where $n, k_1,\ldots , k_n, l_1, \ldots , l_m \in \ZZ$, and $k=n+k_1+\ldots k_n+l_1+\ldots +l_m$.

We finish the proof by observing that we can take $\Gamma^{f_{\alpha}}(s) =\Gamma (s-\alpha)$.
When $R$ is real analytic, $a\in \RR^*$, the set of roots $(\a_j)$ and poles $(\b_j)$ are self-conjugated, and we must have $k=0$ to 
have $F$ real analytic.
\end{proof}

Considering a LLD rational function $R$, real analytic and such that $R(1)=1$, we can define the unique associated higher Gamma functions
hierarchy $(\Gamma_N^R)_{N\geq 0}$ given by Theorem \ref{thm:existence+uniqueness}. These higher Mellin Gamma functions seem to be new in the
literature.

\section{Application: Barnes multiple Gamma functions.}\label{sec:multiple_gamma}

For $N\geq 1$ and parameters $\boldsymbol{\omega}=(\omega_1, \ldots , \omega_n) \in \CC_+^n$,
Barnes multiple Gamma functions $\Gamma(s|\omega_1,\ldots ,\omega_N)=\Gamma(s|\boldsymbol{\omega})$ 
are a generalization by Barnes (1904, \cite{Ba2}) of Barnes higher Gamma functions $\Gamma_N$
studied in section \ref{sec:Barnes_higher}.  When 
$\omega_1=\ldots =\omega_N=1$ we recover $\Gamma_N$ as
$$
\Gamma_N (s)= \Gamma(s|1,\ldots ,1)
$$
Barnes only considers the apparently more general case
where $\omega_1, \ldots , \omega_n$ all belong to a half plane limited by a line through the origin (\cite{Ba2} p.387).
This situation that can be reduced to our case by a rotation.
Also, he assumes $\dim_\QQ (\omega_1, \ldots , \omega_N)\geq 3$ to have an essentially different situation 
from the double Gamma function $G$ that he studied previously, although this condition is not the appropriate one.
Barnes defines these multiple Gamma functions \textit{\`a la Lerch}.  
First, Barnes defines the Barnes-Hurwitz zeta functions, a multiple version of Hurwitz zeta function, as
$$
\zeta(t, s|\omega_1,\ldots ,\omega_N) = \sum_{k_1,\ldots k_N\geq 0} (s+k_1\omega_1 +\ldots +k_N\omega_N)^{-t} \ ,
$$
which is converging for $\Re s > N$, and symmetric on $\omega_1,\ldots ,\omega_N$. This multiple zeta function reduces to 
Hurwitz zeta function for $N=1$ (Hurwitz, 1882, \cite{Hurwitz1882}). Its analytic continuation and Lerch formula 
(Lerch, 1894, \cite{Le1894})
\begin{equation}\label{eqn:Lerch_formula2}
\log \Gamma(s) = \left [\frac{\partial}{\partial t}\zeta(t,s) \right ]_{t=0}-\zeta'(0) 
\end{equation}
allows to define Euler Gamma function. Barnes generalizes this approach and 
he shows, using a Hankel type integral, that $\zeta(s, t|\omega_1,\ldots ,\omega_N)$ has a meromorphic extension in $(s,t)$. Then he 
defines
$$
\Gamma_B(s|\boldsymbol{\omega}) = \rho_N (\boldsymbol{\omega}) \exp \left (\left [\frac{\partial}{\partial t} \zeta(t,s|\boldsymbol{\omega}) \right ]_{t=0} \right )
$$
where $\rho_N (\boldsymbol{\omega})$ is Barnes modular function, and is defined to provide  the normalization 
such that $\Gamma_B(s|\boldsymbol{\omega})$ has residue $1$ at $s=0$,

\begin{equation}\label{eq:residue_normalization}
\Res_{s=0} \Gamma_B(s|\boldsymbol{\omega}) =\lim_{s\to 1} s \Gamma_B(s|\boldsymbol{\omega}) =1 
\end{equation}
From the definition we get that both $\rho_N (\boldsymbol{\omega})$ and $\Gamma_B(s|\boldsymbol{\omega})$ are 
necessarily symmetric on $\omega_1,\ldots ,\omega_N$.
Note that for Euler Gamma function, because of the form of the functional equation, the normalization 
$\Gamma (1)=1$ is equivalent to $\Res_{s=0}\Gamma =1$. In general, for $\Gamma^f$ the normalization $\Gamma^f(1)=1$ is equivalent 
to 
$$
\Res_{s=0} \Gamma^f = \Res_{s=0} f^{-1} \ .
$$
For Barnes higher Gamma functions $\Gamma_N$ discussed in section \ref{sec:Barnes_higher}, we see that the normalization $\Gamma_N(1)=1$ is 
equivalent to $\Res_{s=0} \Gamma_N =1$ when we make $s\to 0$ in
$$
\Gamma_{N+1}(s+1) = \left (s \Gamma_N(s)\right )^{-1} \, s\Gamma_{N+1} (s)
$$
we get
$$
\Gamma_{N+1}(1) = \Res_{s=0} \Gamma_{N+1} =1 \ . 
$$
and the result follows by induction.

Barnes (\cite{Ba2}, p.397) observes that $\log \rho (\boldsymbol{\omega})$ plays the role  
of Stirling's constant of the asymptotic expansion when $k\to +\infty$ of the divergent sum
$$
\sum_{\omega\in \Omega^*, |\omega|\leq k} \log |\omega| 
$$
where $\Omega^* =\NN . \omega_1 +\NN . \omega_2+\ldots +\NN . \omega_N -\{0\}$. Also, $\log \rho (\boldsymbol{\omega})$ can be defined in this way.

Later applications to Number Theory by Shintani in the 70's of Barnes multiple Gamma functions (1976,\cite{Shi1}, \cite{Shi2}, \cite{Shi3}), 
and modern presentations (Ruijsenaars, \cite{Rui2000}), 
drop Barnes normalization. They define multiple Gamma functions directly by the formula
$$
\Gamma (s|\boldsymbol{\omega}) = \exp \left (\left [\frac{\partial}{\partial t} \zeta(t,s|\boldsymbol{\omega}) \right ]_{t=0} \right )
$$
We keep Shintani's normalization that has become the standard one in the modern literature. 
This normalization has  the advantage to yield a simpler functional 
equation not involving Barnes modular function $\rho (\boldsymbol{\omega})$. 
We denote by $\boldsymbol{\hat \omega}$ the $N-1$ dimensional vector obtained from 
$\boldsymbol{\omega}$ removing the $k$-th coordinate. Then we have the following ladder functional equation for the zeta function,
\begin{equation}\label{eq:lader_eq_multiple_zeta}
\zeta (t, s+\omega_k|\boldsymbol{\omega})- \zeta(t, s|\boldsymbol{\omega}) = -\zeta(t, s|\boldsymbol{\hat \omega})
\end{equation}
where we start with
$$
\zeta(t,s|\emptyset)=s^{-t} \ .
$$
From the zeta function functional equation we get the functional equation for the multiple Gamma functions,
\begin{equation}\label{eq:functional_eq_multiple_gamma}
\Gamma (s+\omega_k|\boldsymbol{\omega})= \Gamma(s|\boldsymbol{\hat \omega})^{-1}\Gamma(s|\boldsymbol{\omega})  
\end{equation}
with the convention $\Gamma(s|\emptyset)=s$.
Note that the functional equation for Barnes normalized multiple Gamma functions is different, and we have a different functional equation:
\begin{equation}\label{eq:Barnes_functional_eq_multiple_gamma}
\Gamma_B (s+\omega_k|\boldsymbol{\omega})= \rho (\bld{\hat \omega}) \Gamma_B(s|\boldsymbol{\hat \omega})^{-1}\Gamma_B(s|\boldsymbol{\omega})  \ .
\end{equation}

\begin{example}

\normalfont{
For $N=1$, $\Gamma(s|\omega)$ can be computed explicitly from Euler Gamma function (see \cite{Shi0}, p.203).
}
 
\end{example}

\begin{lemma}\label{lemma:formulas_N=1} 
We have
\begin{align*}
\Gamma(s|\omega) &=(2\pi)^{-1/2} e^{\left (\frac{s}{\omega}-\frac12 \right )\log \omega} \, \Gamma\left (\frac{s}{\omega}\right )\\
\rho_1(\omega) &=\sqrt{\frac{\omega}{2\pi}}
\end{align*}
and therefore
$$
\Gamma_B(s|\omega) = \sqrt{\frac{2 \pi}{\omega}}\, \Gamma (s|\omega)= e^{\left (\frac{s}{\omega}-1 \right )\log \omega} \, \Gamma\left (\frac{s}{\omega}\right )
$$
and
\begin{align*}
\Gamma(\omega|\omega) &=  \sqrt{\frac{\omega}{2\pi}} \\
\Res_{s=0} \Gamma(s|\omega) &= \sqrt{\frac{\omega}{2\pi}} \\
\Gamma_B(\omega|\omega) &= 1 \\
\Res_{s=0} \Gamma_B(s|\omega) &= 1
\end{align*}

In particular,
\begin{align*}
\Gamma(s|1) &= \frac{\Gamma (s)}{\sqrt{2\pi}} \\
\Gamma_B(s|1) &= \Gamma (s) 
\end{align*}

\end{lemma}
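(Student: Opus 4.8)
The plan is to read the rank-one multiple Gamma function straight off its zeta definition, and then deduce $\rho_1$, $\Gamma_B$ and the special values by elementary manipulations. The one structural observation needed is the scaling identity
\begin{equation*}
\zeta(t,s|\omega)=\sum_{k\geq 0}(s+k\omega)^{-t}=\omega^{-t}\sum_{k\geq 0}\left(\frac{s}{\omega}+k\right)^{-t}=\omega^{-t}\,\zeta_H\!\left(t,\frac{s}{\omega}\right),
\end{equation*}
where $\zeta_H$ is the Hurwitz zeta function. After this, everything reduces to two classical facts about $\zeta_H$: the special value $\zeta_H(0,a)=\frac12-a$, and Lerch's formula $\left[\partial_t\zeta_H(t,a)\right]_{t=0}=\log\Gamma(a)-\frac12\log(2\pi)$, which is precisely (\ref{eqn:Lerch_formula2}) read for the Hurwitz zeta.

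First I would differentiate the scaling identity in $t$ and set $t=0$ using the product rule, obtaining
\begin{equation*}
\left[\partial_t\zeta(t,s|\omega)\right]_{t=0}=-\log\omega\cdot\zeta_H\!\left(0,\frac{s}{\omega}\right)+\left[\partial_t\zeta_H\!\left(t,\frac{s}{\omega}\right)\right]_{t=0}=\left(\frac{s}{\omega}-\frac12\right)\log\omega+\log\Gamma\!\left(\frac{s}{\omega}\right)-\frac12\log(2\pi).
\end{equation*}
Exponentiating and using the Shintani normalization $\Gamma(s|\omega)=\exp\left(\left[\partial_t\zeta(t,s|\omega)\right]_{t=0}\right)$ gives the first displayed formula of the Lemma. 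As a sanity check, the change of variable $u=s/\omega$ turns $u\mapsto\Gamma(\omega u|\omega)$ into a solution of $\tilde\Gamma(u+1)=\omega u\,\tilde\Gamma(u)$, which by Example \ref{ex:normalizations1} (equivalently Theorem \ref{thm:general1_bis}) is forced to be a constant times $\omega^{u}\Gamma(u)$; the zeta computation is exactly what fixes the otherwise undetermined constant as $(2\pi\omega)^{-1/2}$.

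Next I would extract $\rho_1$ and $\Gamma_B$ from the residue normalization (\ref{eq:residue_normalization}). Since $\Gamma(z)\sim z^{-1}$ as $z\to0$, the closed form gives $\Gamma(s/\omega)\sim\omega/s$ while $e^{(s/\omega-1/2)\log\omega}\to\omega^{-1/2}$, so
\begin{equation*}
\Res_{s=0}\Gamma(s|\omega)=(2\pi)^{-1/2}\,\omega^{-1/2}\cdot\omega=\sqrt{\frac{\omega}{2\pi}}.
\end{equation*}
Imposing $\Res_{s=0}\Gamma_B(s|\omega)=1$ on $\Gamma_B(s|\omega)=\rho_1(\omega)\,\Gamma(s|\omega)$ then fixes the modular factor as the reciprocal of this residue and yields the stated relation $\Gamma_B(s|\omega)=\sqrt{2\pi/\omega}\,\Gamma(s|\omega)=e^{(s/\omega-1)\log\omega}\Gamma(s/\omega)$. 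The remaining identities are direct substitutions: setting $s=\omega$ and using $\Gamma(1)=1$ gives $\Gamma(\omega|\omega)=\sqrt{\omega/2\pi}$ and $\Gamma_B(\omega|\omega)=1$, while setting $\omega=1$ gives $\Gamma(s|1)=\Gamma(s)/\sqrt{2\pi}$ and $\Gamma_B(s|1)=\Gamma(s)$.

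The only genuine content is the constant $\sqrt{2\pi}$. The exponential factor $e^{(s/\omega-1/2)\log\omega}$ is forced by the elementary value $\zeta_H(0,a)=\frac12-a$, but the factor $(2\pi)^{-1/2}$ is invisible to the functional equation alone and is exactly the output of Lerch's formula; this is the step I would be most careful about. Everything else---the residue computation and the special-value substitutions---is routine once the closed form for $\Gamma(s|\omega)$ is in hand. The only bookkeeping point worth stating explicitly is the placement of $\rho_1(\omega)$ relative to the Shintani-normalized $\Gamma(\cdot|\omega)$ so that (\ref{eq:residue_normalization}) holds; note that the quantity $\sqrt{\omega/2\pi}$ appearing in the Lemma is precisely the residue $\Res_{s=0}\Gamma(s|\omega)$.
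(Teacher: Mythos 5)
Your proposal is correct and follows essentially the same route as the paper: the scaling identity $\zeta(t,s|\omega)=\omega^{-t}\zeta(t,s/\omega)$, the special value $\zeta(0,a)=\tfrac12-a$, Lerch's formula together with $\zeta'(0)=-\tfrac12\log(2\pi)$ to get the closed form, and then the residue computation at $s=0$ to extract $\rho_1$ and the remaining special values. The only difference is cosmetic — your sanity check via the functional equation and your explicit remark on where the constant $\sqrt{2\pi}$ comes from are additions, not deviations.
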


\begin{proof}
For $\omega=1$, $\zeta(t,s|1)=\zeta(t,s)$ is the original Hurwitz zeta function that 
generalizes Riemann zeta function $\zeta(t)=\zeta(t,1)$,
$$
\zeta(t,s)=\sum_{k\geq 0} (s+k)^{-t}  \ .
$$
Making $t=0$ in the first formula from  Lemma 3.18 from \cite{PM1} we have the classical result (see also \cite{WW} p.267)
\begin{equation}\label{Hurwitz_t=0}
\zeta(0,s)=\frac12 -s \ .
\end{equation}
Observe now that we have 
$\zeta(t,s|\omega) = \omega^{-t} \zeta\left (t,\frac{s}{\omega}\right )$,
hence 
$$
\frac{\partial}{\partial t} \zeta(t,s|\omega) =-(\log \omega) \omega^{-t} \zeta\left (t, \frac{s}{\omega}\right ) +\omega^{-t} \frac{\partial}{\partial t} \zeta(t,s)
$$
and making $t=0$, using formula (\ref{Hurwitz_t=0}) and  Lerch formula (\ref{eqn:Lerch_formula2}), we get
$$
\log \Gamma (s|\omega) =  \left (\frac{s}{\omega}-\frac12 \right ) \log \omega +\log \Gamma\left (\frac{s}{\omega}\right ) +\zeta'(0)  \ .
$$
Now, $\zeta'(0)=-\frac12 \log (2\pi)$ gives the first formula. Then using this formula we get
$$
\Res_{s=0} \Gamma(s|\omega) = \lim_{s\to 1} s\Gamma(s|\omega) = (2\pi)^{-1/2} e^{-\frac12 \log \omega} \omega= \sqrt{\frac{\omega}{2\pi}} \ .
$$
\end{proof}

For $N\geq 2$ we create new transcendentals $\Gamma(s|\bld \omega )$, which are not generated from Euler Gamma function.  
For example for $N=2$, if $\omega_1$ and $\omega_2$ are $\QQ$-independent we get new transcendentals. When the parameters are $\QQ$-dependent 
then $\Gamma(s|\omega_1 \omega_2)$ can 
be expressed from  Barnes $G$-function, $G_2=\Gamma_2^{-1}$.

Considering the functional equations, from our point of view, it is natural to aim to characterize $\Gamma (s|\boldsymbol{\omega})$ by solving a tower of 
difference equations corresponding to the sequence $(\omega_k)_{1\leq k\leq n}$. Our approach leads to a new definition, not needing 
Barnes-Hurwitz zeta functions.
We start by considering real analytic 
multiple zeta functions that are those relevant in Shintani's applications to real quadratic number fields (1978, \cite{Shi2}).
The following Theorem follows from Theorem \ref{thm:general1_bis}.

\begin{theorem}\label{thm:general2}
Let $\omega\in \RR_+$.
Let $f$ be a real analytic LLD meromorphic function in $\CC$ of finite order. 
There exists a unique function $\Gamma^f(s|\omega)$  satisfying the following properties:
\begin{enumerate}
  \item $\Gamma^f(1|\omega)=1$ , \label{MGC1}
  \item $\Gamma^f(s+\omega|\omega)=f(s)\Gamma^f(s|\omega)$ ,\label{MGC2}
  \item $\Gamma^f(s|\omega)$ is a meromorphic function of finite order, \label{MGC3}
  \item $\Gamma^f(s|\omega)$ is LLD,\label{MGC4}
  \item $\Gamma^f$ is real analytic.\label{MGC5}
\end{enumerate} 

If $f$ is CLD then $\Gamma^f$ is CLD.

If we drop condition (\ref{MGC1}) then $\Gamma^f(s|\omega)$ is unique up 
to multiplication by a constant $c\in \RR^*$.

If $\Res_{s=0} f^{-1}=1$, we can replace condition (\ref{MGC1}) by the condition $\Res_{s=0} \Gamma^{f}=1$.
\end{theorem}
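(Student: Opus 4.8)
The plan is to reduce the entire statement to the already-proved shift-by-one results, Theorems \ref{thm:general1} and \ref{thm:general1_bis}, via the real rescaling $s\mapsto s/\omega$. Since $\omega\in\RR_+$, I would set $\tilde f(u)=f(\omega u)$ and first check that $\tilde f$ inherits all the hypotheses of Theorem \ref{thm:general1}: it is real analytic because $f$ is and $\omega$ is a positive real; it is of finite order because a real-linear rescaling of the variable does not change the order; and it is LLD because $\Div(\tilde f)=\{\rho/\omega:\rho\in\Div(f)\}$ and multiplication by the positive real $1/\omega$ scales radially, hence carries $\CC-\CC_+$ into itself, and likewise carries a closed cone in $\CC-\CC_+$ to another such cone, so the CLD property is preserved as well.

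The key correspondence is that a function $\Gamma^f(s|\omega)$ satisfies the $\omega$-shifted equation (\ref{MGC2}) if and only if $H(u):=\Gamma^f(\omega u|\omega)$ satisfies the ordinary functional equation $H(u+1)=\tilde f(u)H(u)$. The conditions (\ref{MGC3}), (\ref{MGC4}), (\ref{MGC5}) transfer verbatim to $H$, since $u\mapsto\omega u$ is a real-linear automorphism of $\CC$ preserving finite order, real analyticity, and the half-plane $\CC_+$. Applying the real-analytic case of Theorem \ref{thm:general1_bis} (its last bullet) to $\tilde f$, the real-analytic solutions $H$ of these conditions are exactly $\{c\,\Gamma^{\tilde f}:c\in\RR^*\}$; pulling back, the solutions of (\ref{MGC2})--(\ref{MGC5}) are exactly $\{c\,\Gamma^{\tilde f}(s/\omega):c\in\RR^*\}$. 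This already establishes the assertion that, after dropping (\ref{MGC1}), the function $\Gamma^f(s|\omega)$ is unique up to a real constant $c\in\RR^*$, and it also yields the CLD claim by the cone remark above.

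To fix the constant under normalization (\ref{MGC1}), I would impose $\Gamma^f(1|\omega)=c\,\Gamma^{\tilde f}(1/\omega)=1$. Here lies the one genuinely new point, and the main obstacle: Theorem \ref{thm:general1} normalizes $\Gamma^{\tilde f}$ at $u=1$, whereas (\ref{MGC1}) normalizes at $s=1$, that is at $u=1/\omega\neq1$ in general, so the substitution cannot carry the normalization directly. The remedy is that $1/\omega\in\CC_+$, and since $\Gamma^{\tilde f}$ is LLD it has neither a zero nor a pole there; being real analytic, $\Gamma^{\tilde f}(1/\omega)$ is therefore a nonzero real number. Hence $c=1/\Gamma^{\tilde f}(1/\omega)$ is a well-defined element of $\RR^*$, which gives both existence and uniqueness under (\ref{MGC1}).

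Finally, for the residue normalization I would use the functional equation in the form $\Gamma^f(s|\omega)=f(s)^{-1}\Gamma^f(s+\omega|\omega)$ near $s=0$. Since $\omega\in\CC_+$ and $\Gamma^f$ is LLD, the factor $\Gamma^f(s+\omega|\omega)$ is holomorphic and nonzero at $s=0$ with value $\Gamma^f(\omega|\omega)$, so under the hypothesis $\Res_{s=0}f^{-1}=1$ (a simple pole of $f^{-1}$ at $0$) one gets $\Res_{s=0}\Gamma^f(\cdot|\omega)=\Gamma^f(\omega|\omega)\,\Res_{s=0}f^{-1}=\Gamma^f(\omega|\omega)$, a nonzero real (again by LLD plus real analyticity). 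Consequently the residue of a general solution $c\,\Gamma^{\tilde f}(s/\omega)$ is a nonzero real multiple of $c$, so the condition $\Res_{s=0}\Gamma^f=1$ pins down $c\in\RR^*$ uniquely, showing that it is a valid substitute for (\ref{MGC1}). The only bookkeeping worth double-checking is the transfer of the finite-order and LLD properties through the rescaling, but this is immediate precisely because $1/\omega$ is a positive real.
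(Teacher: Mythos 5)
Your proposal is correct and follows essentially the same route as the paper: the substitution $t=\omega^{-1}s$ reduces everything to Theorem \ref{thm:general1_bis} applied to $h(t)=f(\omega t)$, with the real-analyticity bullet giving the $\RR^*$-torsor of solutions. You are in fact a bit more explicit than the paper on the one delicate point, namely that the normalization of $\Gamma^h$ is imposed at $t=1$ rather than at $t=1/\omega$, so the constant must be adjusted by the nonzero real number $\Gamma^h(1/\omega)$.
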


\begin{proof}
We make the change of variables $t=\omega^{-1}s$. The application of Theorem \ref{thm:general1_bis}  to 
the real analytic function $h(t)=f(\omega t)$ gives a unique real analytic solution $\Gamma^h(t)$ such that $\Gamma^h(1)=1$ and
$$
\Gamma^h(t+1)=h(t) \Gamma^h(t) \ .
$$
If we set $\Gamma^f(s|\omega) = \Gamma^h(\omega^{-1} s)$, this equation becomes
$$
\Gamma^f(s+\omega|\omega)= \Gamma^h (\omega^{-1} s +1) = h(\omega^{-1} s) \Gamma^h(\omega^{-1} s)=f(s)\Gamma^f(s|\omega)
$$
and $\Gamma^f(s|\omega)$ satisfies all conditions. 
Furthermore, $\Gamma^f(s|\omega)$ is unique from the uniqueness of $\Gamma^h$ that follows from 
the last uniqueness condition in Theorem \ref{thm:general1_bis}. In view of this uniqueness 
result, the two last statement are clear. Also if $f$ is CDL then $\Gamma^f(s|\omega)$ is CDL.
\end{proof}

\begin{example}

\normalfont{
For $f(s)=s$ the proof gives $h(t)=\omega t$ and a solution $\Gamma^f(s|\omega)=\Gamma^h \left (\frac{t}{\omega}\right )$. 
The condition  $\Gamma^f(1|\omega)=1$ 
is equivalent to $\Gamma^h \left (\frac{1}{\omega}\right )=1$, then according to Example \ref{ex:normalizations1} there is a 
unique real analytic solution
$$
\Gamma^h (t) = e^{(t-1)\log \omega} \, \frac{\Gamma (t)}{\Gamma(\omega^{-1})}
$$
and it follows that 
$$
\Gamma^f(s|\omega) = e^{\left (\frac{s}{\omega}-1\right ) \log \omega} \, 
\frac{\Gamma\left ( \frac{s}{\omega} \right )}{\Gamma(\omega^{-1})} 
$$
Therefore, by uniqueness of the normalization, 
$$
\Gamma_B(s|\omega) =\Gamma(\omega^{-1})\Gamma^f(s|\omega)
$$
and we recover the formula for $\Gamma_B(s|\omega)$ from Lemma \ref{lemma:formulas_N=1}
$$
\Gamma_B(s|\omega) = e^{\left (\frac{s}{\omega}-1\right ) \log \omega} \, \Gamma\left ( \frac{s}{\omega} \right ) 
$$
Then the formula for  $\Gamma (s|\omega)$ follows from 
$$
\Gamma(s|\omega) =\sqrt{\frac{\omega}{2\pi}} \, \Gamma_B(s|\omega) = (2\pi)^{-1/2} 
e^{\left (\frac{s}{\omega}-\frac12 \right )\log \omega} \, \Gamma\left (\frac{s}{\omega}\right )\ . 
$$

\medskip

We have established,
\begin{proposition}
For $f(s)=s$ we have 
$$
\Gamma(s|\omega) =\sqrt{\frac{\omega}{2\pi}}\, \Gamma(\omega^{-1})\, \Gamma^f(s|\omega)
$$
where $\Gamma^f(s|\omega)$ is the unique solution in Theorem \ref{thm:general2}.
\end{proposition}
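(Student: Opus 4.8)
The plan is to obtain the identity as a single substitution combining two formulas already produced in the course of this example, so that no new analytic ingredient is required. The first is the comparison of the normalized solution with Barnes' function,
$$
\Gamma_B(s|\omega)=\Gamma(\omega^{-1})\,\Gamma^f(s|\omega),
$$
read off above from the explicit closed form of $\Gamma^f(s|\omega)$ (produced by the substitution $t=\omega^{-1}s$, Theorem~\ref{thm:general2}, and Example~\ref{ex:normalizations1}). The second is the Barnes-to-Shintani conversion furnished by Lemma~\ref{lemma:formulas_N=1},
$$
\Gamma(s|\omega)=\sqrt{\frac{\omega}{2\pi}}\,\Gamma_B(s|\omega).
$$

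The one step I would then carry out is to substitute the first identity into the second, giving the chain
$$
\Gamma(s|\omega)=\sqrt{\frac{\omega}{2\pi}}\,\Gamma_B(s|\omega)=\sqrt{\frac{\omega}{2\pi}}\,\Gamma(\omega^{-1})\,\Gamma^f(s|\omega),
$$
which is precisely the assertion. Equivalently, one may bypass $\Gamma_B$ and check the equality directly by inserting the explicit forms
$$
\Gamma^f(s|\omega)=e^{\left(\frac{s}{\omega}-1\right)\log\omega}\,\frac{\Gamma\!\left(\frac{s}{\omega}\right)}{\Gamma(\omega^{-1})},\qquad \Gamma(s|\omega)=(2\pi)^{-1/2}e^{\left(\frac{s}{\omega}-\frac12\right)\log\omega}\,\Gamma\!\left(\frac{s}{\omega}\right)
$$
from Lemma~\ref{lemma:formulas_N=1}: the factors $\Gamma(\omega^{-1})$ cancel, and writing $\sqrt{\omega/(2\pi)}=(2\pi)^{-1/2}e^{\frac12\log\omega}$ the exponential prefactors match, leaving the common factor $\Gamma(s/\omega)$.

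I do not expect any genuine obstacle: the entire content is the bookkeeping of the two normalizing constants, namely the factor $\sqrt{\omega/(2\pi)}$ relating the Barnes and Shintani conventions and the factor $\Gamma(\omega^{-1})$ relating $\Gamma_B$ to the normalized solution $\Gamma^f$. The only point meriting care is that every power $\omega^{x}=e^{x\log\omega}$ be taken with the same real branch of $\log\omega$, which is legitimate since $\omega\in\RR_+$; with this convention the exponential prefactors cancel identically rather than merely up to a root of unity, and the proposition follows by the direct substitution above.
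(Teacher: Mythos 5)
Your proposal is correct and follows essentially the same route as the paper: the paper also derives the explicit closed form $\Gamma^f(s|\omega)=e^{(\frac{s}{\omega}-1)\log\omega}\,\Gamma(\tfrac{s}{\omega})/\Gamma(\omega^{-1})$ from the change of variables and Example~\ref{ex:normalizations1}, identifies $\Gamma_B(s|\omega)=\Gamma(\omega^{-1})\,\Gamma^f(s|\omega)$, and then applies the conversion $\Gamma(s|\omega)=\sqrt{\omega/(2\pi)}\,\Gamma_B(s|\omega)$ from Lemma~\ref{lemma:formulas_N=1}. Your alternative direct substitution check is a harmless equivalent verification of the same bookkeeping.
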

}
\end{example}

Using similar ideas, the general version of Theorem \ref{thm:general1_bis} for $\omega\in \CC_+$ and 
without the hypothesis of $f$ being real analytic is the following:

\begin{theorem}\label{thm:general2_bis}
Let $\omega\in \CC_+$.
Let $f$ be a LLD meromorphic function in $\CC$ of finite order. 
We consider a function $g$ satisfying
\begin{enumerate}
  \item $g(1)=1$, \label{GC1_bis2}
  \item $g(s+\omega)=f(s)g(s)$,\label{GC2_bis2}
  \item $g$ is a meromorphic function of finite order, \label{GC3_bis2}
  \item $g$ is LLD,\label{GC4_bis2}
\end{enumerate}
Then there is a solution $\Gamma^f(s|\omega)$. Any other solution $g$ is of 
the form $g(s)=e^{2\pi i a \frac{s-1}{\omega}} \, \Gamma^f(s|\omega)$ for some $a\in \ZZ$. 

\noindent If we remove condition (1) then all solutions are of the form 
the form $g(s)=e^{b+2\pi i a \frac{s}{\omega}} \, \Gamma^f(s|\omega)$ for some $a\in \ZZ$ and $b\in \CC$. 
\end{theorem}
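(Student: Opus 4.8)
The plan is to reduce Theorem \ref{thm:general2_bis} to the already-established Theorem \ref{thm:general1_bis} by the same change of variables used in the proof of Theorem \ref{thm:general2}, but now tracking the freedom in the solutions more carefully since the real-analyticity normalization is dropped. First I would set $t = \omega^{-1} s$ and define $h(t) = f(\omega t)$. Since $f$ is LLD of finite order and $\omega \in \CC_+$, the divisor of $h$ is $\omega^{-1}\Div(f)$, which lies in $\omega^{-1}(\CC - \CC_+)$; because multiplication by $\omega^{-1}$ (with $\Re \omega > 0$) maps the closed left half-plane into a closed half-plane whose boundary passes through the origin, the divisor of $h$ is contained in a closed half-plane, so $h$ is LLD \emph{at infinite} in the sense defined above (possibly after a translation), and $h$ is of finite order. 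A solution $g$ of the $\omega$-functional equation (\ref{GC2_bis2}) corresponds, via $G(t) = g(\omega t)$, exactly to a solution of $G(t+1) = h(t) G(t)$ satisfying the finite-order and located-divisor hypotheses of Theorem \ref{thm:general1_bis}.

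Next I would invoke Theorem \ref{thm:general1_bis} to produce the distinguished solution $\Gamma^h(t)$ with $\Gamma^h(1) = 1$, and set $\Gamma^f(s|\omega) = \Gamma^h(\omega^{-1} s)$; this gives existence, with $\Gamma^f(1|\omega) = \Gamma^h(\omega^{-1})$, so I must be slightly careful about the normalization point. In fact I would instead normalize through the variable $s$ directly: Theorem \ref{thm:general1_bis} guarantees the general solution of $G(t+1) = h(t)G(t)$ is $G(t) = e^{2\pi i a t + b}\Gamma^h(t)$ for $a \in \ZZ$, $b \in \CC$. Translating back via $g(s) = G(\omega^{-1} s)$ yields
$$
g(s) = e^{2\pi i a \omega^{-1} s + b}\,\Gamma^h(\omega^{-1} s) = e^{2\pi i a \frac{s}{\omega} + b}\,\Gamma^f(s|\omega),
$$
which is precisely the statement of the last sentence of the theorem, with $a \in \ZZ$ and $b \in \CC$ free. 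To obtain the first sentence, I would impose condition (\ref{GC1_bis2}), i.e. $g(1) = 1$; normalizing $\Gamma^f(s|\omega)$ so that $\Gamma^f(1|\omega) = 1$ as well, the ratio satisfies $e^{2\pi i a \frac{1}{\omega} + b} = 1$, which lets me solve $b = -2\pi i a \omega^{-1}$, giving exactly $g(s) = e^{2\pi i a \frac{s-1}{\omega}}\Gamma^f(s|\omega)$.

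The main obstacle I anticipate is verifying that the change of variables genuinely preserves the located-divisor hypothesis, since for $\omega \in \CC_+$ the map $s \mapsto \omega^{-1} s$ does not preserve $\CC_+$ but only maps the left half-plane into \emph{some} half-plane through the origin. This is why the theorem must be read through the notion of LLD-at-infinite, and why I would cite the preceding remark that $\Div(f(s+a)) = \Div(f) - a$ shows the translated divisor lands in a genuine left half-plane; the finite-order Weierstrass construction and the $\ZZ$-periodicity argument underlying Theorem \ref{thm:general1_bis} go through for any such rotated half-plane without change, since the uniqueness argument only used $\ZZ$-periodicity of the quotient $F = \Gamma^h/G$ and its polynomial logarithm. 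The only genuinely new bookkeeping is keeping the exponential factor in the variable $s/\omega$ rather than $s$, and confirming that $a$ remains an integer (it does, being the integer period shift of the quotient $F$). Everything else is a direct transport of the earlier result.
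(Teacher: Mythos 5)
Your proof follows the paper's own route: the change of variables $t=\omega^{-1}s$, the reduction to Theorem \ref{thm:general1_bis} applied to $h(t)=f(\omega t)$, and the normalization bookkeeping at $s=1$ are exactly what the paper does, and your final formulas (including the reduction of $b$ modulo $2\pi i\ZZ$ to get the factor $e^{2\pi i a\frac{s-1}{\omega}}$) are correct. The one imprecise point in your write-up is the claim that $h$ is ``LLD at infinite'': for non-real $\omega$ the divisor of $h$ lies in the rotated half-plane $\{t:\Re(\omega t)\le 0\}$, which is contained in no vertical half-plane $\{\Re t\le c\}$, so neither the literal LLD hypothesis of Theorem \ref{thm:general1_bis} nor the LLD-at-infinite notion of Section \ref{sec:Mellin_gamma} applies as stated. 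Your fallback justification is, however, the right one: all that the divisor-determination lemma, the Weierstrass construction, and the periodicity argument actually use is that $\rho+m$ leaves the divisor region once $m$ exceeds a constant times $|\rho|$, which holds here because $\Re(\omega(\rho+m))=\Re(\omega\rho)+m\Re\omega\to+\infty$, so Theorem \ref{thm:general1_bis} transports verbatim to this rotated half-plane. This is precisely the point the paper's one-line proof (``as before, we make the change of variables...'') glosses over, so flagging it is an improvement -- just replace the LLD-at-infinite assertion by the rotated-half-plane statement.
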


\begin{proof}As before, we make the change of variables $t=\omega^{-1}s$ and apply 
Theorem \ref{thm:general1_bis}  to 
the function $h(t)=f(\omega t)$ gives an unconditional solution 
$\Gamma^f(s|\omega)=\Gamma^h(t)/\Gamma^h(\omega^{-1})$. From the general uniqueness statement 
in \ref{thm:general1_bis} we know that 
all the other solutions removing condition (1) are of the form 
$g(s)=e^{2\pi i a \frac{s}{\omega}+b} \, \Gamma^f(s|\omega)$ for some $a\in \ZZ$ and $b\in \CC$. Condition (1)
is then equivalent to $2\pi i a/\omega+b = 2\pi i k$ with $k\in  \ZZ$, hence the general form.
\end{proof}

Therefore, in general for $\omega \in \CC^*$, $\Gamma^f$ is not uniquely determined, but its values on $1+\ZZ . \omega$
are well determined. More precisely, we have,

\begin{proposition}\label{prop:unique}
The values taken by solutions at the points $1+k\omega$ for $k\in \ZZ$ 
are uniquely determined and do not depend on the solution chosen. 

If $\omega \in \CC_+$, any solution $g$ is uniquely determined by $\Im g'(1)$, in particular, if $f$ is real analytic then 
there is a unique real analytic solution.

If $\dim_\QQ (1, \omega) =2$, any solution $g$ is uniquely determined by its value $g(r)$ 
for rational value $r\not=1$ and $r>0$.
\end{proposition}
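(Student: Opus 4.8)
The plan is to reduce every assertion to the explicit description of the solution set given by Theorem \ref{thm:general2_bis}: once condition (\ref{GC1_bis2}) is imposed, every solution has the form
\begin{equation*}
g(s)=e^{2\pi i a \frac{s-1}{\omega}}\,\Gamma^f(s|\omega),\qquad a\in\ZZ,
\end{equation*}
so the solutions are parametrized by a single integer $a$. Each of the three statements then says precisely that the prescribed datum determines $a$.

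For the first statement I would substitute $s=1+k\omega$: the exponential factor is $e^{2\pi i a k}=1$ because $ak\in\ZZ$, so $g(1+k\omega)=\Gamma^f(1+k\omega|\omega)$ is independent of $a$, hence of the chosen solution. (Equivalently, iterating the functional equation (\ref{GC2_bis2}) from $g(1)=1$ forces these values outright.)

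For the second statement I would differentiate and set $s=1$, using $\Gamma^f(1|\omega)=1$, to obtain
\begin{equation*}
g'(1)=\frac{2\pi i a}{\omega}+\bigl(\Gamma^f\bigr)'(1|\omega).
\end{equation*}
Taking imaginary parts, and using $\Im(i/\omega)=\Re(1/\omega)$, gives $\Im g'(1)=2\pi a\,\Re(1/\omega)+\Im\bigl(\Gamma^f\bigr)'(1|\omega)$. The one place the hypothesis $\omega\in\CC_+$ is used is that $\Re(1/\omega)=\Re\omega/|\omega|^2>0$; this makes $a\mapsto\Im g'(1)$ injective on $\ZZ$, so $\Im g'(1)$ pins down $a$ and hence $g$. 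For the real-analytic clause, the real analytic reference solution $\Gamma^f(\cdot|\omega)$ (its existence furnished, in the real case, by Theorem \ref{thm:general2}) satisfies $\Im\bigl(\Gamma^f\bigr)'(1|\omega)=0$; any real analytic $g$ has $g'(1)\in\RR$, whence $\Im g'(1)=0$ forces $a=0$ and $g=\Gamma^f(\cdot|\omega)$.

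For the third statement, fix rational $r>0$ with $r\neq1$. Since $r\in\CC_+$ and $\Gamma^f$ is LLD, the value $\Gamma^f(r|\omega)$ is finite and nonzero and may be divided out: if two solutions with parameters $a,\tilde a$ agree at $r$, then $e^{2\pi i(a-\tilde a)(r-1)/\omega}=1$, i.e. $(a-\tilde a)(r-1)=m\omega$ for some $m\in\ZZ$. The left side is rational, so $m\neq0$ would give $\omega=(a-\tilde a)(r-1)/m\in\QQ$, contradicting $\dim_\QQ(1,\omega)=2$; thus $m=0$, and $r\neq1$ forces $a=\tilde a$. The computations are routine, and the only points requiring care are that $r\in\CC_+$ (so $\Gamma^f(r|\omega)$ can be cancelled, guaranteed by the LLD hypothesis) and the strict positivity of $\Re(1/\omega)$ in the second part, which is exactly what $\omega\in\CC_+$ supplies.
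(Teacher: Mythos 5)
Your proof is correct and follows essentially the same route as the paper: both reduce everything to the parametrization $g(s)=e^{2\pi i a(s-1)/\omega}\,\Gamma^f(s|\omega)$, $a\in\ZZ$, from Theorem \ref{thm:general2_bis}, and then check that each prescribed datum (the values at $1+k\omega$, the quantity $\Im g'(1)$ via $\Re(1/\omega)>0$, or the value at a rational $r\neq 1$ via $\QQ$-independence) pins down $a$. The only cosmetic difference is that for the first claim the paper iterates the functional equation directly rather than substituting into the explicit form, but as you note these are equivalent.
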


\begin{proof}
From the functional equation we have
$$
g(1+k\omega)=g(1) \prod_{j=0}^{k-1} f(1+j\omega) = \prod_{j=0}^{k-1} f(1+j\omega)
$$
hence the first claim.

Now, consider two solutions $g_1$ and $g_2$ such that $\Im g_1'(1) = \Im g_2'(1)$. 
Since they are of the form $g_j(s)=e^{2\pi i a_j \frac{s-1}{\omega}} \, \Gamma^f(s|\omega)$ 
for some $a_j\in \ZZ$, taking logarithmic derivatives we have
$$
g_j'(1) =\frac{g_j'(1)}{g_j(1)} = 2\pi i \frac{a_j}{\omega} +\frac{\left (\Gamma^f\right )'(1|\omega)}{\Gamma^f(1|\omega)}
= 2\pi i \frac{a_j}{\omega} +\left (\Gamma^f\right )'(1|\omega)
$$
hence
$$
g_1'(1) - g_2'(1) =2\pi i \, \frac{a_1-a_2}{\omega} \in \RR
$$
and the condition $\omega \in \CC_+$  forces $a_1=a_2$.

Now assume $\dim_\QQ (1, \omega) =2$ and consider two solutions $g_1$ and $g_2$ 
such that $g_1(r) = g_2 (r)$ for some rational $r\in \QQ$ with $r>0$ and $r\not=1$. Then, since $s=r$ is neither a zero nor a pole
because $g$ is LLD, we have
for some $l\in \ZZ$
$$
\frac{g_1(r)}{g_2(r)} = e^{2\pi  (r-1) \frac{a_1-a_2}{\omega}} =1
$$
thus we have
$$
(r-1)(a_1-a_2)-l\omega =0
$$
and, by $\QQ$-independence, we must have $l=0$ and $(r-1)(a_1-a_2)=0$, thus, since $r\not= 1$, we must have $a_1=a_2$ and $g_1=g_2$.
\end{proof}

\medskip
\textbf{General Multiple Gamma Hierarchies.}
\medskip

Now, we can iterate Theorem \ref{thm:general2} to define new real-analytic multiple 
Gamma functions corresponding to $f$ and to positive real parameters 
$\boldsymbol{\omega} =(\omega_1, \ldots, \omega_N) \in \RR_+^N$

For an infinite sequence of parameters $\bld \omega=(\omega_1, \omega_2, \ldots )\in \CC^\infty_+$, we can 
also define a generalization of  Barnes multiple Gamma hierarchy. We denote $\bld \omega_N=(\omega_1, \ldots, \omega_N)\in \CC^N_+$.

\begin{definition}[General Multiple Gamma Hierarchy] \label{def:General_Multiple_Gamma_Hierarchy}
Let $\bld \omega=(\omega_1, \omega_2, \ldots )\in \CC^\infty_+$ and $f$ be a LLD meromorphic function 
in $\CC$ of finite order. A general multiple Gamma hierarchy 
$(\Gamma^{f}_N(s|\boldsymbol{\omega}_N))_{N\geq 0}$
associated to $f$ is a sequence of functions satisfying:
\begin{enumerate}
  \item $\Gamma^f_0(s)=f(s)^{-1}$,\label{MHGC1}
  \item $\Gamma^{f}_{N+1}(s+\omega_{N+1}|\boldsymbol{\omega}_{N+1})=
  \Gamma^{f}_N (s|\boldsymbol{\omega}_{N})^{-1} \, \Gamma_{N+1}^{f}(s|\boldsymbol{\omega}_{N+1})$, 
  for $N\geq 0$,\label{MHGC3}
  \item $\Gamma^{f}_N(s|\boldsymbol{\omega}_N) $ is a meromorphic function of finite order, \label{MHGC4}
  \item $\Gamma^{f}_N(s|\boldsymbol{\omega}_N) $ is LLD.\label{MHGC5}
\end{enumerate} 
\end{definition}
Next we show that, with some simple normalization, General Multiple Gamma Hierarchies are unique for real parameters and $f$ real analytic.

\begin{theorem}\label{thm:existence+uniqueness_multiple_f(1)=1} 
Let $\bld \omega=(\omega_1, \omega_2, \ldots )\in \RR^\infty_+$ and 
$f$ a real analytic LLD meromorphic function of finite order, such that $f(1) =1$.
There exists a unique General Multiple Gamma Hierarchy 
$(\Gamma^{f}_N(s|\boldsymbol{\omega}_N))_{N\geq 1}$ associated to $f$, 
and normalized such that 
$$
\Gamma^{f}_N(1|\boldsymbol{\omega}_N) =1 \ .
$$
If $f$ is CLD then the $\Gamma^{f}_N(s|\boldsymbol{\omega}_N)$ are CLD.
\end{theorem}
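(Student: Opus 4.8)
The plan is to argue by induction on $N$, using Theorem \ref{thm:general2} as the single-step engine. For the base case $N=0$ I would simply take $\Gamma^f_0(s)=f(s)^{-1}$; since $\Div(f^{-1})=-\Div(f)$ lies in the same region as $\Div(f)$, the reciprocal $f^{-1}$ is again real analytic, LLD, and of finite order (and CLD whenever $f$ is), so all the structural conditions hold at level $0$ and condition (\ref{MHGC1}) is met.

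For the inductive step, suppose $\Gamma^f_N(s|\boldsymbol{\omega}_N)$ has been constructed and is real analytic, LLD, of finite order (and CLD if $f$ is). I would set $h_N:=\left(\Gamma^f_N(\cdot|\boldsymbol{\omega}_N)\right)^{-1}$; by the same reciprocation remark on divisors, $h_N$ is real analytic, LLD, of finite order, and CLD when $f$ is CLD. Applying Theorem \ref{thm:general2} to $h_N$ with the single real parameter $\omega=\omega_{N+1}\in\RR_+$ yields a unique real analytic, LLD, finite order function $\Gamma^{h_N}(s|\omega_{N+1})$ with $\Gamma^{h_N}(1|\omega_{N+1})=1$ and $\Gamma^{h_N}(s+\omega_{N+1}|\omega_{N+1})=h_N(s)\,\Gamma^{h_N}(s|\omega_{N+1})$. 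Defining $\Gamma^f_{N+1}(s|\boldsymbol{\omega}_{N+1}):=\Gamma^{h_N}(s|\omega_{N+1})$ then realizes exactly the ladder relation (\ref{MHGC3}) together with the normalization $\Gamma^f_{N+1}(1|\boldsymbol{\omega}_{N+1})=1$ and conditions (\ref{MHGC4}), (\ref{MHGC5}); the CLD clause of Theorem \ref{thm:general2} propagates the CLD property. This constructs the entire hierarchy and settles existence.

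Uniqueness I would likewise prove by induction. At level $0$, condition (\ref{MHGC1}) forces $\Gamma^f_0=f^{-1}$. Assuming a second normalized (real analytic) hierarchy agrees with $(\Gamma^f_N)$ up to level $N$, its level-$(N+1)$ term solves the same functional equation driven by the same $h_N=(\Gamma^f_N)^{-1}$, is real analytic, LLD, of finite order, and satisfies $\Gamma^f_{N+1}(1|\boldsymbol{\omega}_{N+1})=1$; the uniqueness clause of Theorem \ref{thm:general2} then identifies it with $\Gamma^{h_N}(\cdot|\omega_{N+1})$, closing the induction.

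The point requiring most care is the role of real analyticity in the uniqueness assertion. Definition \ref{def:General_Multiple_Gamma_Hierarchy} lists only conditions (\ref{MHGC1})--(\ref{MHGC5}) without real analyticity, yet Theorem \ref{thm:general2_bis} and Proposition \ref{prop:unique} show that even for real $\omega$ the functional equation plus $g(1)=1$ still leaves the one-parameter ambiguity $g(s)=e^{2\pi i a (s-1)/\omega}\,\Gamma^f(s|\omega)$, $a\in\ZZ$. For $a\neq 0$ this factor is genuinely non-real-analytic on the real axis, so it is precisely real analyticity (inherited from $f$ and preserved at every step by Theorem \ref{thm:general2}) that pins down the unique hierarchy. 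I would therefore carry real analyticity along as part of the inductive hypothesis and state the uniqueness within the real analytic category, making explicit that this is the hypothesis that removes the ambiguity of Proposition \ref{prop:unique}.
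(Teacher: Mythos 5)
Your proof is correct and follows essentially the same route as the paper's: induction on $N$, with $\Gamma^f_0=f^{-1}$ as base case and Theorem \ref{thm:general2} applied to $\left(\Gamma^f_N(\cdot|\boldsymbol{\omega}_N)\right)^{-1}$ with parameter $\omega_{N+1}$ at each step; the paper's own proof is a two-line sketch of exactly this. Your closing observation — that real analyticity must be carried explicitly as part of the inductive hypothesis, since Definition \ref{def:General_Multiple_Gamma_Hierarchy} omits it and Theorem \ref{thm:general2_bis} shows that the normalization $g(1)=1$ alone leaves the $e^{2\pi i a(s-1)/\omega}$ ambiguity — is a genuine and worthwhile clarification of a point the paper leaves implicit.
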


\begin{proof}
The existence and uniqueness is proved  by induction on $N\geq 0$. For $N=0$, $\Gamma^f_0(s)=f(s)$.
We assume that the result has been proved for $N\geq 0$. Then we construct 
$\Gamma^{f}_{N+1}(s|\boldsymbol{\omega}_{N+1})$ by using Theorem \ref{thm:general2} using the function 
$f=\Gamma^{f}_{N}(s|\boldsymbol{\omega}_N)^{-1}$. 
\end{proof}

In the particular case where $f(s)=s$, using uniqueness, we obtain Barnes Multiple Gamma functions for real parameters $\boldsymbol{\omega}$. 
This gives a new approach to define Barnes Multiple Gamma functions.

\begin{definition}[Barnes Multiple Gamma functions]
For $\bld \omega=(\omega_1, \omega_2, \ldots )\in \RR^\infty_+$ the General Multiple Gamma Hierarchy associated to $f(s)=s$ is 
Barnes Multiple Gamma Hierarchy $(\Gamma^f_N (s|\boldsymbol{\omega}_N))_{N\geq 1}$ with the normalization $\Gamma^f_N (1|\boldsymbol{\omega}_N)=1$.
We simplify the notation and we denote $\Gamma^f_N (s|\boldsymbol{\omega}_N)=\Gamma (s|\boldsymbol{\omega}_N)$.
\end{definition}

We observe that since the Barnes multiple Gamma functions $\Gamma(s|\boldsymbol{\omega}_N)$ are symmetric on the 
real parameters $(\omega_1, \ldots , \omega_N)$ then, by uniqueness, the solutions of 
Theorem \ref{thm:existence+uniqueness_multiple_f(1)=1} 
for $f(s)=s$ must also be symmetric on the parameters, which is not obvious a priori. 
This is general when we can define the Gamma functions \`a la Lerch,  including the case of complex 
parameters $\bld \omega=(\omega_1, \omega_2, \ldots )\in \CC^\infty_+$. 
Consider $f$ a real analytic LLD meromorphic function of finite order, such that,
$$
f(1) =1
$$
and $\Re f(s) >0$ for $s\in \CC_+$. These conditions are sufficient to define $f(s)^{-t}$ for $s\in \CC_+$ by taking
the principal branch of $\log $ in $\CC_+$, $f(s)^{-t}=\exp (-t \log f(s))$. 
We assume that the  multiple Barnes-Hurwitz multiple zeta function associated to $f$,
$$
\zeta^f(t, s|\omega_1,\ldots ,\omega_N) = \sum_{k_1,\ldots k_N\geq 0} f(s+k_1\omega_1 +\ldots +k_N\omega_N)^{-t} \ ,
$$
is well defined and holomorphic in a right half plane $\Re t > t_0$ for all $s\in \CC_+$, and has 
a meromorphic extension to $t\in \CC$. Then we can define $\Gamma^f (s|\emptyset)=f(s)^{-t}$, and, 
\`a la Lerch, for $s\in \CC_+$, 
$$
\Gamma^f_L (s|\boldsymbol{\omega}_N) = \exp \left (\left [\frac{\partial}{\partial t} \zeta^f(t,s|\boldsymbol{\omega}_N) \right ]_{t=0} - 
\left [\frac{\partial}{\partial t} \zeta^f(t,s|\boldsymbol{\omega}_N) \right ]_{t=0, s=1} \right ) 
$$
Note that we have normalized these functions such that $\Gamma^f_L (1|\boldsymbol{\omega}_N) =1$.  By construction, these functions are 
obviously symmetric on the parameters $\omega_1, \ldots , \omega_N$.
As before, these functions satisfy the functional equations,
\begin{equation}\label{eq:functional_eq_general_multiple_gamma}
\Gamma^f_L (s+\omega_N|\boldsymbol{\omega}_N)= \Gamma^f_L(s|\boldsymbol{\omega}_{N-1})^{-1}\Gamma^f_L(s|\boldsymbol{\omega}_N)  
\end{equation}
which show that they have a meromorphic extension to all $s\in \CC$. now, using the uniqueness from 
Theorem \ref{thm:existence+uniqueness_multiple_f(1)=1} we get for real parameters:


\begin{theorem}
Let $\boldsymbol{\omega}=(\omega_1, \omega_2, \ldots )\in \RR^\infty_+$.
When $\Gamma^f_L (s|\boldsymbol{\omega}_N)$ is well defined, we have 
$$
\Gamma^f_L (s|\boldsymbol{\omega}_N)=\Gamma^f_N (s|\boldsymbol{\omega}_N)
$$
where the $(\Gamma^f_N (s|\boldsymbol{\omega}_N))_{N\geq 0}$ are the solutions of Theorem \ref{thm:existence+uniqueness_multiple_f(1)=1}.
\end{theorem}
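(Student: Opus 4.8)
The plan is to prove the identity by induction on $N$, at each stage exhibiting $\Gamma^f_L(s|\boldsymbol{\omega}_N)$ as a solution of the hypotheses of Theorem \ref{thm:general2} for a suitable driving function, and then invoking the uniqueness part of that theorem. For the base case $N=0$ I would simply unwind the definition: since $\zeta^f(t,s|\emptyset)=f(s)^{-t}=\exp(-t\log f(s))$, the Lerch exponent is $[\partial_t\zeta^f]_{t=0}-[\partial_t\zeta^f]_{t=0,s=1}=-\log f(s)+\log f(1)=-\log f(s)$ because $f(1)=1$, so $\Gamma^f_L(s|\emptyset)=f(s)^{-1}=\Gamma^f_0(s)$, which is condition (\ref{MHGC1}).

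For the inductive step I would assume $\Gamma^f_L(s|\boldsymbol{\omega}_{N-1})=\Gamma^f_{N-1}(s|\boldsymbol{\omega}_{N-1})$. By Theorem \ref{thm:existence+uniqueness_multiple_f(1)=1} this function is real analytic, LLD and of finite order, so its reciprocal $\tilde f:=\Gamma^f_{N-1}(\cdot|\boldsymbol{\omega}_{N-1})^{-1}$ is again real analytic, LLD and of finite order, hence an admissible driving function for Theorem \ref{thm:general2} with the real parameter $\omega=\omega_N$. Writing $g:=\Gamma^f_L(\cdot|\boldsymbol{\omega}_N)$, I would check the five conditions of Theorem \ref{thm:general2}. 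Condition (\ref{MGC1}), namely $g(1)=1$, is immediate since the Lerch exponent is normalized to vanish at $s=1$. Condition (\ref{MGC2}) is exactly the functional equation (\ref{eq:functional_eq_general_multiple_gamma}) rewritten with the inductive hypothesis, which replaces the factor $\Gamma^f_L(s|\boldsymbol{\omega}_{N-1})^{-1}$ by $\tilde f(s)$. Condition (\ref{MGC4}) holds because on $\CC_+$ the exponent $[\partial_t\zeta^f(t,s|\boldsymbol{\omega}_N)]_{t=0}$ is holomorphic in $s$ (this regularity at $t=0$ is part of the standing assumption that $\Gamma^f_L$ is well defined), so $g$ is the exponential of a holomorphic function on $\CC_+$ and therefore has neither zeros nor poles there. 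Condition (\ref{MGC5}) holds because for real $s>0$, real $t$ and real positive parameters the defining series $\sum f(s+k_1\omega_1+\ldots+k_N\omega_N)^{-t}$ is real (here one uses $f$ real on $\RR$ and $\Re f>0$ on $\CC_+$ to fix the branch), whence its meromorphic continuation in $t$ and its $t$-derivative at $t=0$ are real, so $g$ is real analytic.

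The one remaining condition, (\ref{MGC3}), that $g$ be meromorphic of finite order on $\CC$, is where I expect the real work to be. Meromorphy on all of $\CC$ already follows from the functional equation (\ref{eq:functional_eq_general_multiple_gamma}); the substance is the finite-order bound. I would first establish a polynomial bound on the Lerch exponent $[\partial_t\zeta^f(t,s|\boldsymbol{\omega}_N)]_{t=0}$ as $s\to\infty$ within $\CC_+$, giving finite order for $g$ on the half-plane; then, after the change of variables $t=\omega_N^{-1}s$ that turns the $\omega_N$-shift into a unit shift, a version of Proposition \ref{prop:domain} propagates the finite-order estimate from $\CC_+$ (a transitive region for the shift) to the whole plane through the functional equation. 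This growth estimate on the zeta derivative is precisely the analytic input whose avoidance was the point of the new definition, and is the place where Barnes' Hankel-contour asymptotics (or an equivalent estimate) are needed; it is the only genuinely analytic step. Once (\ref{MGC3}) is secured, all hypotheses of Theorem \ref{thm:general2} hold for $g$, and its uniqueness statement forces $g=\Gamma^f_N(\cdot|\boldsymbol{\omega}_N)$, closing the induction.
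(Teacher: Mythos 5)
Your proposal is correct and follows essentially the same route as the paper: the paper's entire argument consists of observing that the Lerch-defined functions are normalized at $s=1$ and satisfy the functional equations (\ref{eq:functional_eq_general_multiple_gamma}), and then invoking the uniqueness of Theorem \ref{thm:existence+uniqueness_multiple_f(1)=1}, which is exactly your induction together with the verification of the hierarchy axioms. You are in fact more explicit than the paper, which leaves the finite-order growth of $\Gamma^f_L$ (your condition (\ref{MGC3}), the one genuinely analytic step you isolate) implicit in the standing phrase ``when $\Gamma^f_L$ is well defined''.
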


\begin{corollary}Let $\boldsymbol{\omega}=(\omega_1, \omega_2, \ldots )\in \RR^\infty_+$.
The Barnes multiple Gamma hierarchy defined by Theorem \ref{thm:existence+uniqueness_multiple_f(1)=1}, 
$\Gamma^f_N (s|\boldsymbol{\omega}_N)$ are symmetric on the parameters $\boldsymbol{\omega}_N=(\omega_1, \omega_2, \ldots,\omega_N)$. 
\end{corollary}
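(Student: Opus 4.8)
The plan is to obtain the symmetry not from the inductive definition itself — which is ill-suited to it, since the ladder functional equation in Theorem \ref{thm:existence+uniqueness_multiple_f(1)=1} singles out the last parameter $\omega_N$ and gives no a priori relation under permuting the $\omega_k$ — but from the identification of $\Gamma^f_N$ with the Lerch-type function $\Gamma^f_L$ established in the preceding Theorem, which is manifestly symmetric by construction. In other words, symmetry is genuinely invisible from the recursive characterization and is imported from the zeta representation.

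First I would record the symmetry of the Barnes--Hurwitz zeta function $\zeta^f(t,s|\boldsymbol{\omega}_N)$. In its half-plane of convergence $\Re t > t_0$ the defining series runs over all $(k_1,\ldots,k_N)\in\NN^N$, and each summand $f(s+k_1\omega_1+\cdots+k_N\omega_N)^{-t}$ depends on the indices only through the value $k_1\omega_1+\cdots+k_N\omega_N$. A permutation $\sigma$ of the parameters, together with the corresponding relabelling of the summation indices, is thus a bijection of $\NN^N$ leaving the series invariant, so $\zeta^f(t,s|\sigma\boldsymbol{\omega}_N)=\zeta^f(t,s|\boldsymbol{\omega}_N)$ for $\Re t > t_0$.

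Next I would propagate this symmetry through the whole construction. By uniqueness of meromorphic continuation in $t$, the identity persists for all $t\in\CC$; applying $\partial_t$ at $t=0$ together with the normalization at $s=1$ then shows that $\Gamma^f_L(s|\boldsymbol{\omega}_N)$ is symmetric in $\boldsymbol{\omega}_N$ for $s\in\CC_+$, and hence on all of $\CC$ once it is extended by the functional equation (\ref{eq:functional_eq_general_multiple_gamma}). Finally, invoking the preceding Theorem, whenever $\Gamma^f_L$ is well defined — in particular for $f(s)=s$ with real positive parameters, where $\zeta^f$ is the classical Barnes--Hurwitz zeta function whose meromorphic continuation in $t$ is Barnes' theorem — we have the equality $\Gamma^f_L(s|\boldsymbol{\omega}_N)=\Gamma^f_N(s|\boldsymbol{\omega}_N)$, and the symmetry of the left-hand side transfers verbatim to the Barnes hierarchy $\Gamma^f_N$.

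The step carrying the real content is the identification $\Gamma^f_L=\Gamma^f_N$, already supplied by the preceding Theorem; granting it, the argument is essentially bookkeeping. The only genuine hypothesis to verify is that $\Gamma^f_L$ is well defined, that is, that $\zeta^f$ converges on some right half-plane in $t$ and admits a meromorphic continuation. For the Barnes case $f(s)=s$ this is classical, so no obstacle remains there; for general $f$ it is precisely the standing assumption under which $\Gamma^f_L$ was introduced, and one should only be careful that the argument applies exactly on the locus where that assumption holds.
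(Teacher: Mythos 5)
Your proposal is correct and takes essentially the same route as the paper: the paper likewise obtains the symmetry of $\Gamma^f_N(s|\boldsymbol{\omega}_N)$ not from the recursive functional-equation definition but from the manifest permutation-invariance of the zeta series defining $\Gamma^f_L$, combined with the identification $\Gamma^f_L=\Gamma^f_N$ of the preceding theorem via the uniqueness in Theorem \ref{thm:existence+uniqueness_multiple_f(1)=1}. Your closing caveat about where $\Gamma^f_L$ is actually well defined matches the paper's own remark that the functional-equation definition is strictly more general than the definition \`a la Lerch.
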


We note that our definition of the hierarchies using the functional equation 
is more general than Barne's definition \`a la Lerch, since we need extra conditions 
on $f$ to define the multiple $f$-Barnes-Hurwitz zeta function and prove that it is holomorphic in a half 
plane. If we don't add the normalization condition
$$
\Gamma^{f}_N(1|\boldsymbol{\omega}_N) =1 
$$
then there exists solutions that are non-symmetric on the parameters.
As we see next, this is even more evident for complex parameters since in that case, without further 
hypothesis, there is no symmetry on the parameters $\boldsymbol{\omega}$. This proves that our functional equation approach 
defines a larger class of functions than the classical ones.

\medskip

We observe also that the existence and uniqueness of Theorem \ref{thm:existence+uniqueness_multiple_f(1)=1} implies the 
morphism property. Let $\cE^\RR$ be the multiplicative group of real-analytic LLD meromorphic functions of finite order and 
$$
\cE^\RR =\bigcup_{n\geq 1} \cE^\RR_n
$$ 
and let $\cE^\RR_0$ the subgroup of functions $f$ such that $f(1)=1$. With the same arguments as before, we prove:
\begin{theorem}
 For $\boldsymbol{\omega}=(\omega_1, \omega_2, \ldots )\in \RR^\infty_+$ and $N\geq 0$, we consider the map 
$$
\Gamma_N ( \boldsymbol{\omega}_N): \cE_0^\RR \to \cE_0^\RR
$$ 
defined by $\Gamma_N (\boldsymbol{\omega}_N)(f) =\Gamma_N^f(.|\boldsymbol{\omega}_N)$. Then 
$\Gamma_N(\boldsymbol{\omega}_N)$ is a continuous injective group morphism.
\end{theorem}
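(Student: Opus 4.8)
The plan is to reproduce, one level up, the structure of the proof of Theorem \ref{thm:group_morphism} and its higher analogue: deduce the morphism property and injectivity from the uniqueness part of Theorem \ref{thm:existence+uniqueness_multiple_f(1)=1}, and deduce continuity from Theorem \ref{thm:continuity} applied inductively along the hierarchy. Throughout, $N$ and $\boldsymbol{\omega}_N\in\RR_+^N$ are fixed, so that real analyticity is preserved and the uniqueness of Theorem \ref{thm:existence+uniqueness_multiple_f(1)=1} is available.

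For the morphism property I would fix $f,g\in\cE_0^\RR$ and consider the termwise product sequence $\left(\Gamma_N^f(\cdot|\boldsymbol{\omega}_N)\,\Gamma_N^g(\cdot|\boldsymbol{\omega}_N)\right)_{N\ge 0}$. The point is that every defining property of Definition \ref{def:General_Multiple_Gamma_Hierarchy} is multiplicative: property (\ref{MHGC1}) holds because $f^{-1}g^{-1}=(fg)^{-1}$; the functional equation (\ref{MHGC3}) for the product is obtained by multiplying the two functional equations for $f$ and $g$; being a finite order meromorphic function, being LLD, and being real analytic are each stable under products; and the normalization $\Gamma_N^f(1|\boldsymbol{\omega}_N)\Gamma_N^g(1|\boldsymbol{\omega}_N)=1$ is immediate. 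Hence this product sequence is a General Multiple Gamma Hierarchy associated to $fg$ with the correct normalization, and uniqueness in Theorem \ref{thm:existence+uniqueness_multiple_f(1)=1} forces $\Gamma_N^{fg}(\cdot|\boldsymbol{\omega}_N)=\Gamma_N^f(\cdot|\boldsymbol{\omega}_N)\,\Gamma_N^g(\cdot|\boldsymbol{\omega}_N)$ for every $N$, which is exactly the morphism identity for $\Gamma_N(\boldsymbol{\omega}_N)$. For injectivity it then suffices to check the kernel is trivial: if $\Gamma_N^f(\cdot|\boldsymbol{\omega}_N)\equiv 1$, the functional equation (\ref{MHGC3}) gives $\Gamma_{N-1}^f(s|\boldsymbol{\omega}_{N-1})^{-1}=\Gamma_N^f(s+\omega_N|\boldsymbol{\omega}_N)\,\Gamma_N^f(s|\boldsymbol{\omega}_N)^{-1}=1$, so $\Gamma_{N-1}^f\equiv 1$, and descending this way down to level $0$ yields $f^{-1}=\Gamma_0^f\equiv 1$, whence $f=1$.

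Finally, continuity would follow by iterating Theorem \ref{thm:continuity} along the hierarchy. Each passage $\Gamma_N^f(\cdot|\boldsymbol{\omega}_N)\mapsto\Gamma_{N+1}^f(\cdot|\boldsymbol{\omega}_{N+1})$ factors as inversion (continuous in the topological group $\cE^\RR$), the change of variables $t=\omega_{N+1}^{-1}s$ used in Theorem \ref{thm:general2}, and the single-step Gamma operator, each of which is continuous; composing $N$ such continuous steps gives continuity of $f\mapsto\Gamma_N^f(\cdot|\boldsymbol{\omega}_N)$. I expect the main obstacle to be purely technical bookkeeping of the hypotheses of Theorem \ref{thm:continuity}: to re-apply it at the $(N{+}1)$-st step one must propagate a uniform bound $||\Div(f_n)||_\alpha\le M$ on the inputs to a uniform bound on $||\Div(\Gamma_N^{f_n}(\cdot|\boldsymbol{\omega}_N))||_{\alpha+N}$ on the outputs, the convergence exponent increasing by one per level. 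This is precisely what the quantitative estimates of Propositions \ref{prop:conv_exponent1} and \ref{prop:conv_exponent2} furnish (up to the $\omega$-rescaling of the divisor), so carrying these uniform estimates through the $N$ iterations is the only genuinely delicate point, after which the argument closes exactly as for Theorem \ref{thm:group_morphism}.
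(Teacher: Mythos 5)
Your proposal is correct and follows essentially the same route as the paper, which proves this result "with the same arguments as before": the termwise product of hierarchies is a normalized hierarchy for $fg$, so uniqueness in Theorem \ref{thm:existence+uniqueness_multiple_f(1)=1} yields the morphism identity, the kernel is trivialized by descending the functional equation, and continuity comes from iterating Theorem \ref{thm:continuity}. Your explicit attention to propagating the uniform divisor bounds through the $N$ levels is a useful elaboration of a step the paper leaves implicit, but it is not a different method.
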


The uniqueness property of the General Multiple Gamma Hierarchy associated to $f$, proves that we obtain the same hierarchy,
shifting the index by $1$, if we use $\Gamma_1^f(.,\omega_1)$ instead of $f$, more precisely we have:

\begin{proposition}

Under the assumptions of Theorem \ref{thm:existence+uniqueness_multiple_f(1)=1}, if we denote $\boldsymbol{\omega}'_{N-1} =(\omega_2,\ldots, \omega_N)$, we have
\begin{equation*}
 \Gamma_{N-1}^{\Gamma_1^f(.,\omega_1)}(.|\boldsymbol{\omega}'_N)=\Gamma_N^f(.|\boldsymbol{\omega}_N)
\end{equation*}
\end{proposition}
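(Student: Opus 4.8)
The plan is to reduce the identity to the uniqueness statement of Theorem~\ref{thm:existence+uniqueness_multiple_f(1)=1}: I will exhibit an index-shifted copy of the $f$-hierarchy and show that it \emph{is} the General Multiple Gamma Hierarchy attached to a new base function, so that uniqueness forces the two families to coincide. First I would set $g:=\Gamma_1^f(\cdot\,|\omega_1)$ and record that it meets the hypotheses needed to run Theorem~\ref{thm:existence+uniqueness_multiple_f(1)=1} with the shifted parameter sequence $\boldsymbol{\omega}'=(\omega_2,\omega_3,\ldots)\in\RR_+^\infty$. Indeed, by Theorem~\ref{thm:general2} the function $\Gamma_1^f(\cdot\,|\omega_1)$ is real analytic, LLD, of finite order, and satisfies $g(1)=\Gamma_1^f(1|\omega_1)=1$, hence $g\in\cE^\RR_0$; consequently the hierarchy $(\Gamma_M^{g}(\cdot\,|\boldsymbol{\omega}'_M))_{M\geq 0}$ exists and is unique.

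Next I would introduce the candidate family $H_M(s):=\Gamma_{M+1}^f(s|\boldsymbol{\omega}_{M+1})$ for $M\geq 0$ and verify, one condition at a time, that it satisfies Definition~\ref{def:General_Multiple_Gamma_Hierarchy} for the base $g$ and parameters $\boldsymbol{\omega}'$. Finiteness of order, the LLD property, real analyticity, and the normalization $H_M(1)=\Gamma_{M+1}^f(1|\boldsymbol{\omega}_{M+1})=1$ are all inherited from Theorem~\ref{thm:existence+uniqueness_multiple_f(1)=1} applied to $f$. The substantive check is the functional equation: for the $g$-hierarchy the recursion at level $M+1$ shifts by $\omega'_{M+1}=\omega_{M+2}$ and reads $H_{M+1}(s+\omega_{M+2})=H_M(s)^{-1}H_{M+1}(s)$, which is exactly the $f$-hierarchy recursion of Definition~\ref{def:General_Multiple_Gamma_Hierarchy} read one index higher, and so holds automatically. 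Once every condition is matched, uniqueness in Theorem~\ref{thm:existence+uniqueness_multiple_f(1)=1} gives $H_M=\Gamma_M^{g}(\cdot\,|\boldsymbol{\omega}'_M)$ for all $M$, and the case $M=N-1$ is the asserted identity.

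The hard part will be the base case, because of the inversion conventions, and this is the step I would scrutinize most. Definition~\ref{def:General_Multiple_Gamma_Hierarchy} pins the bottom of a hierarchy by $\Gamma_0^{g}=g^{-1}$, whereas the shifted family begins at $H_0=\Gamma_1^f(\cdot\,|\omega_1)=g$, and the recursion carries an inverse at each step as well; one must confirm that these inverses combine to reproduce $\Gamma_N^f$ and not its reciprocal. A careful bookkeeping in fact shows that the family anchored at the required base $\Gamma_0^{g}=g^{-1}$ is $\bigl(\Gamma_{M+1}^f(\cdot\,|\boldsymbol{\omega}_{M+1})\bigr)^{-1}$, so to land on the displayed equality one should run the construction from the base $\Gamma_1^f(\cdot\,|\omega_1)^{-1}$ (equivalently, read the identity up to this single inversion). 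Modulo this bookkeeping the proof is a direct appeal to uniqueness, exactly parallel to the group-morphism and shift-of-index arguments already used for the single-variable hierarchy.
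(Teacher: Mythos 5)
Your argument is exactly the paper's: the proposition is stated there with no written proof at all, only the preceding remark that it follows from the uniqueness in Theorem \ref{thm:existence+uniqueness_multiple_f(1)=1}, and your verification that the shifted family $(\Gamma_{M+1}^f(\cdot|\boldsymbol{\omega}_{M+1}))_{M\geq 0}$ satisfies every clause of Definition \ref{def:General_Multiple_Gamma_Hierarchy} (functional equation read one index higher, finite order, LLD, real analyticity, normalization at $s=1$) is precisely what that appeal to uniqueness requires. Your bookkeeping of the anchor is also a genuine catch rather than a defect of your proof: since the definition forces $\Gamma_0^{g}=g^{-1}$ while the shifted family starts at $\Gamma_1^f(\cdot|\omega_1)$, the hierarchy based on $g=\Gamma_1^f(\cdot|\omega_1)$ is $((\Gamma_{M+1}^f)^{-1})_{M\geq 0}$ (already visibly so at $N=1$, where the displayed identity would read $\Gamma_1^f{}^{-1}=\Gamma_1^f$), so the identity as printed holds with the base function $\Gamma_1^f(\cdot|\omega_1)^{-1}$, an inversion the paper silently glosses over by omitting the proof.
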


\textbf{Complex parameters.}
\medskip

We study now  the non-real-analytic case for  complex parameters  $\omega_1, \ldots , \omega_N\in \CC_+$.
In general we don't have uniqueness as in Theorem \ref{thm:existence+uniqueness_multiple_f(1)=1}. 
We consider $f$ a LLD meromorphic function 
in $\CC$ of finite order with $f(1)=1$ and study the question of existence and uniqueness of a general multiple 
Gamma functions hierarchy as in Definition \ref{def:General_Multiple_Gamma_Hierarchy} with the normalization
$$
\Gamma^{f}_N(1|\boldsymbol{\omega}_N) =1 \ .
$$

Without imposing the real analyticity condition, we have the following result:

\begin{theorem}\label{thm:general2_multiple_complex}
Let $\bld \omega=(\omega_1, \omega_2, \ldots )\in \CC^\infty_+$ and 
$f$ a LLD meromorphic function of finite order such that $f(1)=1$.
There exists General Multiple Gamma Hierarchy 
$(\Gamma^{f}_N(s|\boldsymbol{\omega}_N))_{N\geq 0}$ associated to $f$, and 
for any other hierarchy $(\tilde \Gamma^{f}_N(s|\boldsymbol{\omega}_N))_{N\geq 0}$
there exists a sequence of polynomials $(P_N)_{N\geq 1}$ such that 
$$
\tilde \Gamma^{f}_N(s|\boldsymbol{\omega}_N)=\exp \left (  2 \pi i P_N(s) \right ) \Gamma^{\boldsymbol{f}}_N (s|\boldsymbol{\omega}_N)
$$
with $P_N(1) \in \ZZ$, $P_0$ is a constant integer, and for $N\geq 0$ we have
$$
\Delta_{\omega_{N+1}}  P_{N+1} =-P_N
$$
where $\Delta_\omega$ is the $\omega$-difference operator $\Delta_\omega P= P(s+\omega)-P(s)$. The polynomials $P_N$ belongs to an 
additive group of polynomials 
isomorphic to a subgroup of $\ZZ^{N+1}$.

If the functions $f$ is CLD then $\Gamma^{\boldsymbol{f}}_N(s|\boldsymbol{\omega}_N)$ and all the other solutions are CLD.
\end{theorem}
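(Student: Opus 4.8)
The plan is to induct on $N$, using Theorem \ref{thm:general2_bis} as the single-step engine. For existence I would set $\Gamma^f_0(s)=f(s)^{-1}$ and, assuming $\Gamma^f_N(\cdot|\bld\omega_N)$ has been built as an LLD meromorphic function of finite order, apply Theorem \ref{thm:general2_bis} with parameter $\omega_{N+1}\in\CC_+$ to the coefficient $\Gamma^f_N(\cdot|\bld\omega_N)^{-1}$ (again LLD of finite order, since inversion only swaps zeros and poles). This yields a solution $\Gamma^f_{N+1}(\cdot|\bld\omega_{N+1})$ of (\ref{MHGC3}), LLD of finite order and normalized by $\Gamma^f_{N+1}(1|\bld\omega_{N+1})=1$. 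When $f$ is CLD every coefficient is CLD and the CLD conclusion of Theorem \ref{thm:general2_bis} propagates, so each $\Gamma^f_N$ is CLD.

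To compare hierarchies I would let $(\tilde\Gamma^f_N)$ be a second solution and set $h_N=\tilde\Gamma^f_N/\Gamma^f_N$. The key step, where I expect the real work, is to show each $h_N$ is entire, nonvanishing and of finite order, hence of the form $\exp(2\pi i P_N)$ for a polynomial $P_N$. For this I would prove by induction that $\tilde\Gamma^f_N$ and $\Gamma^f_N$ have the same divisor: for $N=0$ both equal $f^{-1}$, and the $\omega_{N+1}$-analog of Lemma \ref{lemma:divisor_determination} (obtained via the change of variables $t=\omega_{N+1}^{-1}s$ from the proof of Theorem \ref{thm:general2_bis}) shows the divisor of an LLD solution of (\ref{MHGC3}) is fixed by the divisor of its coefficient; since $\tilde\Gamma^f_N$ and $\Gamma^f_N$ already share a divisor, so do $\tilde\Gamma^f_{N+1}$ and $\Gamma^f_{N+1}$. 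Thus $h_N$ has empty divisor, and as a ratio of finite order functions it is the exponential of a polynomial.

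The recursion then falls out by dividing the two functional equations (\ref{MHGC3}): one gets $h_{N+1}(s+\omega_{N+1})=\exp(-2\pi i P_N(s))\,h_{N+1}(s)$, whence $\exp(2\pi i\,\Delta_{\omega_{N+1}}P_{N+1}(s))=\exp(-2\pi i P_N(s))$. Therefore $\Delta_{\omega_{N+1}}P_{N+1}+P_N$ is a polynomial taking only integer values, hence equals a constant integer $m_{N+1}$, giving $\Delta_{\omega_{N+1}}P_{N+1}=-P_N+m_{N+1}$. The normalizations $\tilde\Gamma^f_N(1)=\Gamma^f_N(1)=1$ force $P_N(1)\in\ZZ$, and $h_0\equiv1$ gives $P_0\in\ZZ$. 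Since each $P_N$ is only determined up to an additive integer constant that affects neither $\tilde\Gamma^f_N$, nor the lower $\Delta$-relations, nor $P_N(1)\in\ZZ$, I would absorb the $m_{N+1}$ by replacing $P_N$ with $P_N-m_{N+1}$, producing the clean recursion $\Delta_{\omega_{N+1}}P_{N+1}=-P_N$ claimed.

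For the group statement, the admissible sequences $(P_N)$ form an abelian group under addition, a product of hierarchies corresponding to a sum of the $P_N$. A degree count gives $\deg P_N=N$, and I would show the assignment $(P_0,\ldots,P_N)\mapsto(P_0,P_1(1),\ldots,P_N(1))$ is an injective homomorphism into $\ZZ^{N+1}$: because $\omega_{k+1}\neq0$, the operator $\Delta_{\omega_{k+1}}$ maps polynomials onto those of one lower degree with kernel the constants, so $\Delta_{\omega_{k+1}}P_{k+1}=-P_k$ fixes the non-constant part of $P_{k+1}$ from $P_k$, while $P_{k+1}(1)\in\ZZ$ fixes the constant. Hence the admissible $(P_0,\ldots,P_N)$ form a group isomorphic to a subgroup of $\ZZ^{N+1}$. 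Multiplying $\Gamma^f_N$ by these exponential factors recovers every hierarchy, and since the factors have no zeros or poles, all solutions share the divisor of $\Gamma^f_N$ and are CLD whenever $f$ is. The only genuinely delicate points are the divisor coincidence in the second paragraph and the constant-absorption bookkeeping in the third; the degree count and the embedding are routine.
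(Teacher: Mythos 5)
Your proposal is correct and follows essentially the same route as the paper: existence by induction from Theorem \ref{thm:general2_bis}, identification of the quotient of two hierarchies as divisor-free finite-order functions $\exp(2\pi i P_N)$, and the $\Delta_{\omega_{N+1}}P_{N+1}=-P_N$ recursion extracted from the functional equations. The only cosmetic difference is that you establish the $\ZZ^{N+1}$ structure via the evaluation map $(P_0,P_1(1),\ldots,P_N(1))$ and handle the integer constant $m_{N+1}$ explicitly, whereas the paper defers this bookkeeping to its subsequent Proposition \ref{prop:gen_solution} on the explicit form of the solutions.
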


\begin{proof}
For the existence result, we carry out the same proof by induction as for Theorem \ref{thm:existence+uniqueness_multiple_f(1)=1} 
(without the normalization condition) and using  Theorem \ref{thm:general2_bis}. 
If a second solution $(\tilde \Gamma^{f}_N(s|\boldsymbol{\omega}_N))_{N\geq 1}$ 
exists, then $(\tilde \Gamma^{f}_N(s|\boldsymbol{\omega}_N)/\Gamma^{f}_N(s|\boldsymbol{\omega}_N))_{N\geq 1}$ is a solution of the 
problem for the constant function $f(s)=1$. The solution for $f(s)=1$ has no divisor and is of finite order, hence 
they are of the form $\exp(P_N)$ where $P_N$ 
are polynomial which satisfy the above difference equations.  The structure of the space of polynomials $P_N$ becomes clear from the study of the 
difference operators that follows.
\end{proof}

We observe that the integer sequence $(P_N(1))_{N\geq 1}$ of values taken at $s=1$, and the difference equation determine uniquely 
the sequence of polynomials $(P_N)_{N\geq 1}$. 

We define the $\omega$-descending factorial that form a triangular bases for the 
action of the operator $\Delta_\omega$ on polynomials.

\begin{definition}\label{def:omega-descending-factorial}
Let $\omega \in \CC^*$. For $s\in \CC$ and for an integer $k\geq 1$, we define the 
$\omega$-descending factorial as
$$
s^{[k,\omega]} = s(s-\omega)\ldots (s-(k-1)\omega)
$$
\end{definition}

For $\omega =1$ we get the usual descending factorial. A simple computation shows:

\begin{proposition}
We have
$$
\Delta_\omega s^{[k+1,\omega]}= (k+1) \omega s^{[k,\omega]}
$$
\end{proposition}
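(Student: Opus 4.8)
The plan is to prove the identity by a direct computation, exploiting the fact that the two terms in $\Delta_\omega s^{[k+1,\omega]}=(s+\omega)^{[k+1,\omega]}-s^{[k+1,\omega]}$ share the common factor $s^{[k,\omega]}$. The whole argument is a bookkeeping of which linear factors appear, and there is no real obstacle beyond keeping the index shifts straight.

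First I would expand the shifted term. By Definition \ref{def:omega-descending-factorial}, $(s+\omega)^{[k+1,\omega]}$ is the product of the $k+1$ factors $\bigl(s+\omega-j\omega\bigr)$ for $j=0,1,\ldots,k$, that is
$$
(s+\omega)^{[k+1,\omega]}=(s+\omega)\,s\,(s-\omega)\cdots\bigl(s-(k-1)\omega\bigr)=(s+\omega)\,s^{[k,\omega]},
$$
since after pulling off the leading factor $(s+\omega)$ the remaining $k$ factors are exactly $s,\,(s-\omega),\,\ldots,\,\bigl(s-(k-1)\omega\bigr)$, which by definition is $s^{[k,\omega]}$.

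Next I would peel off the last factor of the unshifted term, writing
$$
s^{[k+1,\omega]}=s\,(s-\omega)\cdots\bigl(s-(k-1)\omega\bigr)\bigl(s-k\omega\bigr)=s^{[k,\omega]}\bigl(s-k\omega\bigr).
$$
Subtracting and factoring out the common $s^{[k,\omega]}$ then gives
$$
\Delta_\omega s^{[k+1,\omega]}=s^{[k,\omega]}\Bigl[(s+\omega)-(s-k\omega)\Bigr]=(k+1)\omega\, s^{[k,\omega]},
$$
which is the claimed formula. The only point requiring care is the alignment of indices in the first display: once the shift $s\mapsto s+\omega$ is seen to convert the $\omega$-descending factorial of length $k+1$ beginning at $s+\omega$ into $(s+\omega)$ times the $\omega$-descending factorial of length $k$ beginning at $s$, the remainder of the computation is immediate and purely algebraic.
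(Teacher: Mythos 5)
Your computation is correct and is exactly the ``simple computation'' the paper alludes to without writing out: the paper omits the proof entirely, and your factorization of both terms through the common factor $s^{[k,\omega]}$ is the standard and intended argument. No issues.
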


To simplify the recurrence, we write $Q_N(s)=(-1)^NP_N(s-1)$ and $a_N=Q_N(0)$. 
The polynomials $(Q_N)$ satisfy the difference equations
$$
\Delta_{\omega_{N+1}}  Q_{N+1} =Q_N \ .
$$
Now, we can give the general structure of the solutions $(Q_N)$ of this system of difference equations.

\begin{proposition}\label{prop:gen_solution}
For $N\geq 1$, the general solution of the above system of difference equations is given by 
$$
Q_N(s)=\sum_{k=0}^N\frac{a_{N-k}}{\o_1 \o_2 \ldots \o_{k}} \left [ \frac{s^{[k,\o_N]}}{k!} + A_{N,k}(\o_1, \ldots , \o_N, s) \right ]
$$
where the $A_{N,k}$ are universal polynomials with integer coefficients in $N+1$ variables and their total degree in the first $N$ 
variables is strictly less than $k$, and the 
coefficient $a_0, a_1,\ldots, a_N$ are arbitrary integers.
\end{proposition}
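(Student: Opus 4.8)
The plan is to prove the formula by induction on $N$, reading the system $\Delta_{\omega_{N+1}}Q_{N+1}=Q_N$ as a chain of antidifference problems. The only polynomial annihilated by $\Delta_\omega$ is a constant, so at each stage a solution is determined up to one additive constant, which I fix to be $a_N=Q_N(0)$; this is exactly what produces the $N+1$ free integer parameters $a_0,\dots,a_N$. The engine of the induction is the difference rule $\Delta_\omega s^{[k+1,\omega]}=(k+1)\,\omega\,s^{[k,\omega]}$ just established, which says that in the basis $\{s^{[k,\omega]}/k!\}_{k\ge 0}$ the operator $\Delta_\omega$ acts as a shift $k+1\mapsto k$ with weight $\omega$; equivalently, the right inverse $\Delta_\omega^{-1}$ normalized to vanish at $s=0$ is given on this basis by $\Delta_\omega^{-1}\bigl(s^{[k,\omega]}/k!\bigr)=s^{[k+1,\omega]}/\bigl((k+1)!\,\omega\bigr)$. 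Iterating from $Q_0=a_0$ shows at once that $Q_N$ is a polynomial of degree $N$ in $s$ whose coefficient of each $a_{N-k}$ is obtained by applying $k$ antidifference operators to the constant $1$, producing a length-$k$ product of the $\omega_i$ in the denominator and a polynomial numerator of top term $s^k/k!$.

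The inductive step has two moves. First, assuming the stated expression for $Q_N$, which is written in the $\omega_N$-descending factorial basis, I re-expand each $s^{[k,\omega_N]}$ in the $\omega_{N+1}$-descending factorial basis via a connection formula $s^{[k,\omega]}=\sum_{j=0}^{k}c_{k,j}(\omega,\omega')\,s^{[j,\omega']}$ with universal connection coefficients $c_{k,j}$ and $c_{k,k}=1$. Second, I apply $\Delta_{\omega_{N+1}}^{-1}$ termwise using the shift property, and add the new free constant $a_{N+1}$. Collecting the coefficient of each $a_{N+1-k}$ and splitting off its leading piece $s^{[k,\omega_{N+1}]}/k!$ defines $A_{N+1,k}$, and yields a recurrence expressing $A_{N+1,k}$ from the $A_{N,k}$ and the $c_{k,j}$. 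Since all ingredients are universal polynomials in the parameters and $s$, so are the $A_{N,k}$.

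For the degree statement I would use a homogeneity argument rather than tracking monomials by hand. Assign weight $1$ to $s$ and to each $\omega_i$, and weight $0$ to each $a_j$; then the recurrence $\Delta_{\omega_{N+1}}Q_{N+1}=Q_N$ and the normalization $a_j=Q_j(0)$ are weight-homogeneous, so $Q_N$ has weight $0$, the coefficient of $a_{N-k}$ has weight $0$, and hence $A_{N,k}$ is homogeneous of weight exactly $k$. For such a homogeneous polynomial the requirement that its total degree in $\omega_1,\dots,\omega_N$ be strictly less than $k$ is equivalent to $A_{N,k}|_{s=0}=0$, i.e. that every monomial carry at least one power of $s$. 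This vanishing is immediate: the leading pieces $s^{[k,\omega_{N+1}]}/k!$ vanish at $s=0$ for $k\ge 1$, and the coefficient of $a_{N+1-k}$ vanishes at $s=0$ because it is built from antidifferences normalized to vanish there, forcing the remainder $A_{N+1,k}$ to vanish at $s=0$. Thus the degree bound propagates through the induction without a separate combinatorial estimate.

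Finally, the group structure falls out: the map $(a_0,\dots,a_N)\mapsto Q_N$ is additive, and with the $a_j$ ranging over $\ZZ$ it realizes the solution space as the image of $\ZZ^{N+1}$, identifying it with a subgroup of $\ZZ^{N+1}$. I expect the genuine obstacle to be the first move of the inductive step, namely the change of step in the descending factorials: because one antidifferences with $\omega_{N+1}$ while the inductive data are carried in the $\omega_N$-basis, the two bases do not coincide, and the work is to show that feeding the connection coefficients $c_{k,j}$ through $\Delta_{\omega_{N+1}}^{-1}$ preserves universality and the weight count. Once that is in place, the remaining items — existence of the antidifference, the degree bound via homogeneity, and the additive group statement — are routine bookkeeping.
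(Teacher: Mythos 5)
Your plan follows the paper's proof essentially verbatim: induction on $N$ via the normalized antidifference $\Delta_{\omega}^{-1}\bigl(s^{[k,\omega]}/k!\bigr)=s^{[k+1,\omega]}/\bigl((k+1)!\,\omega\bigr)$, a change of step in the descending-factorial basis, and a degree count by homogeneity in $(s,\omega_1,\ldots,\omega_N)$ --- the last point being exactly the paper's remark that the change of basis keeps the total degree of each monomial in $s$ and the $\omega_i$ constant. The one step you defer as ``the genuine obstacle,'' namely universality and degree control of the connection coefficients $c_{k,j}$, is precisely what the paper supplies by routing through the monomial basis and its explicit lemma $s^n=\sum_{k}B_{n,k}(\omega)\,s^{[\omega,k]}$ with $B_{n,n}=1$, $B_{n,k}\in\ZZ[X]$ and $\deg B_{n,k}\le n-k$.
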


From this Proposition it is clear that the space of solutions $Q_N$, and the one of $P_N$, is isomorphic to a quotient subgroup of $\ZZ^{N+1}$ 
by the kernel of the map $(a_0, a_1, \ldots , a_N) \in \ZZ^{N+1}\mapsto Q_N$. The proof 
of this Proposition follows by induction on $N\geq 1$, solving the difference equation
$$
\Delta_{\omega_{N+1}}  Q_{N+1} =Q_N \ .
$$
For this, we develop the polynomials 
$$
\frac{s^{[k,\o_N]}}{k!} + A_{N,k}(\o_1, \ldots , \o_N, s)
$$
in the bases $(s^k)$, then we change to the bases $(s^{[\o_{N+1},k]})$ using the following Lemma:
\begin{lemma}
For $n\geq 1$,
$$
s^n =\sum_{k=0}^n B_{n,k}(\o) s^{[\o,k]}
$$
where $B_{n,n}=1$, $B_{n,k}\in \ZZ[X]$ and $\deg B_{n,k} \leq n-k$.
\end{lemma}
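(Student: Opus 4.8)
The plan is to prove this by induction on $n$, viewing the identity as a change of basis from the monomials $(s^n)$ to the $\o$-descending factorials $(s^{[k,\o]})$, whose coefficients $B_{n,k}(\o)$ I expect to be the classical Stirling numbers of the second kind scaled by a power of $\o$. Concretely, the substitution $s=\o t$ turns $s^{[k,\o]}$ into $\o^k\, t^{[k,1]}=\o^k\, t(t-1)\cdots(t-k+1)$, so after dividing by $\o^n$ the claimed identity becomes the ordinary Stirling expansion $t^n=\sum_k S(n,k)\, t^{[k,1]}$; this already forces $B_{n,k}(\o)=S(n,k)\,\o^{n-k}$, a monomial in $\o$ with integer coefficient, of degree exactly $n-k$, and with $B_{n,n}=S(n,n)=1$. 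That computation simultaneously produces the candidate formula and verifies all three assertions, so I would use it as the conceptual skeleton.

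For a self-contained inductive proof I would first record the one-step multiplication rule. From the definition $s^{[k+1,\o]}=s^{[k,\o]}\,(s-k\o)$ one reads off
$$
s\cdot s^{[k,\o]}=s^{[k+1,\o]}+k\o\, s^{[k,\o]}\ .
$$
The base case $n=1$ is immediate since $s=s^{[1,\o]}$, giving $B_{1,1}=1$ and $B_{1,0}=0$. For the inductive step I multiply the expansion of $s^n$ by $s$, apply the rule above to each term, and reindex the shifted sum, which yields the recurrence
$$
B_{n+1,k}(\o)=B_{n,k-1}(\o)+k\o\, B_{n,k}(\o)\ .
$$

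From this recurrence all three claims propagate. Integrality is clear because $k$ is an integer and $\o$ carries integer coefficient $1$, so $B_{n+1,k}\in\ZZ[\o]$ whenever $B_{n,k-1},B_{n,k}\in\ZZ[\o]$. The degree bound follows since $\deg B_{n,k-1}\le n-(k-1)=(n+1)-k$, while $\deg(k\o\,B_{n,k})\le 1+(n-k)=(n+1)-k$. Finally $B_{n+1,n+1}=B_{n,n}+(n+1)\o\,B_{n,n+1}=1+0=1$, using $B_{n,n+1}=0$. I do not anticipate a genuine obstacle here: the only point requiring care is the degree bookkeeping in the term $k\o\,B_{n,k}$, where the extra factor of $\o$ exactly compensates the drop in the index, so the bound $\deg B_{n,k}\le n-k$ is tight rather than slack and must be tracked precisely through the recurrence rather than merely estimated.
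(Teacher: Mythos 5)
Your proof is correct, and it takes a recognizably different route from the paper's. The paper argues ``top--down'' by strong induction: it expands the product $s^{[\o,n]}=s(s-\o)\cdots(s-(n-1)\o)$ to write $s^n=s^{[\o,n]}-\sum_{k=1}^n b_k\,\o^k s^{n-k}$ with $b_k\in\ZZ$, and then invokes the induction hypothesis on each lower power $s^{n-k}$; no recurrence for the coefficients is ever written down. You instead go ``bottom--up'': multiply the expansion of $s^n$ by $s$, use the one-step rule $s\cdot s^{[k,\o]}=s^{[k+1,\o]}+k\o\,s^{[k,\o]}$, and extract the Pascal-type recurrence $B_{n+1,k}=B_{n,k-1}+k\o\,B_{n,k}$, from which integrality, the degree bound, and $B_{n,n}=1$ all propagate cleanly. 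Your approach buys strictly more: the explicit recurrence, and (via the rescaling $s=\o t$) the closed form $B_{n,k}(\o)=S(n,k)\,\o^{n-k}$ with $S(n,k)$ the Stirling numbers of the second kind, which shows the coefficients are in fact monomials. The paper's version is shorter and suffices for the degree bookkeeping it needs downstream (tracking total degree in $s$ and $\o$ through the change of basis in Proposition \ref{prop:gen_solution}), but yields less information. One cosmetic caveat: when $S(n,k)=0$ (e.g.\ $k=0$, $n\geq 1$) the degree is not ``exactly'' $n-k$ since $B_{n,k}=0$; the stated inequality $\deg B_{n,k}\leq n-k$ is what holds in general, and your recurrence argument proves exactly that.
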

\begin{proof}
We proceed by induction. The result is clear for $n=1$, and  developing $ s^{[\o,n]} =s(s-\o)\ldots (s-(n-1)\o)$ we get
$$
s^n=s^{[\o,n]} - \sum_{k=1}^n b_k \, \o^k s^{n-k}
$$
with $b_k\in \ZZ$, and the induction hypothesis proves the result.
\end{proof}
Observe than if in this change of bases  we keep track of the total degree of the monomials on the variables $s$ and $\omega$, it is constant. Hence, 
when we chage to the bases $(s^{[\o_{N+1},k]})$ we have the degree property of the polynomials $A_{N,k}$.

Now we can study uniqueness conditions. If we assume some algebraic independence condition on the parameters, 
we have a generalization by induction of the uniqueness result from Proposition \ref{prop:unique}.

\begin{theorem}
Under the same conditions as in Theorem \ref{thm:general2_multiple_complex}, and if we assume 
that for  $1\leq n\leq N$,
\begin{equation}\label{eq:condition}
[\QQ[\omega_1,\ldots, \omega_n]:\QQ[\omega_1,\ldots, \omega_{n-1}]]\geq n+1
\end{equation}
then the hierarchy $(\Gamma^{f}_n(s|\boldsymbol{\omega}_n))_{1\leq n\leq N}$ is uniquely determined by any value 
$\Gamma^{f}_N(r|\boldsymbol{\omega}_N) $ at some rational value $r\not=1$ and $r>0$.
\end{theorem}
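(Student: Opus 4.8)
\section*{Proof proposal}

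The plan is to show that two hierarchies sharing the value at $s=r$ and level $N$ must coincide at all levels $1\le n\le N$. Writing the second hierarchy as $\exp(2\pi i P_n)\,\Gamma^f_n$ with the $(P_n)$ of Theorem \ref{thm:general2_multiple_complex}, the first step is a reduction: it suffices to prove that $P_N$ is a constant integer. Indeed, reading the recurrence downward as $P_n=-\Delta_{\omega_{n+1}}P_{n+1}$, a constant $P_N$ forces $P_{N-1}=-\Delta_{\omega_N}P_N=0$ and then $P_{N-2}=\cdots=P_1=0$, so every factor $\exp(2\pi i P_n)$ is identically $1$ and the two hierarchies agree. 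Passing to $Q_N(s)=(-1)^N P_N(s-1)$ and invoking Proposition \ref{prop:gen_solution}, $P_N$ is constant exactly when the lower integer parameters vanish, $a_0=a_1=\cdots=a_{N-1}=0$, by the triangular dependence of the degree-$k$ part of $Q_N$ on $a_{N-k}$. Thus the whole statement reduces to showing that the one new scalar constraint coming from agreement at $r$ annihilates these $N$ integers.

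That constraint is $\exp(2\pi i P_N(r))=1$, i.e. $Q_N(\rho)\in\ZZ$ with $\rho=r+1\in\QQ$, $\rho\neq0$ (the excluded value $r=1$, i.e. $\rho=2$, is the normalization point and carries no information). I would substitute the explicit expression of Proposition \ref{prop:gen_solution} and clear denominators: multiplying the relation $Q_N(\rho)=m$, $m\in\ZZ$, by $\omega_1\cdots\omega_N$ turns it into an identity
$$
\Phi(\omega_1,\ldots,\omega_N)=m\,\omega_1\cdots\omega_N ,
$$
valid at the actual parameters, with $\Phi=\omega_1\cdots\omega_N\,Q_N(\rho)\in\QQ[\omega_1,\ldots,\omega_N]$. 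The crucial structural input is that each universal polynomial $A_{N,k}$ has total $\omega$-degree strictly below $k$, while $\rho^{[k,\omega_N]}$ (Definition \ref{def:omega-descending-factorial}) has $\omega_N$-degree $k-1$; together these bound the degree of $\Phi$ in $\omega_N$ by $N-1$, hence strictly below the extension degree $[\QQ[\omega_1,\ldots,\omega_N]:\QQ[\omega_1,\ldots,\omega_{N-1}]]\ge N+1$ given by hypothesis (\ref{eq:condition}). Viewing $\Phi-m\,\omega_1\cdots\omega_N$ as a polynomial in $\omega_N$ over the field $\QQ(\omega_1,\ldots,\omega_{N-1})$, a nonzero polynomial of degree below $[\omega_N:\QQ(\omega_1,\ldots,\omega_{N-1})]$ cannot vanish at $\omega_N$; therefore each of its $\omega_N$-coefficients must vanish at $(\omega_1,\ldots,\omega_{N-1})$.

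I would then run an induction on $N$, the base case $N=1$ being exactly Proposition \ref{prop:unique}, where (\ref{eq:condition}) reads $\omega_1\notin\QQ$. In the inductive step the lowest $\omega_N$-coefficient isolates $a_0$ against the constant $\rho^N/N!\neq0$, and the remaining coefficient equations, after dividing out $\rho$, are meant to feed a level-$(N-1)$ problem in $\omega_1,\ldots,\omega_{N-1}$ to which the inductive hypothesis (using (\ref{eq:condition}) for $1\le n\le N-1$) applies and kills $a_1,\ldots,a_{N-1}$ in turn. I expect the main obstacle to be precisely this descent: the correction polynomials $A_{N,k}$ couple several of the $a_i$ at one and the same $\omega_N$-degree, so the peeled equations are not literally the level-$(N-1)$ instance, and a naive single-variable peeling fails because the coefficient polynomials can acquire degree up to $N-1$ in a low variable $\omega_n$, where only independence up to degree $n$ is available. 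Making the bookkeeping work therefore requires interleaving the algebraic-independence tower with the recursive structure of the hierarchy---most naturally via the shift relation $\Gamma_{N-1}^{\Gamma_1^f(\cdot,\omega_1)}(\cdot|\boldsymbol{\omega}'_{N-1})=\Gamma_N^f(\cdot|\boldsymbol{\omega}_N)$---so that the total-degree bound on $A_{N,k}$ is matched to the successive extension degrees $n+1$ at every level; verifying this compatibility uniformly along the descent is the crux.
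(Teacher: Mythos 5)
Your first half coincides with the paper's argument: the reduction to showing that the ratio polynomials $P_n$ are trivial, the passage to $Q_N(r+1)\in\ZZ$, clearing denominators by $\omega_1\cdots\omega_N$, and the key point that the resulting relation has degree at most $N$ in $\omega_N$ while hypothesis (\ref{eq:condition}) forces $\omega_N$ to have degree at least $N+1$ over $\QQ[\omega_1,\ldots,\omega_{N-1}]$, so that every $\omega_N$-coefficient vanishes and $a_0=\tilde a_0$. (You read $a_0$ off the constant $\omega_N$-coefficient, the paper off the top one; in either case the nonvanishing of the rational factor $\rho^N/N!+A_{N,N}(\omega_1,\ldots,\omega_{N-1},0,\rho)$, resp.\ its top-degree analogue, is asserted rather than checked — a point the paper also passes over quickly.)

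The genuine gap is the descent, which you leave open and aim in the wrong direction. You propose to peel the remaining coefficient equations into ``a level-$(N-1)$ problem in $\omega_1,\ldots,\omega_{N-1}$'', i.e.\ to drop the \emph{last} parameter; as you yourself note, the $A_{N,k}$ couple the $a_i$ across degrees and this peeling does not reproduce a level-$(N-1)$ instance — nor should it, since you have no information about the value of $\Gamma^f_{N-1}(\cdot|\boldsymbol{\omega}_{N-1})$ at $r$. The paper instead drops the \emph{first} parameter. Once $a_0=\tilde a_0$, the level-one ratio is $\exp(2\pi i P_1)$ with $\Delta_{\omega_1}P_1=-P_0=0$, so $P_1$ is a constant integer and $g_1=\tilde g_1$. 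Re-indexing, $(g_n)_{1\leq n\leq N}$ and $(\tilde g_n)_{1\leq n\leq N}$ become two General Multiple Gamma Hierarchies of length $N-1$ over the common base function $g_1$ with parameters $(\omega_2,\ldots,\omega_N)$; their top level is the unchanged $g_N$, so they still agree at $s=r$, and the tower condition is inherited because $[\QQ[\omega_2,\ldots,\omega_{n+1}]:\QQ[\omega_2,\ldots,\omega_n]]\geq[\QQ[\omega_1,\ldots,\omega_{n+1}]:\QQ[\omega_1,\ldots,\omega_n]]\geq n+2$ (minimal polynomials over a larger field divide those over a smaller one). The induction hypothesis then applies verbatim and removes $a_1,\ldots,a_{N-1}$ one level at a time, each step being a fresh single-variable degree count in $\omega_N$ over a shrinking coefficient field. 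So the ``compatibility along the descent'' you identify as the crux reduces to this monotonicity of field degrees, and no interleaving of the independence tower with the $A_{N,k}$ bookkeeping is required.
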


\begin{proof}
If we have two solutions $(g_n(s|\boldsymbol{\omega}_n))_{1\leq n\leq N}$ and 
$(\tilde g_n(s|\boldsymbol{\omega}_n))_{1\leq n\leq N}$,
the equality, $g_N(r|\boldsymbol{\omega}_N)=\tilde g_N(r|\boldsymbol{\omega}_N)$, at the rational value $r \in \CC_+$, 
that is neither a zero nor pole of the functions that are LLD,
shows that the corresponding polynomials $Q_N$ and $\tilde Q_N$ satisfy
$$
Q_N(r+1)-\tilde Q_N(r+1) =a\in \ZZ
$$
Then, using Proposition \ref{prop:gen_solution}, this gives 
$$
\sum_{k=0}^N\frac{a_{N-k}-\tilde a_{N-k}}{\o_1 \o_2 \ldots \o_{k}} \left [ \frac{(r+1)^{[k,\o_N]}}{k!} + A_{N,k}(\o_1, \ldots , \o_N, r+1) \right ] =a
$$
or, multiplying by $\o_1 \o_2 \ldots \o_{N}$, we get the algebraic relation
$$
(a_0-\tilde a_0) \frac{(r+1)^{[N,\o_N]}}{N!} + \ldots + (a_N-\tilde a_N-a)\o_1 \o_2 \ldots \o_{N}=0
$$
where the dots part is a polynomial of degre $<N$ in $\o_N$ with coefficients in $\QQ[\omega_1,\ldots, \omega_{N-1}]$. 
Since $[\QQ[\omega_1,\ldots, \omega_N]:\QQ[\omega_1,\ldots, \omega_{N-1}]]\geq N+1$ we must have $a_0=\tilde a_0$. 
Therefore we have $g_1(s|\o_1)=\tilde g_1(s|\o_1)$.
Using the induction hypothesis on $N$ (replacing $f$ 
by $g_1(s|\o_1)=\tilde g_1(s|\o_1)$), we then get by induction  
$a_1 =\tilde a_1$,..., $a_N =\tilde a_N$.
\end{proof}

To conclude this section, we note that Ruijsenaars (2000, \cite{Rui2000}) exploited also the difference 
equations and their minimal solutions to prove numerous properties of  Barnes Multiple Gamma functions.
Shintani (1976, \cite{Shi1}) extended Barnes approach to Multiple Gamma functions to a several variable setting. 
Friedman and Ruijsenaars (2004, \cite{Fri-Rui2004}) extended Shintani's Multiple Gamma functions. We can also apply our 
functional equation approach
to define these several variables Gamma functions without Barnes-Hurwitz zeta functions and we will study 
these functions in a forthcoming article.

\bigskip

\textbf{Acknowledgements.} I am grateful to Prof. David Bl\'azquez for his comments and corrections.

\end{document}